\documentclass[12pt]{amsart}
\usepackage{amssymb}
\usepackage{amsmath, graphicx, rotating}
\usepackage{color}
\usepackage{soul}
\usepackage{todonotes}

\usepackage{dsfont}
\usepackage{mathrsfs}

\usepackage[T1]{fontenc}
\usepackage{lmodern}

\usepackage[english]{babel}

\usepackage{ upgreek }
\usepackage{amsmath,amssymb}
\usepackage{amsthm}
\usepackage{graphicx}
\usepackage{float}

\usepackage{ bbm }
\usepackage{ stmaryrd }
\usepackage{ mathrsfs }
\usepackage{frcursive}

\usepackage{pgf, tikz}

\setcounter{MaxMatrixCols}{10}

\definecolor{rouge}{rgb}{0.7,0.00,0.00}
\definecolor{vert}{rgb}{0.00,0.5,0.00}
\definecolor{bleu}{rgb}{0.00,0.00,0.8}

\hoffset-1.5truecm
\textheight21.6cm \textwidth 15cm
\newtheorem{theorem}{Theorem}[section]
\newtheorem{lemma}[theorem]{Lemma}

\newtheorem{corollary}[theorem]{Corollary}

\numberwithin{equation}{section}

\def\bb#1{\mathbb{#1}}

\def\geq{\geqslant}
\def\leq{\leqslant}

\def\geq{\geqslant}
\def\leq{\leqslant}

\def\l{\left(}
\def\r{\right)}
\def\p{\mathbb{P}}
\def\e{\mathbb{E}}

\def\1{\mathds{1}}

\def\({\left(}
\def\){\right)}

\def\l{\left(}
\def\r{\right)}
\def\p{\mathbb{P}}
\def\e{\mathbb{E}}

\begin{document}
\title[Cram\'er's large deviation for BPREs]{Berry-Esseen's bound and Cram\'er's large deviation expansion
for a supercritical branching process in a random environment}

\author{Ion~Grama}
\curraddr[Grama, I.]{ Universit\'{e} de Bretagne-Sud, LMBA, UMR CNRS 6205,
Vannes, France}
\email{ion.grama@univ-ubs.fr}

\author{Quansheng Liu}
\curraddr[Liu, Q.]{ Universit\'{e} de Bretagne-Sud, LMBA, UMR CNRS 6205,
Vannes, France}
\email{quansheng.liu@univ-ubs.fr}

\author{Eric Miqueu}
\curraddr[Miqueu, E.]{Universit\'{e} de Bretagne-Sud, LMBA, UMR CNRS 6205,
Vannes, France}
\email{eric.miqueu@univ-ubs.fr}
\date{\today }
\subjclass[2000]{ Primary 60F17, 60J05, 60J10. Secondary 37C30 }
\keywords{Branching processes, random environment, harmonic moments, Stein's method, Berry-Esseen bound, change of measure}

\begin{abstract}
Let $(Z_n)$ be a supercritical branching process in a random environment $\xi = (\xi_n)$. 
We establish a Berry-Esseen bound and a Cram\'er's type large deviation expansion
for $\log Z_n$ under the annealed law $\mathbb P$.
We also improve some earlier results  about the harmonic moments of the limit variable $W=lim_{n\to \infty} W_n$, where $W_n =Z_n/ \mathbb{E}_{\xi} Z_n$ 
is the normalized population size.
\end{abstract}

\maketitle

\section{Introduction and main results}

A branching process in a random environment (BPRE) is a natural and important generalisation
of the Galton-Watson process, where the reproduction law varies according to a random environment indexed by time.
It was introduced for the first time in Smith and Wilkinson \cite%
{smith} to modelize the growth of a population submitted to an environment.
For background concepts and basic results concerning a BPRE we refer to 
Athreya and Karlin \cite{athreya1971branching, athreya1971branching2}.
In the critical and subcritical regime the process goes out and the research interest is concentrated mostly on the survival probability
and conditional limit theorems for the branching process,
see e.g. Afanasyev, B\"oinghoff, Kersting and Vatutin \cite{afanasyev2012limit, afanasyev2014conditional}, Vatutin \cite{Va2010},  Vatutin and Zheng \cite{VaZheng2012}, and the references therein.
In the supercritical
case, a great deal of current research has been focused on large deviation principle, 
see Bansaye and Berestycki \cite{bansaye2009large}, B\"oinghoff and Kersting \cite{boinghoff2010upper}, Bansaye and B\"oinghoff \cite{bansaye2011upper, bansaye2013lower, bansaye2014small}, Huang and Liu \cite{liu}. 
In the particular case when the offspring distribution is geometric, precise asymptotics can be found in 
Kozlov \cite{kozlov2006large}, B\"oinghoff \cite{boinghoff2014limit}, Nakashima \cite{Nakashima2013lower}. 
In this article, we complete on these results by giving the Berry-Esseen bound 
and asymptotics of large deviations of Cram\'{e}r's type for a supercritical BPRE.

A BPRE can be described as follows.
 The random environment is
represented  by a sequence $\xi = (\xi_0, \xi_1 , ... ) $ of independent and
identically distributed random variables (i.i.d.\ r.v.'s); each realization of $\xi_n$ corresponds to a probability law $\{ p_i(\xi_n): i \in \mathbb N \}$ on 
$\mathbb N = \{0,1,2,\dots\},$
whose probability generating function is
\begin{equation}
\mathnormal{f}_{\xi_n} (s)= f_n (s) = \sum_{i=0}^{\infty} p_i ( \xi_n ) s^i,
\quad s \in [0,1], \quad p_i ( \xi_n ) \geq 0, \quad \sum_{i=0}^{ \infty}
p_i (\xi_n) =1.  \label{defin001}
\end{equation}
Define the process $(Z_n)_{n \geq 0}$ by the relations
\begin{equation}  \label{relation recurrence Zn}
Z_0 = 1, \quad Z_{n+1} = \sum_{i=1}^{Z_n} N_{n, i}, \quad \text{for} \quad n
\geq 0,
\end{equation}
where $N_{n,i} $ is the number of children of the $i$-th individual of the
generation $n$. Conditionally on the environment $\xi $, the r.v.'s $N_{n,i} $
(i = 1, 2, ...) are independent of each other with common probability generating function $\mathnormal{f}_n,$
and also independent of $Z_n$. 

In the sequel we denote by $\mathbb{P}_{\xi}$ the
\textit{quenched law}, i.e.\ the conditional probability when the
environment $\xi$ is given, and by $\tau $ the law of the environment $\xi$.
Then
$\mathbb{P}(dx,d\xi) = \mathbb{P}_{\xi}(dx) {\tau}(d\xi)$
is the total law of the process, called
\textit{annealed law}. The corresponding quenched and annealed expectations
are denoted respectively by $\mathbb{E}_{\xi}$ and $\mathbb{E}$. We also
define, for $n\geq 0$,
\begin{equation*}
m_n = m_n ( \xi )= \sum_{i=0}^\infty i p_i ( \xi_n ) \quad \text{and} \ \ \Pi_n = \mathbb{E}_{\xi} Z_n = m_0 ... m_{n-1},
\end{equation*}
where $m_n $ represents the average number of children of an
individual of generation $n$  when the environment $\xi $ is given. Let
\begin{equation} \label{Wn}
W_n =\frac{Z_n}{\Pi_n} , \quad n\geq 0,
\end{equation}
be
the normalized population size.
It is well known that under $%
\mathbb{P}_{\xi}$, $(W_n)_{n \geq 0} $ is a non-negative martingale with respect to the filtration
$$\mathcal{F}_n = \sigma
\left(\xi, N_{k,i} , 0 \leq k \leq n-1, i = 1,2 \ldots \right), $$
where by convention $\mathcal{F}_0 = \sigma(\xi)$.
Then the limit $W = \lim W_n $ exists $\mathbb{P}$ - a.s. and $\mathbb{E} W \leq 1 $.

An important tool in the study  of a BPRE  is the associated random
walk
\begin{equation*}
S_n = \log \Pi_n = \sum_{i=1}^{n} X_i , \quad n \geq 1, 
\end{equation*}
where the r.v.'s  $X_i = \log m_{i-1}$ $(i\geq1)$ are i.i.d.\  depending only on the environment $\xi$.
It turns out that the behavior  of the process $(Z_n)$ is mainly determined by the associated random walk which is seen from the decomposition
\begin{equation} \label{decom-logZn}
\log Z_{n}=S_{n}+\log W_{n}.
\end{equation}
For the sake of brevity set $X=  \log m_0 $,
\begin{equation*}
\mu = \mathbb{E} X \quad \text{and} \quad \sigma^2 = \mathbb{E} (X - \mu)^2.
\end{equation*}
We shall assume that the BPRE is supercritical, with $\mu \in (0, \infty );$
together with $\mathbb E | \log (1-p_0(\xi_0)) | < \infty$ this implies that 
 the population size tends to infinity with positive probability  (see \cite{athreya1971branching}).
We also assume that the random walk $(S_n)$ is non-degenerate with $0<\sigma^2<\infty$;
in particular this implies that
\begin{equation}
\label{EP_1}
\mathbb P(Z_1=1)= \mathbb E p_1(\xi_0) <1.
\end{equation}

Throughout the paper, we assume the following condition:
\begin{equation}
\mathbb{E} \frac{Z_1 \log^+ Z_1}{m_0}  < \infty , \label{A0}
\end{equation}
which implies that the martingale $W_n$  converges to $W$ in $L^1 (\mathbb{P})$ (see e.g. \cite{tanny1988necessary}) and
\[ \mathbb{P} (W>0) =  \mathbb{P} (Z_n \to \infty) = \lim_{n \to \infty} \mathbb{P} (Z_n >0) >0. \]
Furthermore, we assume in the sequel that each individual has at least one child,
which means that
\begin{equation}
p_0 = 0 \quad \mathbb{P} \mbox{ - a.s.}  \label{p0}
\end{equation}
In particular this implies that the associated random walk has positive increments, $Z_n \to \infty$ and $W>0$ $\mathbb{P}$ - a.s.
Throughout the paper, we denote by $C$ an absolute constant whose value may differ
from line to line.

Our first result is a Berry-Esseen type bound for $\log Z_n,$ which
holds under the following additional assumptions:
\vskip0.2cm \noindent
\textbf{A1.} 
There exists a constant $\varepsilon >0$ such that
\begin{equation}
\mathbb{E} X^{3+ \varepsilon} < \infty. \label{Moment3}
\end{equation}
\textbf{A2.} There exists a constant $p>1$ such that
\begin{equation}
\mathbb{E} \left( \frac{Z_1}{m_0} \right)^p < \infty. \label{H2 bis}
\end{equation}
\begin{theorem}
\label{BE theorem for BPRE} Under conditions \textbf{A1} and \textbf{A2}, we have
\begin{equation*}
\sup_{x \in \mathbb{R} } \left| \mathbb{P}\left( \frac{\log Z_n - n \mu}{%
\sigma \sqrt{n}} \leq x \right) - \Phi (x) \right| \leq \frac{C}{\sqrt{n}},
\end{equation*}
where $ \displaystyle{\ \Phi (x) = \frac{1}{\sqrt{2 \pi}} \int_{- \infty}^{x} e^{-
t^2 / 2 } dt }$ is the standard normal distribution function.
\end{theorem}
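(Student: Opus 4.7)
The decomposition \eqref{decom-logZn} reduces the theorem to a Berry--Esseen bound for the associated random walk $S_n$ together with a control on the perturbation $\log W_n/(\sigma\sqrt n)$. Since the increments $X_i = \log m_{i-1}$ are i.i.d.\ with mean $\mu$, variance $\sigma^2\in(0,\infty)$, and finite $(3+\varepsilon)$-th moment by \textbf{A1}, the classical Berry--Esseen theorem (derivable by Stein's method, as suggested by the paper's keywords) yields
\[
\sup_{x\in\mathbb R}\left|\mathbb P\!\left(\frac{S_n-n\mu}{\sigma\sqrt n}\le x\right)-\Phi(x)\right|\le \frac{C}{\sqrt n}.
\]
So it suffices to show that adding $\log W_n$ perturbs this CDF by at most $O(1/\sqrt n)$, uniformly in $x$.

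To handle the perturbation I would use a two-sided truncation argument: for every $\delta>0$,
\[
\mathbb P\!\left(\log Z_n\le n\mu+x\sigma\sqrt n\right)\le \mathbb P\!\left(S_n\le n\mu+(x+\delta)\sigma\sqrt n\right)+\mathbb P\!\left(\log W_n<-\delta\sigma\sqrt n\right),
\]
together with a matching lower bound. The first probability is within $C/\sqrt n$ of $\Phi(x+\delta)$ by the preceding step, and $|\Phi(x+\delta)-\Phi(x)|\le \delta/\sqrt{2\pi}$. Thus the crux is to estimate the tails of $\log W_n$. The upper tail is handled via Markov's inequality and the uniform bound $\mathbb E W_n^p\le C$ coming from \textbf{A2}, which gives $\mathbb P(\log W_n>t)\le Ce^{-pt}$. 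For the lower tail I would invoke a harmonic-moment estimate $\mathbb E W^{-a}<\infty$ for some $a>0$, an ingredient the paper refines, leading to $\mathbb P(\log W_n<-t)\le Ce^{-at}$. With exponential tails on both sides, choosing $\delta$ of order $\log n/\sqrt n$ balances the two error contributions.

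To eliminate the logarithmic loss and recover the sharp $C/\sqrt n$ rate, I would replace the truncation by Esseen's smoothing inequality applied to the joint characteristic function
\[
\hat F_n(t)=\mathbb E\!\left[e^{it(S_n-n\mu)/(\sigma\sqrt n)}\,\mathbb E_\xi W_n^{it/(\sigma\sqrt n)}\right],
\]
using the first-order Taylor expansion of $\mathbb E_\xi W_n^{i\tau}$ around $\tau=0$ together with the uniform moments of $\log W_n$ supplied by the two-sided tail bounds. The main obstacle is precisely the sharp harmonic-moment bound $\mathbb E W^{-a}<\infty$, without which the lower tail of $\log W_n$ is out of reach; its derivation, which relies on \eqref{EP_1} (namely $\mathbb P(Z_1=1)<1$) and an inductive analysis along the branching tree, is the technical contribution of the paper that feeds into this theorem.
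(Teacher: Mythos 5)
Your proposal hinges on a lower-tail estimate of the form $\mathbb{P}(\log W_n < -t) \leq C e^{-at}$, which you would derive from a harmonic moment $\mathbb{E} W^{-a} < \infty$. This is not available under the stated hypotheses: Theorem \ref{BE theorem for BPRE} assumes only \textbf{A1} and \textbf{A2}, whereas the harmonic moment result (Theorem \ref{harmonic moment}) requires the exponential moment condition \textbf{A3} on $X = \log m_0$, which is strictly stronger than the $(3+\varepsilon)$-th moment in \textbf{A1}. The paper deliberately avoids harmonic moments in the Berry--Esseen proof. Instead, Lemma \ref{moment log W} shows that $\mathbb{E}|\log m_0|^{2p} < \infty$ already gives $\phi(t) = \mathbb{E}e^{-tW} = O((\log t)^{-p})$, hence only $\mathbb{E}|\log W|^q < \infty$ for $q < p$ (polynomial, not exponential, left tail). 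With $p = (3+\varepsilon)/2$ from \textbf{A1}, this yields $\sup_n \mathbb{E}|\log W_n|^{3/2} < \infty$, which is all the paper uses. Your truncation with the two tails would then decay only polynomially in $\delta\sqrt n$, forcing $\delta$ of order $n^{-1/6}$ rather than $\log n/\sqrt n$, i.e.\ an even worse final rate.

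The second issue is that even with exponential tails, the two-sided truncation yields $O(\log n/\sqrt n)$, and your proposed repair via Esseen's smoothing inequality on $\hat F_n(t) = \mathbb{E}[e^{itY_n}\,\mathbb{E}_\xi W_n^{it/(\sigma\sqrt n)}]$ is only sketched: $Y_n$ and $\mathbb{E}_\xi W_n^{i\tau}$ both depend on the whole environment, so one must control the cross-correlation of the Taylor remainder with $e^{itY_n}$ over the full Esseen range $|t|\lesssim\sqrt n$, which is not addressed. The paper's actual mechanism is different: it applies Stein's method directly to $\tilde Y_n = Y_n + V_n$, and the sharp $O(1/\sqrt n)$ rate comes from the concentration inequality of Lemma \ref{concentration lemma 1}. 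That lemma replaces $\log W_n$ by $\log W_m$ with $m=\lfloor\sqrt n\rfloor$ (paying a negligible $\delta^m$ price by Lemma \ref{lemma logW cv exp}), and then exploits the independence of $Y_{m,n}$ from $(Y_m, V_m)$ to condition and invoke the classical Berry--Esseen bound; the extra terms contribute $\mathbb{E}|\log W_m|/(\sigma\sqrt n) \leq C/\sqrt n$, needing only a first absolute moment of $\log W_m$, not an exponential tail. This blocking-plus-conditioning structure is the missing idea in your proposal.
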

Theorem \ref{BE theorem for BPRE} completes the results of \cite{liu} by giving the rate of convergence in the central limit theorem for $\log Z_n$. The proof of this theorem is based on  Stein's method and is deferred to
Section \ref{section stein}.

Our next result concerns the asymptotic behavior of the left-tail of the r.v.\ $W$. 
For the Galton-Watson process  this problem is well studied, 
see e.g. \cite{fleischmann} and the references therein. 
For a BPRE, some interesting  results have been obtained in
 \cite{hambly} and \cite{liu}. In particular, for the annealed law,
Huang and Liu (\cite{liu}, Theorem 1.4) have found a necessary and sufficient condition
for the existence of harmonic moments of $W$, under the following
hypothesis:
\begin{equation*}
(H) \quad \exists \ \delta > 0 \ \text{and} \ A > A_1 >1 \ \text{such that}
\ \ A_1 \leq m_0 \ \ \text{and} \ \sum_{i=1}^\infty i^{1+ \delta} p_i (\xi_0)  \leq A^{1+ \delta} \ a.s.
\end{equation*}
However, this hypothesis is very restrictive; it implies in particular that $1<A_1\leq m_0\leq A$. 
We will show (see Theorem \ref{harmonic moment} below)  the existence of harmonic moments
under the following significantly less restrictive assumption:
\vskip0.2cm \noindent \textbf{A3.}
The r.v.\ $X = \log m_0$ has an exponential moment, i.e. there exists a constant $\lambda_0 >0$ such
that
\begin{equation}
\mathbb{E} e^{ \lambda_0 X } = \mathbb{E}m_0^{\lambda_0} <\infty.
\label{MomentExp}
\end{equation}
Under this hypothesis,  since $X$ is a positive random variable, the function $ \lambda \mapsto \mathbb{E}e^{\lambda X}$ is finite for all $\lambda \in (-\infty, \lambda_0]$ and is increasing.
\begin{theorem}
\label{harmonic moment} Assume condition \textbf{A3.}  Let
\begin{equation}
a_0 = \left\{
\begin{array}{cl}
 \frac{ \lambda_0}{1-{ \log \mathbb{E} m_0^{\lambda_0}  } / {\log \mathbb{E} p_1}   } & \text{if} \quad \mathbb P ( p_1>0 )  >0 , \\
\lambda_0 & \text{otherwise}. \\
\end{array}%
\right.
\label{Def a0}
\end{equation}%
Then, for all $a \in (0,a_0)$,
\begin{equation*}
\mathbb{E} W^{- a } < \infty .
\end{equation*}
\end{theorem}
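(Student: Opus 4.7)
The plan is to control the annealed Laplace transform $\phi(t) := \mathbb{E} e^{-tW}$ as $t \to \infty$, and then deduce the harmonic moments via the Fubini identity
\[
\mathbb{E} W^{-a} = \frac{1}{\Gamma(a)} \int_0^\infty t^{a-1}\phi(t)\, dt,
\]
valid since $W > 0$ $\mathbb{P}$-a.s.\ by \eqref{p0}. It thus suffices to prove that for every $a < a_0$ there exists $\alpha > a$ with $\phi(t) = O(t^{-\alpha})$.

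First I would set $g(\xi, t) := \mathbb{E}_\xi e^{-tW}$ and exploit the branching decomposition $W = m_0^{-1}\sum_{i=1}^{Z_1} W^{(i)}$, where the $W^{(i)}$ are conditionally i.i.d.\ copies of $W$ under the shifted environment $T\xi = (\xi_1, \xi_2, \dots)$. Iterating this identity $n$ times along the genealogy yields the exact relation
\[
g(\xi, t) = \mathbb{E}_\xi\bigl[ g(T^n \xi,\, t/\Pi_n)^{Z_n} \bigr].
\]
Splitting on $\{Z_n = 1\}$, whose quenched probability equals $\prod_{k=0}^{n-1} p_1(\xi_k)$ because $p_0 = 0$ forces $Z_n \geq 1$, and using $s^k \leq s^2$ for $k \geq 2$, $s \in [0,1]$, one gets the $n$-step inequality
\[
g(\xi, t) \leq \prod_{k=0}^{n-1} p_1(\xi_k)\, g(T^n\xi, t/\Pi_n) + \Bigl(1 - \prod_{k=0}^{n-1} p_1(\xi_k)\Bigr) g(T^n\xi, t/\Pi_n)^2.
\]
The linear part captures the slow-decay realizations along ancestral lines where each individual produces a single child (on which $W \sim W^{(1)}/\Pi_n$), while the quadratic remainder is expected to be negligible for large $t$.

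The core estimate is a Chernoff / exponential-Markov device applied to the leading linear term. Taking annealed expectation, using the independence of $T^n\xi$ from $\mathcal{F}_n$, and the bound $\mathbf{1}_{\Pi_n \geq t} \leq t^{-\lambda} \Pi_n^\lambda$, the i.i.d.\ character of the $\xi_k$ yields
\[
\mathbb{E}\Bigl[ \prod_{k=0}^{n-1} p_1(\xi_k)\, ;\, \Pi_n \geq t\Bigr] \leq t^{-\lambda}\, \rho(\lambda)^n, \qquad \rho(\lambda) := \mathbb{E}\bigl[ p_1(\xi_0)\, m_0^{\lambda}\bigr],
\]
and a geometric summation gives $\phi(t) = O(t^{-\lambda})$ as soon as $\rho(\lambda) < 1$. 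To reach all $\lambda < a_0$, apply H\"older with conjugate exponents $\lambda_0/\lambda$ and $\lambda_0/(\lambda_0 - \lambda)$, together with the bound $p_1^{\lambda_0/(\lambda_0-\lambda)} \leq p_1$ (valid since $p_1 \in [0,1]$ and the exponent is $\geq 1$):
\[
\rho(\lambda) \leq (\mathbb{E} p_1)^{(\lambda_0 - \lambda)/\lambda_0}\, (\mathbb{E} m_0^{\lambda_0})^{\lambda / \lambda_0}.
\]
Writing $\ell = \log \mathbb{E} p_1 < 0$ (from \eqref{EP_1}) and $L = \log \mathbb{E} m_0^{\lambda_0} \geq 0$ (from \textbf{A3} and $m_0 \geq 1$), a direct computation shows that the right-hand side is strictly less than $1$ iff $\lambda < \lambda_0 |\ell|/(L + |\ell|)$, which is exactly $a_0$ as in \eqref{Def a0}. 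In the degenerate case $\mathbb{P}(p_1 > 0) = 0$, the linear term vanishes, the iteration becomes purely quadratic ($f_0(s) \leq s^2$), the decay of $\phi$ is double-exponential, and the threshold $a_0 = \lambda_0$ is imposed only by the integrability of $\phi$ in the Fubini identity, again controlled by \textbf{A3}.

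The main obstacle is the careful bookkeeping of the quadratic remainder in the $n$-step inequality: one must ensure that the cumulative contribution of the $(1 - \prod p_1)\, g^2$ terms, summed over all levels of the iteration, does not destroy the $t^{-\lambda}$ rate obtained from the linear chain. This is typically addressed by a bootstrap in which a provisional decay of $\phi$ (combined with the integrability of $m_0^{\lambda_0}$ from \textbf{A3}) feeds into an improved bound on $\mathbb{E}\, g^2$ under the shifted environment. In this step \textbf{A3} is used in its full strength, both to make the Chernoff argument work and to control the overshoot $S_n - \log t$ of the associated random walk.
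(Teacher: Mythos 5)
Your plan has the right skeleton, and it is essentially the skeleton of the paper's proof: control $\phi(t)=\mathbb{E}e^{-tW}$ through the iterated functional equation $\phi_\xi(t)=f_0\bigl(\phi_{T\xi}(t/m_0)\bigr)$, separate the single-child ancestral lines (weight $\prod_{k<n}p_1(\xi_k)$, using $p_0=0$) from the branching remainder via $s^k\leq s^2$, control $\Pi_n$ by Chernoff and \textbf{A3}, and conclude with $\mathbb{E}W^{-a}=\Gamma(a)^{-1}\int_0^\infty t^{a-1}\phi(t)\,dt$. Your H\"older computation showing that $\mathbb{E}[p_1m_0^{\lambda}]<1$ exactly for $\lambda<a_0$ is correct and is a nice explicit identification of the threshold (the paper reaches $a_0$ by optimizing $A$ and letting an auxiliary constant $\gamma\to 0$). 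But the two decisive steps are missing. First, for the linear term you only bound $\mathbb{E}\bigl[\prod_{k<n}p_1(\xi_k);\Pi_n\geq t\bigr]$; on the complementary event $\{\Pi_n<t\}$ the factor $g(T^n\xi,t/\Pi_n)$ is not small for free — its smallness for large arguments is precisely what is being proved — and the ``geometric summation'' is never specified (summed over which index, and why of order $t^{-\lambda}$?). Second, the quadratic remainder is explicitly deferred to an unspecified ``bootstrap'', and this is not a routine detail: in your split the term $\mathbb{E}\bigl[(1-\prod_{k<n}p_1(\xi_k))\,g(T^n\xi,t/\Pi_n)^2\bigr]$ carries no damping from the single-child weights, and a provisional annealed bound $\phi(s)\leq Cs^{-b}$ does not control it, because $\mathbb{E}[\phi_{T^n\xi}(s)^2]\geq \phi(s)^2$ by Jensen — the inequality goes the wrong way for feeding an annealed estimate back into a quenched square.

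The paper closes both gaps at once by \emph{not} separating the linear and quadratic parts before taking expectations. It keeps the product form $\phi_\xi(t)\leq \phi_{T^n\xi}(t/\Pi_n)\prod_{j<n}\bigl(p_1(\xi_j)+(1-p_1(\xi_j))\phi_{T^n\xi}(t/\Pi_n)\bigr)$ (which is tighter than your chord bound $P_n g+(1-P_n)g^2$, since the product of affine factors is convex in $g$), bounds the outer factor by $1$, truncates on $\{\Pi_n\leq A^n\}$ — the complement costing $(\mathbb{E}m_0^{\lambda_0}A^{-\lambda_0})^{n}$ by \textbf{A3} and Markov — and ties $n$ to $t$ by $n_0(t)\approx \log(t/K)/\log A$, so that on the truncation event $t/\Pi_n\geq K$ and the only input needed about $\phi$ is the qualitative fact that $\phi(s)\leq\gamma<1$ for $s\geq K$ (dominated convergence), not a power decay. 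Each generation then contributes a factor $\alpha=\mathbb{E}p_1+(1-\mathbb{E}p_1)\gamma<1$, giving $\phi(t)\leq 2\alpha^{n_0(t)}=O(t^{-a})$ with $a=-\log\alpha/\log A\uparrow a_0$ as $\gamma\downarrow 0$ for the optimal $A=(\mathbb{E}m_0^{\lambda_0}/\alpha)^{1/\lambda_0}$; no bootstrap and no circularity. To make your version work you would need either to adopt this product/truncation scheme with $n=n_0(t)$, or to actually construct the induction on the decay exponent that you only allude to; as written, the argument does not go through.
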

Yet, a necessary and sufficient condition for the existence of harmonic moments of order $a > 0$ under condition \textbf{A3} is still an open question.

The previous theorem
allows us to obtain a Cram\'er type large deviation expansion for a BPRE.
To state the corresponding result we need more notations. Let $L$ and $\psi$ be respectively the
moment and cumulant generating function of the random variable $X $:
\begin{equation}
L ( \lambda ) =
\mathbb{E} e^{\lambda X } = \mathbb{E} \left( m_0^{\lambda} \right) ,
\end{equation}
\begin{equation} \label{def-phi}
\psi (\lambda) = \log L (\lambda).
\end{equation}
Then $\psi$ is analytical for $\lambda \leq \lambda_0$ and we have $\psi (\lambda) = \sum_{k=1}^{\infty} \frac{\gamma_k%
}{k !} \lambda^k, $ where $\gamma_k = \frac{d^{k} \psi }{d \lambda^k} (0) $ is the
cumulant of order $k$ of the random variable $X$. In particular for $%
k =1,2$, we have $\gamma_1 =  \mu $ and $\gamma_2 = \sigma^2 $.
We shall use the Cram\'er's series of the associated random walk $(S_n)_{n
\geq 0}$ defined by
\begin{equation}  \label{Cram series}
\mathscr L (t) = \frac{\gamma_3}{6 \gamma_2^{3/2}} + \frac{\gamma_4 \gamma_2
- 3 \gamma_3^2}{24 \gamma_2^3} t + \frac{\gamma_5 \gamma_2^2- 10 \gamma_4
\gamma_3 \gamma_2 + 15 \gamma_3^3}{120 \gamma_2^{9/2}} t^2 + \ldots
\end{equation}
(see Petrov \cite{petrov}) which converges for $|t|$ small enough. 

Consider the following assumption:
\vskip0.2cm \noindent \textbf{A4.}
There exists a constant $p>1$ such that
\begin{equation}
\mathbb{E}  \frac{Z_1^{p}}{m_0}  < \infty.
\end{equation}
\vskip0.2cm
Note that under \eqref{p0} condition \textbf{A4} implies  \textbf{A2}. The intuitive meaning of these conditions is that the process $(Z_n)$ cannot deviate too much from its mean $\Pi_n$.

The following theorem 
gives a Cram\'{e}r's type large deviation expansion of a BPRE.
\begin{theorem}
\label{cramer type theorem}
Assume conditions \textbf{A3} and \textbf{A4.} Then, for $0 \leq x = o(\sqrt{n})$, we have, as $n \to \infty,$
\begin{equation}
\frac{\mathbb{P} \left( \frac{\log Z_n- n \mu}{\sigma \sqrt{n}}> x \right)}{%
1- \Phi (x)} = \exp \left\{ \frac{x^3}{\sqrt{n}} \ \mathscr L \left( \frac{x}{\sqrt{n}} \right) \right\} \left[ 1+O \left( \frac{1+x}{\sqrt{n}}
\right) \right]
\end{equation}
and
\begin{equation}
\frac{\mathbb{P} \left( \frac{\log Z_n - n \mu}{\sigma \sqrt{n}}< - x \right)%
}{\Phi (-x)} = \exp \left\{- \frac{x^3}{\sqrt{n}} \ \mathscr L \left(- \frac{x}{\sqrt{n}} \right) \right\} \left[ 1+O \left( \frac{1+x}{\sqrt{n}}
\right) \right].
\end{equation}
\end{theorem}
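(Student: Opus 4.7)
\medskip
\noindent\textbf{Proof plan.}
The strategy is to adapt the classical Cram\'er large deviation expansion for sums of i.i.d.\ random variables (see \cite{petrov}) to the BPRE setting by exploiting the decomposition $\log Z_n = S_n + \log W_n$ from \rf{decom-logZn} and treating $\log W_n$ as a controlled perturbation of the random-walk part $S_n$. I describe only the upper-tail expansion; the lower-tail statement is obtained symmetrically by tilting with $\lambda<0$.

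First I would perform an Esscher transform on the associated random walk. For $\lambda\in[0,\lambda_0)$, define the tilted measure $\mathbb{P}_\lambda$ on $\mathcal F_n$ by $d\mathbb{P}_\lambda/d\mathbb{P}=e^{\lambda S_n}/L(\lambda)^n$. Since $S_n$ depends only on the environment $\xi$, this tilts the law of $\xi$ alone: under $\mathbb{P}_\lambda$ the variables $(\xi_i)$ remain i.i.d.\ with a new distribution, $(Z_n)$ is still a supercritical BPRE with $\mathbb{E}_\lambda X=\psi'(\lambda)$ and $\Var_\lambda X=\psi''(\lambda)$, and conditions \textbf{A3} and \textbf{A4} are preserved uniformly on any compact subinterval of $[0,\lambda_0)$. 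In particular, Theorem \ref{harmonic moment} applied under $\mathbb{P}_\lambda$ yields $\mathbb{E}_\lambda W^{-a}<\infty$ uniformly in small $\lambda$. Given $0\leq x=o(\sqrt n)$, I would choose $\lambda=\lambda_n(x)$ so that $\psi'(\lambda)=\mu+x\sigma/\sqrt n$; then $\lambda=\frac{x}{\sigma\sqrt n}(1+o(1))$.

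Setting $\bar S_n=S_n-n\psi'(\lambda)$ and applying the change of measure yields
\begin{equation*}
\mathbb{P}\bigl(\log Z_n>n\mu+x\sigma\sqrt n\bigr)=e^{-n[\lambda\psi'(\lambda)-\psi(\lambda)]}\,\mathbb{E}_\lambda\!\left[e^{-\lambda\bar S_n}\mathbf{1}_{\{\bar S_n>-\log W_n\}}\right].
\end{equation*}
A Taylor expansion of the exponent in powers of $x/\sqrt n$, using the cumulants $\gamma_k$, produces exactly the factor $\exp(-x^2/2+(x^3/\sqrt n)\mathscr L(x/\sqrt n))$ coming from the Cram\'er series \rf{Cram series}. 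The remaining expectation is handled in two pieces. Ignoring $\log W_n$, an Edgeworth/Berry-Esseen expansion for $\bar S_n/(\sigma_\lambda\sqrt n)$ under $\mathbb{P}_\lambda$ (with $\sigma_\lambda=\sqrt{\psi''(\lambda)}$; only a uniform third-moment bound is required, automatic from \textbf{A3}) gives the standard Laplace-type evaluation
\begin{equation*}
\mathbb{E}_\lambda\!\left[e^{-\lambda\bar S_n}\mathbf{1}_{\{\bar S_n>0\}}\right]=\frac{1}{\lambda\sigma_\lambda\sqrt{2\pi n}}\bigl(1+O(1/\sqrt n)\bigr).
\end{equation*}
The perturbation $\mathbb{E}_\lambda[e^{-\lambda\bar S_n}(\mathbf{1}_{\{\bar S_n>-\log W_n\}}-\mathbf{1}_{\{\bar S_n>0\}})]$ is then controlled by splitting on the event $\{|\log W_n|\leq A\log n\}$: on this event the smoothness of the Edgeworth expansion yields a multiplicative factor $1+O((1+x)/\sqrt n)$; on its complement, Markov's inequality with $\mathbb{E}_\lambda W_n^{p}<\infty$ (from \textbf{A4}) for the upper tail, and $\mathbb{E}_\lambda W_n^{-a}<\infty$ (from Theorem \ref{harmonic moment}) for the lower tail, contributes negligibly. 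Combining these pieces and noting that $\lambda\sigma_\lambda\sqrt n\to x$ as $n\to\infty$, the prefactor becomes $\sim(x\sqrt{2\pi})^{-1}e^{-x^2/2}\,e^{(x^3/\sqrt n)\mathscr L(x/\sqrt n)}$, and dividing by $1-\Phi(x)\sim\varphi(x)/x$ produces the claimed expansion.

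The principal obstacle is the quantitative control of $\log W_n$ under $\mathbb{P}_\lambda$, uniform in $n$ and in $\lambda$ in a neighborhood of $0$, with an error matching $O((1+x)/\sqrt n)$. Theorem \ref{harmonic moment} only gives finiteness of $\mathbb{E} W^{-a}$ for the limit $W$, so one must revisit its proof to transfer the bound to the pre-limits $W_n$ with constants that are stable under small perturbations of the environment law (equivalently, under the tilt $\lambda\to 0$). A secondary technical point is a suitably uniform Edgeworth/Berry-Esseen bound for $X$ under $\mathbb{P}_\lambda$, which is routine since \textbf{A3} provides uniform control of all cumulants on compact subsets of $[0,\lambda_0)$.
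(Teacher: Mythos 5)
Your plan follows essentially the same route as the paper: the same Cram\'er tilting $\mathbb{P}_\lambda$ of the environment law, the same choice of $\lambda(x)$ solving $\psi'(\lambda)=\mu+x\sigma/\sqrt n$, and the same treatment of $\log W_n$ as a perturbation of $S_n$ whose positive and harmonic moments must be controlled uniformly in $\lambda$ near $0$. The obstacle you correctly single out --- uniform-in-$\lambda$ harmonic moments of $W$ and $W_n$ under $\mathbb{P}_\lambda$ --- is precisely what the paper supplies in Theorem \ref{moment harmonique W uniforme} and Lemmas \ref{lemma Wexp bis} and \ref{ecart iid}, after which it evaluates the tilted expectation via a Fubini representation of $e^{-\lambda\sigma_\lambda\sqrt n\,Y_n^\lambda}$ and a joint concentration inequality rather than your Edgeworth-plus-truncation argument; this is only a minor technical variant.
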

As a consequence of this result we obtain a large deviation approximation by the normal law in the  normal zone $x=o(n^{1/6}):$ 
\begin{corollary}
\label{theorem normal zonz} Under the assumptions of Theorem \ref{cramer type theorem},
 we have for $0 \leq x = o(n^{1/6}) $, as $n \to \infty,$
\begin{equation}
\frac{\mathbb{P} \left( \frac{\log Z_n- n \mu}{\sigma \sqrt{n}}> x \right)}{1- \Phi (x)} = 1 + O \left( \frac{x^3}{\sqrt{n}} \right)
\end{equation}
and
\begin{equation}
\frac{\mathbb{P} \left( \frac{\log Z_n - n \mu}{\sigma \sqrt{n}}< - x \right)%
}{\Phi (-x)} = 1 + O \left( \frac{x^3}{\sqrt{n}} \right) .
\end{equation}
\end{corollary}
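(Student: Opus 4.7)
The plan is to derive this corollary from Theorem~\ref{cramer type theorem} by a Taylor expansion of the exponential factor, exploiting that the regime $x=o(n^{1/6})$ forces the quantity inside the exponent to be small. First I would record that the Cram\'er series $\mathscr L$ defined in (\ref{Cram series}) converges in some neighbourhood $[-\delta,\delta]$ of $0$, hence is bounded there by an absolute constant $M$. For $0\le x = o(n^{1/6})$ the argument $x/\sqrt n = o(n^{-1/3})$ lies in $[-\delta,\delta]$ for all $n$ sufficiently large, so $\mathscr L(x/\sqrt n)=O(1)$ uniformly in the admissible range.

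Next I set $y_n := (x^3/\sqrt n)\,\mathscr L(x/\sqrt n)$ and note that $|y_n|\le M x^3/\sqrt n = o(1)$ uniformly in $x$. The elementary bound $e^{y}=1+O(y)$ for bounded $y$ then yields $e^{y_n}=1+O(x^3/\sqrt n)$. Multiplying by the bracketed correction factor of Theorem~\ref{cramer type theorem} and distributing,
\[
\exp(y_n)\bigl[1+O((1+x)/\sqrt n)\bigr]=1+O\!\left(\frac{x^3}{\sqrt n}\right)+O\!\left(\frac{1+x}{\sqrt n}\right).
\]
Within the range $0\le x=o(n^{1/6})$ the second error is absorbed into the first: for $x\ge 1$ one has $1+x\le 2x^3$, while for $0\le x\le 1$ the whole error is $O(1/\sqrt n)$, consistent with the stated bound. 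This yields the asymptotic for the upper tail; the lower-tail statement follows by an identical argument applied to the second expansion of Theorem~\ref{cramer type theorem}.

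Since the derivation is an elementary Taylor expansion on top of Theorem~\ref{cramer type theorem}, there is no real obstacle to overcome here. The only point that requires a bit of attention is the uniform control of $\mathscr L(x/\sqrt n)$, which is ensured by the growth restriction $x=o(n^{1/6})$ keeping $x/\sqrt n$ inside the interval of convergence of the Cram\'er series.
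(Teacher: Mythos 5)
Your proof is correct and is exactly the argument the paper intends: the corollary is stated as an immediate consequence of Theorem \ref{cramer type theorem} with no separate proof, and the route is precisely yours --- Taylor-expand the exponential factor using that $x^3/\sqrt n\,\mathscr L(x/\sqrt n)=o(1)$ in the zone $x=o(n^{1/6})$, then absorb the $O((1+x)/\sqrt n)$ correction into $O(x^3/\sqrt n)$ for $x\geq 1$. The only caveat --- that for $x$ near $0$ the combined error is really $O((1+x^3)/\sqrt n)$ rather than $O(x^3/\sqrt n)$ --- is an imprecision already present in the paper's own statement, not a gap in your argument.
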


Note that Theorem \ref{cramer type theorem}  is more precise than the moderate deviation principle established in \cite{liu},  and, moreover, 
is stated under weaker assumptions.
Indeed, let $a_n$ be a sequence of positive numbers satisfying $\frac{a_n}{n} \to 0$ and $\frac{a_n}{\sqrt{n}} \to \infty$. Then by
Theorem 1.6 of \cite{liu},  under hypothesis $(H)$, we have,
for  $x_n = \frac{x a_n}{\sigma \sqrt{n}}$ with fixed $x \in \mathbb{R}$,
\begin{equation}
\label{MDP}
  \log \bb{P} \l \frac{\log Z_n - n \mu }{a_n} > x \r \sim   - \frac{x_n^2}{2}.
\end{equation}
Using the weaker condition \textbf{A3} (instead of condition $(H)$) 
Theorem \ref{cramer type theorem} implies that 
\begin{equation}
\label{CT}
\bb{P} \l \frac{\log Z_n - n \mu }{\sigma \sqrt{n}} > x_n \r =  \l 1 - \Phi (x_n) \r \exp \l \frac{x_n^3}{\sqrt{n}} \mathscr L  \l \frac{x_n}{\sqrt{n}} \r \r \l 1 + O \l \frac{1+x_n}{\sqrt{n}} \r \r ,
\end{equation}
which sharpens  (\ref{MDP}) without the log-scaling.

The rest of the paper is organized as follows. 
In Section 2, we prove
Theorem \ref{BE theorem for BPRE}. 
In Section 3, we study the
existence of harmonic moments of  $W$
and give a proof of Theorem \ref{harmonic moment}. 
Section 4 is devoted to the proof of Theorem \ref{cramer type theorem}.

\section{The Berry-Essen bound for $\log Z_n $}
\label{section stein}
In this section we establish a Berry-Esseen bound for the
normalized branching process
\begin{equation*}
\frac{\log Z_n - n \mu}{\sigma \sqrt{n}},
\end{equation*}
based on Stein's method. 
In Section \ref{SecBE1}, we recall briefly the main
idea of Stein's method. 
Section \ref{SecBE2} contains some auxiliary results to be used latter in the proofs. 
In Section \ref{SecBE3}, we give a proof of Theorem \ref{BE theorem for BPRE}.

\subsection{Stein's method}
\label{SecBE1}
Let us recall briefly some facts on the Stein method to be used in the proofs. 
For more details, the reader can consult the excellent reviews \cite{barbour_steins_2014, ross} or the more complete book \cite{barbour}. 
The main idea is to describe the closeness of the law of a r.v.\ $X$ to the standard normal law using
\textit{Stein's operator}
\begin{equation}
\mathcal{A} \mathnormal{f} (w) = \mathnormal{f}^{\prime }(w) - w \mathnormal{%
f} (w),
\end{equation}
which can be seen as a substitute of the classical Fourier-transform tool.
For any $x \in \mathbb{R}$ let $f_x$ be a solution of  \textit{Stein's equation} :
\begin{equation}
\label{stein equation}
\mathds{1}( w \leq x ) - \Phi (x) = f_x^{\prime }(w) - w f_x (w) ,
\end{equation}
for all $w \in \mathbb{R}$.
The Kolmogorov distance between the law of the random variable $X$ and the normal law ${\mathcal N}(0,1)$ can be expressed in term of {\it Stein's expectation} $\mathbb{E} \mathcal{A} f_x (X)$.
Indeed, substituting $w$ by $X$ in (\ref{stein equation}), taking expectation and the supremum over $x \in \mathbb{R}$, we obtain
\begin{equation}
\label{distance kolmorogov stein}
\sup_{x \in \mathbb{R}} \left| \mathbb{P} \left( X \leq x \right) - \Phi (x)
\right| = \sup_{x \in \mathbb{R}}\left| \mathbb{E} \left(f_x (X) - X f_x (X)\right) \right| = \mathbb{E} \mathcal{A} f_x (X).
\end{equation}
The key point is that  Stein's operator $\mathcal{A}$ characterizes the standard normal law,  as  shown by
the following Lemma.
\begin{lemma}[Characterization of the normal law] 
\label{lem stein Characterization of the normal law}
A random variable $Z$ is of normal law $ \mathcal{N} (0,1)$ if and only if \ 
$\mathbb{E} \mathcal{A} \mathnormal{f} (Z) = 0$ 
for all absolutely continuous function  $f$  such that
$\mathbb{E} | \mathnormal{f}^{\prime }(Z) | < \infty.$
\end{lemma}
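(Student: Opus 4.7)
The plan is to prove the two implications separately, the forward direction by direct integration by parts, and the reverse direction by solving Stein's equation \eqref{stein equation} explicitly for each $x \in \mathbb R$ and plugging back the solution.

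For the ``only if'' direction, suppose $Z \sim \mathcal N(0,1)$ and let $f$ be absolutely continuous with $\mathbb E |f'(Z)| < \infty$. I would write
\[
\mathbb E[ Z f(Z)] = \frac{1}{\sqrt{2\pi}} \int_{-\infty}^{\infty} z f(z) e^{-z^2/2}\, dz
\]
and use the identity $z e^{-z^2/2} = -\tfrac{d}{dz} e^{-z^2/2}$ together with integration by parts. The boundary terms $f(z) e^{-z^2/2}\big|_{-\infty}^{\infty}$ vanish because the integrability hypothesis $\mathbb E|f'(Z)|<\infty$ forces $f$ to grow subexponentially relative to $e^{z^2/2}$; after standard truncation/limit arguments this yields $\mathbb E[Zf(Z)] = \mathbb E[f'(Z)]$, i.e.\ $\mathbb E[\mathcal A f(Z)] = 0$.

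For the ``if'' direction, fix $x \in \mathbb R$ and consider Stein's equation \eqref{stein equation}. The linear first order ODE $f'(w) - w f(w) = \mathds 1(w \leq x) - \Phi(x)$ admits the explicit bounded solution
\[
f_x(w) = e^{w^2/2}\int_{-\infty}^{w} \bigl[\mathds 1(t \leq x) - \Phi(x)\bigr] e^{-t^2/2}\, dt,
\]
which can be rewritten in closed form using $\Phi(w)$ and $1-\Phi(w)$ on the two regions $w \leq x$ and $w>x$. From this representation one checks that $f_x$ is absolutely continuous on $\mathbb R$, $\|f_x\|_\infty \leq \sqrt{\pi/2}$ and $\|f_x'\|_\infty \leq 2$ (these are the classical Stein bounds), so in particular $\mathbb E |f_x'(Z)| < \infty$ irrespective of the law of $Z$. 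Applying the hypothesis $\mathbb E \mathcal A f_x(Z) = 0$ together with \eqref{stein equation} yields $\mathbb P(Z \leq x) - \Phi(x) = 0$. Since $x$ was arbitrary, $Z \sim \mathcal N(0,1)$.

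The only real obstacle is the regularity check for the solution $f_x$: the indicator $\mathds 1(t \leq x)$ creates a kink at $w = x$, so one has to argue that $f_x$ is still absolutely continuous (the left and right derivatives at $x$ exist and the jump in $f_x'$ equals $-1$, matching the jump of the right-hand side of Stein's equation), and that the uniform bounds on $f_x$ and $f_x'$ truly hold. Once this is in place, both directions follow without further effort, and the integration-by-parts step in the forward direction is entirely routine.
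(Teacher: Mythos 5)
The paper states this lemma without proof: it is presented as a known fact from the Stein's method literature, with the reader pointed to the surveys \cite{barbour_steins_2014, ross}, the monograph \cite{barbour}, and \cite{chenhoLp} for background. So there is no in-paper argument to compare against, and the relevant question is simply whether your proposal is a sound proof of the standard characterization.

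Your overall structure is correct and is the canonical one: forward direction by an integration-by-parts (Gaussian) identity, reverse direction by plugging the bounded solution $f_x$ of Stein's equation into the hypothesis and reading off $\mathbb P(Z\leq x)=\Phi(x)$ for every $x$. The reverse direction is clean: once one knows $f_x$ and $f_x'$ are bounded (the paper's Lemma~\ref{lem stein fh} records $\|f_x\|\leq 1$ and $\|f_x'\|\leq 1$, sharper than the bounds you quote, but any finite bound suffices), $\mathbb E|f_x'(Z)|<\infty$ holds for every law of $Z$ and the conclusion is immediate. Your remark that $f_x$ is absolutely continuous with a derivative jump of size $-1$ at $w=x$, matching the jump of the right side of \eqref{stein equation}, is exactly the point that needs checking and you have it right.

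The one place your argument is genuinely thinner than it should be is the boundary term in the forward direction. You assert that $\mathbb E|f'(Z)|<\infty$ ``forces $f$ to grow subexponentially relative to $e^{z^2/2}$,'' but the estimate one gets directly from $|f(z)|\leq |f(0)|+\int_0^{|z|}|f'|$ together with $\int |f'|e^{-t^2/2}\,dt<\infty$ only yields $|f(z)|e^{-z^2/2}=O(1)$, not $o(1)$, so the boundary term does not obviously vanish pointwise. The standard fix is to bypass boundary terms entirely: write $f(z)=f(0)+\int_0^z f'(t)\,dt$, note that $\int z f(0)e^{-z^2/2}\,dz=0$ by symmetry, and apply Fubini (justified precisely by $\mathbb E|f'(Z)|<\infty$) to the remaining double integral, using $\int_{|t|}^\infty |z|e^{-z^2/2}\,dz = e^{-t^2/2}$. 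This gives $\mathbb E[Zf(Z)]=\mathbb E[f'(Z)]$ together with the needed integrability $\mathbb E|Zf(Z)|<\infty$ in one stroke, with no truncation or subsequence argument required. With that substitution the proof is complete and rigorous.
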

By Lemma \ref{lem stein Characterization of the normal law}, it is expected
that if the distribution of $X$ is close to the normal law $ \mathcal{N} (0,1)$ in the sense of Kolmogorov's distance, then  $\mathbb{E}
\mathcal{A} \mathnormal{f} (X)$ is close to $0$ for a large class of
functions $f$ including  the solutions $f_x$ of Stein's equation \eqref{stein equation}.  This
permits to study the convergence of $X$ to the normal law  by using only the structure of $X$ and the qualitative properties of  $f_x$.  
We will use the following result, where we use the notation $\|\cdot \|$ for the infinity norm. 
\begin{lemma}
\label{lem stein fh} 
For each $x\in {\mathbb R}$,  Stein's equation (\ref{stein equation}) has a unique bounded solution (see \cite{chenhoLp}, Lemma 1.1)
given by
\begin{eqnarray}
\mathnormal{f}_x (w) &=& e^{w^2/2} \int_w^{\infty} e^{-t^2/2} ( \Phi (x) - \mathds{1} (t \leq x)) dt \\
&=&  \left\{ \begin{tabular}{l l}
            $\sqrt{2 \pi } e^{w^2/2} \Phi (w) \left[ 1 - \Phi (x) \right]$ & if $ w \leq x$, \\
            $\sqrt{2 \pi } e^{w^2/2} \Phi (x) \left[ 1 - \Phi (w) \right]$ & if $ w  > x$.
            \end{tabular} \right. \nonumber
\end{eqnarray}
Moreover, we have for all real $x$,
\begin{equation}
\| f_x \| \leq 1, \quad \| f_x' \| \leq 1,
 \end{equation}
and for all real $w$, $s$ and $t$ (see \cite{chenhoLp}, Lemma 1.3),
\begin{eqnarray}  \label{majoration fx'}
\left| f_x^{\prime }(w+s) -f_x^{\prime }(w+t) \right| &\leq& ( |t| +
|s|)(|w|+1) + \mathds{1}(x-t \leq w \leq x-s) \mathds{1} (s \leq t)  \notag
\\
&&+ \mathds{1}(x-s \leq w \leq x-t) \mathds{1} (s>t).
\end{eqnarray}
\end{lemma}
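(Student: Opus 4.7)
The plan is to treat Stein's equation \eqref{stein equation} as a first-order linear ODE in $w$, solve it explicitly via the integrating factor $e^{-w^2/2}$, and then read off the stated bounds from the integral representation.

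First rewrite \eqref{stein equation} as $f_x'(w) - w f_x(w) = \mathds{1}(w \le x) - \Phi(x)$, multiply by $e^{-w^2/2}$ so that the left-hand side becomes $\frac{d}{dw}(e^{-w^2/2} f_x(w))$, and integrate from $w$ to $\infty$. Requiring $f_x$ to be bounded forces $e^{-w^2/2} f_x(w) \to 0$ as $w \to \infty$, which yields the integral formula for $f_x$. Uniqueness among bounded solutions is immediate since any two solutions differ by a multiple of $e^{w^2/2}$. Splitting the integral at $x$ and using $\int_w^\infty e^{-t^2/2}dt = \sqrt{2\pi}(1-\Phi(w))$ produces the two-case piecewise expression.

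For $\|f_x\| \le 1$, fix $x$ and examine the two pieces: for $w \le x$ we have $f_x(w) = \sqrt{2\pi} e^{w^2/2} \Phi(w) (1-\Phi(x))$, and for $w > x$ we have $f_x(w) = \sqrt{2\pi} e^{w^2/2} \Phi(x)(1-\Phi(w))$. Each is bounded by the classical Mills-ratio estimate $\sqrt{2\pi}e^{w^2/2}\Phi(w)(1-\Phi(w)) \le 1$. For $\|f_x'\| \le 1$, use the identity $f_x'(w) = w f_x(w) + \mathds{1}(w \le x) - \Phi(x)$ from \eqref{stein equation}, then bound $|wf_x(w)|$ directly from the explicit formula (the factor $|w|\sqrt{2\pi}e^{w^2/2}\Phi(w)$ is controlled by another Mills-type inequality), and combine with the trivial bound on the indicator term.

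The real work is the pointwise estimate \eqref{majoration fx'}. Since $f_x'$ has a unit-size jump at $w=x$ (forced by Stein's equation because of the discontinuity of $\mathds{1}(\cdot \le x)$), any mean-value argument fails across $x$, and this is exactly what produces the two indicator correction terms. My approach is to fix $w$, assume without loss of generality $s \le t$, and consider whether the interval $[w+s,w+t]$ contains $x$. On the complement $\{x \notin [w+s,w+t]\}$, $f_x$ is twice differentiable there, and from $f_x'(u) = u f_x(u) + \mathds{1}(u \le x) - \Phi(x)$ one gets $f_x''(u) = (1+u^2)f_x(u) + u(\mathds{1}(u \le x)-\Phi(x))$; combined with $\|f_x\| \le 1$ this yields $|f_x''(u)| \le C(|u|+1)$ on the relevant range, which via the mean-value theorem contributes the smooth term $(|s|+|t|)(|w|+1)$. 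When $x \in [w+s,w+t]$, i.e. $x-t \le w \le x-s$, add the jump contribution of size $1$, producing the indicator $\mathds{1}(x-t \le w \le x-s)\mathds{1}(s\le t)$; the symmetric case $s > t$ gives the other indicator. The Mills-ratio step in the sup-norm bounds and the careful accounting of the jump in the Lipschitz-type bound are the main technical points; since all three conclusions are already proved in \cite{chenhoLp} (Lemmas~1.1 and 1.3), I would cite that reference and include only the solution formula derivation and a short sketch of the jump argument for the reader's convenience.
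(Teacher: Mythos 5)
The paper does not actually prove this lemma; it states it and cites \cite{chenhoLp} (Lemmas 1.1 and 1.3), exactly as you also do at the end. Your derivation of the solution formula (integrating factor $e^{-w^2/2}$, integrating from $w$ to $\infty$, boundedness forcing the constant to vanish) and your treatment of the sup-norm bounds are the standard arguments and are fine.

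However, your sketch of \eqref{majoration fx'} via the mean-value theorem applied to $f_x''$ would not produce the bound as stated. Two issues. First, $|f_x''(u)| \le (1+u^2)|f_x(u)| + |u|$ is not $\le C(|u|+1)$ unless you also invoke the sharper estimate $|u f_x(u)| \le 1$, which you do not mention; without it the bound is quadratic in $|u|$. Second, and more seriously, even with that repair, a mean-value bound produces a factor $(|u|+1)$ with $u$ somewhere in $[w+s,w+t]$, i.e.\ a factor of order $1+|w|+|s|+|t|$ with an unspecified constant $C$ in front, whereas the lemma asserts the precise factor $(|w|+1)$ with constant $1$. The argument that actually gives \eqref{majoration fx'} is not a second-derivative MVT estimate but a direct algebraic decomposition: from Stein's equation, $f_x'(w+s)-f_x'(w+t) = \bigl[(w+s)f_x(w+s)-(w+t)f_x(w+t)\bigr] + \bigl[\mathds{1}(w+s\le x)-\mathds{1}(w+t\le x)\bigr]$. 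The second bracket is exactly the indicator correction. The first bracket is rewritten as $w\bigl(f_x(w+s)-f_x(w+t)\bigr) + \bigl(s f_x(w+s)-t f_x(w+t)\bigr)$, and then bounded by $|w|\,\|f_x'\|\,|s-t| + (|s|+|t|)\|f_x\| \le (|s|+|t|)(|w|+1)$ using $\|f_x\|\le 1$, $\|f_x'\|\le 1$. Your jump accounting (the interval $[w+s,w+t]$ containing $x$) is correct; it is the smooth part that needs this rearrangement rather than a Taylor/MVT estimate. Since the lemma is cited rather than proved in the paper, this gap is not fatal, but if you were to include a self-contained sketch you should replace the $f_x''$ argument with the decomposition above.
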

The next result gives a bound of order $n^{-1/2}$ of Stein's expectation of a sum of i.i.d.\ r.v.'s.
\begin{lemma}
\label{lem borne eq stein}
Let $X_1, \ldots , X_n$ be a sequence of i.i.d.\ r.v.'s with $\mu = \e X_1 \in \mathbb R$, $\sigma^2 = \e \left[ X_1 - \mu \right]^2 < \infty$ and $\rho = \e |X_1|^3 < \infty$. Define $Y_n = \frac{1}{\sigma \sqrt{n} } \sum_{k=1}^{n} (X_k- \mu) $. For each $x\in {\mathbb R}$,  
the unique bounded solution $f_x$ of  Stein's equation (\ref{stein equation}) satisfies
\begin{equation}
\label{equation de stein iid}
\left| \mathbb{E} \left[ f_x^{\prime }(Y_n) - Y_n f_x (Y_n ) \right] \right|
\leq C \rho / \sqrt{n},
\end{equation}
where $C$ is an absolute constant.
\end{lemma}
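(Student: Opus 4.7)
Here is my plan. I would use a leave-one-out (exchangeable-pair) argument together with the pointwise estimate \eqref{majoration fx'}. First, set $\xi_k = (X_k - \mu)/(\sigma\sqrt n)$ so that $Y_n = \sum_{k=1}^n \xi_k$ is a normalized sum of centered i.i.d.\ variables with $\sum_k \mathbb{E}\xi_k^2 = 1$, and let $Y_n^{(k)} = Y_n - \xi_k$; the key input is that $Y_n^{(k)}$ is independent of $\xi_k$. Using $\mathbb{E}\xi_k = 0$ and a first-order Taylor expansion, I would rewrite
\begin{equation*}
\mathbb{E}[\xi_k f_x(Y_n)] = \mathbb{E}\bigl[\xi_k\bigl(f_x(Y_n)-f_x(Y_n^{(k)})\bigr)\bigr] = \mathbb{E}\int_{\mathbb R} \widehat K_k(t)\,f_x'(Y_n^{(k)}+t)\,dt,
\end{equation*}
with the kernel $\widehat K_k(t) = \xi_k\bigl[\mathds{1}(0\leq t\leq \xi_k) - \mathds{1}(\xi_k\leq t\leq 0)\bigr] \geq 0$, which depends only on $\xi_k$ and integrates to $\xi_k^2$.

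Next, I would exploit the identity $\sum_k \mathbb{E}\xi_k^2 = 1$ and the independence of $\widehat K_k$ and $Y_n^{(k)}$ to express $\mathbb{E} f_x'(Y_n)$ in the same integral form. Subtracting, Stein's expectation takes the form
\begin{equation*}
\mathbb{E}[f_x'(Y_n) - Y_n f_x(Y_n)] = \sum_{k=1}^n \mathbb{E}\int \widehat K_k(t)\bigl[f_x'(Y_n^{(k)}) - f_x'(Y_n^{(k)} + t)\bigr]dt + R_n,
\end{equation*}
where the remainder $R_n = \sum_k \mathbb{E}\xi_k^2\,\mathbb{E}[f_x'(Y_n) - f_x'(Y_n^{(k)})]$ gathers analogous correction terms and is treated by the same method. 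The problem has thus been reduced to bounding a sum of expectations of differences of $f_x'$ at pairs of close points.

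At this point I would invoke the pointwise estimate \eqref{majoration fx'} from Lemma \ref{lem stein fh} term by term. Its smooth part $(|s|+|t|)(|w|+1)$, once integrated against $\widehat K_k$, contributes at most $C(\mathbb{E}|Y_n^{(k)}|+1)\,\mathbb{E}|\xi_k|^3$; since $\mathbb{E}|Y_n^{(k)}|\leq 1$, summing gives $\sum_{k=1}^n \mathbb{E}|\xi_k|^3 \leq C\rho/\sqrt n$, which is exactly of the desired order. The hard part will be the indicator contribution in \eqref{majoration fx'}: after integration against $\widehat K_k$ it produces terms of the form $\mathbb{E}\bigl[\xi_k^2\,\mathds{1}(x-|\xi_k|\leq Y_n^{(k)}\leq x)\bigr]$, which cannot be controlled by pointwise Lipschitz estimates alone. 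I plan to handle them via a classical concentration inequality
\begin{equation*}
\mathbb{P}(a \leq Y_n^{(k)} \leq b) \leq (b-a) + C\beta_3, \qquad \beta_3 := \sum_{j=1}^n \mathbb{E}|\xi_j|^3,
\end{equation*}
which is itself proved by a separate Stein-type argument and yields a bound of order $\beta_3 = O(\rho/\sqrt n)$ for the indicator part as well, completing the estimate.
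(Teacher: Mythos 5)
Your outline is correct: it is precisely the classical Chen--Shao argument (leave-one-out decomposition via the nonnegative kernel $\widehat K_k$, the smooth and indicator parts of the bound \eqref{majoration fx'}, and a randomized concentration inequality for $Y_n^{(k)}$ to handle the indicator terms), which is exactly the proof given in the reference \cite{chenhoLp} that the paper cites in lieu of proving the lemma itself. The only cosmetic point is that, as in the lemma's statement, the normalization by $\sigma^3$ is silently absorbed into the absolute constant $C$.
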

Note that from \eqref{distance kolmorogov stein} and \eqref{equation de stein iid} one gets the classical Berry-Esseen theorem. The proof of Lemma \ref{lem borne eq stein} can be found in \cite{chenhoLp}. 

\subsection{Auxiliary results}
\label{SecBE2}
In the proof of Theorem \ref{BE theorem for BPRE} we make use of the following two assertions. 
The first one is a consequence of the Marcinkiewicz-Zygmund inequality (see \cite{liu2001local}, Lemma 1.4), which  will  be used several times.
\begin{lemma}[\cite{liu2001local}, Lemma 1.4] \label{lemma MZ}
Let $(X_i)_{i \geq 1}$ be a sequence of i.i.d.\ centered r.v.'s. Then we have for $p\in (1, \infty)$,
\begin{equation} \label{MZ inequality}
\mathbb{E} \left| \sum_{i=1}^{n} X_i \right|^p \leq
\left\{
\begin{array}{ll}
(B_p)^p \mathbb{E} \left( | X_i |^p \right) n , & \text{if} \  \ 1 < p \leq 2, \\
(B_p)^p \mathbb{E} \left( | X_i |^p \right) n^{p/2} , & \text{if }\ \  p>2,%
\end{array}%
\right.
\end{equation}
where $B_p = 2 \min \left\{ k^{1/2} : k \in \mathbb{N}, k \geq p/2 \right\}$ is a constant depending only on  $p$  (so that $B_p =2 $ if $1<p \leq 2$).
\end{lemma}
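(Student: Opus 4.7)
The plan is to treat the two ranges $1 < p \leq 2$ and $p > 2$ separately, since they require different moment-comparison tools.

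For $1 < p \leq 2$, I would invoke the classical von Bahr--Esseen inequality
\[
\e \l| \sum_{i=1}^{n} X_i \r|^p \leq 2 \sum_{i=1}^n \e |X_i|^p,
\]
whose standard derivation starts from the elementary pointwise bound $|a+b|^p \leq |a|^p + p \, \mathrm{sgn}(a) \, |a|^{p-1} b + 2 |b|^p$ (valid for $1 < p \leq 2$), conditions on $S_{k-1} = X_1 + \cdots + X_{k-1}$, and uses the centering of $X_k$ to annihilate the linear term. This produces the telescoping recursion $\e |S_k|^p \leq \e |S_{k-1}|^p + 2 \e |X_k|^p$ and directly yields the constant $B_p = 2$ stated in the lemma.

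For $p > 2$, the natural route is via Khintchine's inequality applied to a symmetrized version of the sum. Writing $\tilde X_i = X_i - X_i'$ for an independent copy $(X_i')$ and using Jensen together with centering, one gets $\e |S_n|^p \leq \e |\tilde S_n|^p$. Since each $\tilde X_i$ is symmetric, inserting independent Rademacher signs $\ee_i$ leaves the joint law unchanged, and Khintchine's inequality applied conditionally on $(\tilde X_i)$ gives, after taking unconditional expectation,
\[
\e \l| \sum_{i=1}^n \ee_i \tilde X_i \r|^p \leq c_p \, \e \l( \sum_{i=1}^n \tilde X_i^2 \r)^{p/2}.
\]
The Jensen-type inequality $\l( \tfrac{1}{n} \sum_i \tilde X_i^2 \r)^{p/2} \leq \tfrac{1}{n} \sum_i |\tilde X_i|^p$ (valid since $p/2 \geq 1$) then converts the right-hand side into $c_p \, n^{p/2} \, \e |\tilde X_1|^p$, and absorbing the symmetrization factor $2^p$ yields the desired $n^{p/2} \e |X_1|^p$ scaling.

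The main obstacle is matching the explicit constant $B_p = 2 \min\{\sqrt{k} : k \in \bb N, \, k \geq p/2\}$ appearing in the statement. Its $\sqrt{\lceil p/2 \rceil}$ growth indicates that the derivation in \cite{liu2001local} does not go through a generic Khintchine or Rosenthal constant, but instead proceeds by direct expansion of $\e S_n^{2k}$ with $k = \lceil p/2 \rceil$, isolating diagonal contributions and iterating on $k$. I would follow that combinatorial computation to recover the precise constant rather than settle for a generic bound; for the applications in Section \ref{section stein} the exact value of $B_p$ plays no role, so in practice it would suffice to know that $B_p$ depends only on $p$.
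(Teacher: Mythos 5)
The paper does not actually prove this lemma: it is imported verbatim as Lemma~1.4 of the cited reference \cite{liu2001local}, so there is no internal argument to compare yours against. Taken on its own terms, your sketch is a correct and standard derivation of the inequality up to the value of the constant. For $1<p\leq 2$ the von Bahr--Esseen recursion $\mathbb{E}|S_k|^p \leq \mathbb{E}|S_{k-1}|^p + 2\,\mathbb{E}|X_k|^p$ (the linear term dying by conditioning on $S_{k-1}$ and centering) gives $\mathbb{E}|S_n|^p\leq 2n\,\mathbb{E}|X_1|^p$, which is even sharper than the stated bound $2^p n\,\mathbb{E}|X_1|^p$. For $p>2$ the chain symmetrization $\to$ Khintchine conditionally on the $\tilde X_i$ $\to$ power-mean inequality $\bigl(\tfrac1n\sum_i \tilde X_i^2\bigr)^{p/2}\leq \tfrac1n\sum_i|\tilde X_i|^p$ $\to$ desymmetrization via $\mathbb{E}|\tilde X_1|^p\leq 2^p\,\mathbb{E}|X_1|^p$ is sound and delivers $C_p\, n^{p/2}\,\mathbb{E}|X_1|^p$ with a constant depending only on $p$. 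You are right that this route does not reproduce the specific constant $B_p=2\lceil p/2\rceil^{1/2}$; that form comes from the more elementary argument in \cite{liu2001local}, which for $p>2$ sets $k=\lceil p/2\rceil$, bounds $\mathbb{E}|S_n|^p$ by $\bigl(\mathbb{E}|S_n|^{2k}\bigr)^{p/(2k)}$ and expands the even moment $\mathbb{E}S_n^{2k}$ directly, rather than passing through Khintchine. Since every invocation of the lemma in Sections~2--4 uses only that $B_p$ is a finite constant depending on $p$ alone, your weaker constant is harmless, and your explicit acknowledgement of this point is exactly the right caveat.
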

The second one  is a result concerning the exponential rate of convergence 
of $W_n$ to $W$
in $L^p ( \mathbb{P} )$ from \cite{huang_convergence_2014}, Theorem 1.5.
\begin{lemma} \label{convergence Lp exponentielle} 
Under  \textbf{A2},  there exist two  constants $C>0$ and $\delta \in (0,1)$ such that
\begin{eqnarray*}
\left(\mathbb{E} \left| W_n-W \right|^p \right)^{1/p} \leq C \delta^n .
\end{eqnarray*}
\end{lemma}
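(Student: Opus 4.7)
The plan is a telescoping-martingale estimate. First I would write
$$W - W_n \;=\; \sum_{j=n}^{\infty}\bigl(W_{j+1}-W_j\bigr)$$
as an $L^p$-convergent series, so that Minkowski's inequality reduces the claim to showing that $\|W_{j+1}-W_j\|_p$ decays geometrically in $j$.

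For the increment bound I would work conditionally on $\mathcal{F}_j$. Since
$$W_{j+1}-W_j \;=\; \frac{1}{\Pi_{j+1}}\sum_{i=1}^{Z_j}\bigl(N_{j,i}-m_j\bigr)$$
is a sum of $Z_j$ centered i.i.d.\ random variables under $\mathbb{P}_\xi$, the Marcinkiewicz--Zygmund inequality from Lemma~\ref{lemma MZ} (restricted first to $p\in(1,2]$) gives
$$\mathbb{E}\bigl[\,|W_{j+1}-W_j|^p \,\big|\, \mathcal{F}_j\bigr] \;\leq\; \frac{B_p^p\, Z_j}{\Pi_j^{p}\,m_j^{p}}\,\mathbb{E}_\xi|N_{j,1}-m_j|^p.$$
Now I would exploit that $Z_j/\Pi_j^p$ is measurable with respect to $\sigma(\xi_0,\dots,\xi_{j-1},\,N_{k,i}:k<j)$ and hence independent of $\xi_j$, while $\mathbb{E}_\xi|N_{j,1}-m_j|^p/m_j^p$ depends only on $\xi_j$. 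Factoring and using $\mathbb{E}_\xi Z_j=\Pi_j$ yields
$$\mathbb{E}|W_{j+1}-W_j|^p \;\leq\; B_p^p\,\mathbb{E}\bigl[\Pi_j^{1-p}\bigr]\cdot\mathbb{E}\bigl|Z_1/m_0-1\bigr|^p,$$
and the second factor is finite by $\mathbf{A2}$ together with the bound $|z-1|^p\leq 2^{p-1}(z^p+1)$.

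Geometric decay then follows from $\mathbb{E}\Pi_j^{1-p}=(\mathbb{E}m_0^{1-p})^j$, using independence of the $\xi_i$. Assumption~\eqref{p0} forces $m_0\geq 1$ a.s., so $m_0^{1-p}\leq 1$; and since $\mu=\mathbb{E}\log m_0>0$ we must have $\mathbb{P}(m_0>1)>0$, whence $\theta:=\mathbb{E}m_0^{1-p}\in(0,1)$ for every $p>1$. Therefore $\|W_{j+1}-W_j\|_p\leq C\,\theta^{j/p}$, and Minkowski delivers $\|W_n-W\|_p\leq C'\delta^n$ with $\delta:=\theta^{1/p}\in(0,1)$.

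The main obstacle is the case $p>2$: there Marcinkiewicz--Zygmund produces $Z_j^{p/2}$ rather than $Z_j$, leaving the more delicate term $\mathbb{E}\bigl[W_j^{p/2}\,\Pi_j^{1-p/2}\bigr]$, which is not controlled merely by the first-order factorization above. To handle it I would first apply the argument with a reduced exponent $p'\in(1,2]$ to obtain exponential decay in $L^{p'}$, and then interpolate against a uniform $L^p$-bound on $(W_n)$ derived from a Burkholder-type martingale inequality (using the full strength of $\mathbf{A2}$) to recover the stated geometric rate in the original $L^p$-norm.
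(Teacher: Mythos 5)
Your argument for $p\in(1,2]$ is correct and is essentially the paper's own method: the paper does not actually prove Lemma~\ref{convergence Lp exponentielle} (it is quoted from Huang--Liu), but the identical telescoping--Marcinkiewicz--Zygmund computation, with the same factorization $\mathbb{E}|W_{j+1}-W_j|^p\le 2^p(\mathbb{E}m_0^{1-p})^{\,j}\,\mathbb{E}|Z_1/m_0-1|^p$, is exactly how the paper proves the uniform-in-$\lambda$ analogue, Lemma~\ref{convergence Lp exponentielle uniforme}. Your closing paragraph on $p>2$ is only an unexecuted sketch (and the displayed term should be $\mathbb{E}[W_j^{p/2}\Pi_j^{-p/2}]$, since $Z_j^{p/2}\Pi_j^{-p}=W_j^{p/2}\Pi_j^{-p/2}$), but this is immaterial here: \textbf{A2} with some $p>2$ implies \textbf{A2} with exponent $2$ by Jensen, and the paper only ever invokes this lemma after fixing $p\in(1,2]$, so the restriction you start with is the one that matters.
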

The next result concerns the existence of positive moments of the r.v. $\log W$.
\begin{lemma}
\label{moment log W} 
Assume that $\mathbb{E} | \log m_0 |^{2p} < \infty$, for some  $%
p>1.$ Then we have, for all $q \in (0, p)$, 
\begin{equation*} 
\mathbb{E} | \log W|^{q} < \infty  \quad \text{and} \quad \underset{n \in \mathbb{N}}{\sup} \ \mathbb{E} |\log W_n |^{q} < \infty . 
\end{equation*}
\end{lemma}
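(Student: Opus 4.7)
The plan is to split $|\log W_n|^q \leq 2^{q-1}\bigl((\log^+ W_n)^q + (\log^- W_n)^q\bigr)$ and bound each piece uniformly in $n$. The statement for $W$ will then follow by Fatou's lemma together with the almost-sure convergence $W_n \to W$ (which holds under \eqref{A0}).

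For the positive part, for any $\alpha \in (0,1)$ there exists a constant $C_{\alpha,q}$ such that $(\log^+ x)^q \leq C_{\alpha,q}\, x^\alpha$ for all $x \geq 0$. Since $(W_n)$ is a nonnegative $\mathbb{P}$-martingale with $\mathbb{E} W_n = 1$, Jensen's inequality applied to the concave map $t \mapsto t^\alpha$ gives $\mathbb{E} W_n^\alpha \leq (\mathbb{E} W_n)^\alpha = 1$, and hence $\sup_n \mathbb{E}(\log^+ W_n)^q \leq C_{\alpha,q}$. The same bound passes to the limit for $W$.

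The negative part is the main challenge. Decomposition \eqref{decom-logZn} together with $Z_n \geq 1$ (from \eqref{p0}) gives the crude estimate $\log^- W_n \leq S_n$, but $\mathbb{E} S_n^q \sim n^q$, so this is not uniform in $n$ and a sharper argument is needed. I would instead use the tail comparison
\[
\mathbb{P}(W_n \leq e^{-u}) \leq \mathbb{P}(W \leq 2e^{-u}) + \mathbb{P}(|W_n - W| > e^{-u}),
\]
where the second term decays geometrically in $n$ by Lemma \ref{convergence Lp exponentielle}, and the first is controlled by the left tail of $W$. Integrating this estimate against $q u^{q-1} du$ would give $\sup_n \mathbb{E}(\log^- W_n)^q < \infty$ once $\mathbb{E}(\log^- W)^q < \infty$ is known.

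The main obstacle is therefore to establish $\mathbb{E}(\log^- W)^q < \infty$ under the sole assumption $\mathbb{E}|\log m_0|^{2p} < \infty$, without invoking the harmonic moments of Theorem \ref{harmonic moment} (which require the stronger condition \textbf{A3}). My approach would exploit the distributional identity $W = m_0^{-1}\sum_{i=1}^{Z_1} \tilde W^{(i)}$, with $\tilde W^{(i)}$ i.i.d.\ copies of $W$ for the shifted environment, independent of $(\xi_0, Z_1)$: since $Z_1 \geq 1$, one obtains the pathwise lower bound $\log W \geq -X + \log \tilde W^{(1)}$ with $X = \log m_0$. Iterating this $n$ times produces a partial sum of i.i.d.\ copies of $X$, whose fluctuations are controlled by the Marcinkiewicz--Zygmund inequality (Lemma \ref{lemma MZ}) under $\mathbb{E}|\log m_0|^{2p} < \infty$, while the remainder term is handled using $W > 0$ a.s.\ and the strong law of large numbers applied to $S_n/n \to \mu > 0$. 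This should yield the needed integrability of $\log^- W$ and close the argument.
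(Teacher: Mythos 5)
Your treatment of $\log^+ W_n$ is fine, but the step you yourself identify as the crux --- proving $\mathbb{E}(\log^- W)^q<\infty$ --- does not work as proposed. Keeping a single line of descent gives, after $n$ iterations, $W\geq \Pi_n^{-1}W^{(n)}$ with $W^{(n)}$ distributed as $W$ and independent of $\Pi_n$. Hence $\mathbb{P}(W<\varepsilon)\leq\mathbb{P}(W^{(n)}<\Pi_n\varepsilon)$, and since $\Pi_n\geq 1$ the right-hand side is \emph{larger} than $\mathbb{P}(W<\varepsilon)$: the inequality is trivially true and produces no decay. Invoking $S_n/n\to\mu>0$ only makes $\Pi_n^{-1}$ smaller, i.e.\ worsens the bound; and the remainder $W^{(n)}$ has exactly the unknown left tail you are trying to control, so the argument is also circular. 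The actual mechanism for the left-tail decay of $W$ is branching, not a single lineage: $W$ can be small only if \emph{all} of the (typically geometrically many) sub-lineages are simultaneously small. The paper captures this through the Laplace transform: iterating $\phi_{\xi}(t)\leq\phi_{T\xi}(t/m_0)\bigl(p_1(\xi_0)+(1-p_1(\xi_0))\phi_{T\xi}(t/m_0)\bigr)$ yields the product in \eqref{2.6}, whose expectation contributes the geometric factor $\alpha^n$ with $\alpha<1$ precisely because $\mathbb{E}p_1(\xi_0)<1$ (see \eqref{EP_1}); the hypothesis $\mathbb{E}|\log m_0|^{2p}<\infty$ enters only through Marcinkiewicz--Zygmund to bound the truncation error $\mathbb{P}(\Pi_n\geq A^n)\leq Cn^{-p}$, and choosing $n\asymp\log t$ gives $\phi(t)=O((\log t)^{-p})$, hence $\mathbb{E}|\log W|^q<\infty$ for $q<p$ via \eqref{tauberian theorem}--\eqref{eq expression integrale E log W^q}. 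Some ingredient of this type (an a priori bound on $\mathbb{P}(W<\varepsilon)$ exploiting the branching) is indispensable and is missing from your plan.

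Two further problems with the transfer to $\sup_n\mathbb{E}(\log^- W_n)^q$. First, in your tail comparison the term $\mathbb{P}(|W_n-W|>e^{-u})$ does not decay as $u\to\infty$ (it increases to $\mathbb{P}(|W_n-W|>0)$), so $\int_0^\infty qu^{q-1}\,\mathbb{P}(|W_n-W|>e^{-u})\,du$ diverges for every fixed $n$; the decomposition would need to be reworked. Second, Lemma \ref{convergence Lp exponentielle} requires condition \textbf{A2}, which is not among the hypotheses of Lemma \ref{moment log W}, so you may not invoke it here. The paper avoids both issues at once: after reducing to $q\in(1,p)$ by H\"older, the map $x\mapsto|\log x|^q\mathds{1}(x\leq1)$ is nonnegative and convex, and since $W_n=\mathbb{E}(W\mid\mathcal{F}_n)$ under \eqref{A0}, Lemma 2.1 of \cite{liu} gives $\sup_n\mathbb{E}|\log W_n|^q\mathds{1}(W_n\leq1)=\mathbb{E}|\log W|^q\mathds{1}(W\leq1)$ with no extra assumptions.
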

We prove Lemma \ref{moment log W} by studying the asymptotic  behavior of the Laplace transform of $W$.
Define the quenched  and annealed Laplace transform of $W$ by
\begin{equation*}
\phi_{\xi} (t) = \mathbb{E}_{\xi} e^{-t W} \ \ \ \text{and} \ \ \ \phi (t) =
\mathbb{E} \phi_{\xi} (t) = \mathbb{E} e^{-t W},
\end{equation*}
where $t \geq 0$.
Then by Markov's inequality, we have for $t>0$,
\begin{equation}
\label{tauberian theorem}
\mathbb{P} ( W < t^{-1} ) \leq e \; \mathbb{E} e^{- t W} = e \;  \phi (t).
\end{equation}
\begin{proof}[Proof of Lemma \ref{moment log W}]
By H\"older's inequality, it is enough to prove the assertion of the lemma for $q\in (1,p).$
It is obvious that there exists a constant $C>0$ such that $\mathbb{E} | \log W |^{q} \mathds{1}(W \geq 1 ) \leq C \bb{E} W < \infty $. So
it remains to show that $\mathbb{E} | \log W |^{q} \mathds{1}( W \leq 1 ) < \infty $. 
By \eqref{tauberian theorem} and the fact that
\begin{equation}
\label{eq expression integrale E log W^q}
\e \left| \log W \right|^{q} \mathds{1} ( W \leq 1) = q \int_{1}^{+ \infty} \frac{1}{t} \left( \log t  \right)^{q-1} \p ( W \leq t^{-1} ) dt,
\end{equation}
it is enough to show that, as $t \to \infty,$
\begin{equation*}
\phi (t) =  O ( \log t)^{-p} .
\end{equation*}
It is well-known that $\phi_{\xi} (t) $ satisfies the functional relation
\begin{equation} \label{2.5}
\phi _{\xi} (t) = \mathnormal{f}_0 \left( \phi_{T \xi} \left( \frac{t}{m_0} \right) \right),
\end{equation}
where $f_0$ is the generating function defined by (\ref{defin001}) 
and $T^n$ is the shift operator defined by  $T^n(\xi_0, \xi_1, \ldots ) = (\xi_n, \xi_{n+1}, \ldots )$ for $n\geq 1.$
Using (\ref{2.5}) and the fact that $ \phi^k_{T \xi} \left( \frac{t}{m_0} \right) \leq \phi^2_{T \xi} \left( \frac{t}{m_0} \right)$ for all $k \geq 2$, we obtain
\begin{eqnarray} \label{2.5a} \nonumber
\phi_{\xi} (t) &\leq& p_1 (\xi_0) \phi_{T \xi } \left( \frac{t}{m_0} \right) + ( 1 - p_1 ( \xi_0 ) ) \phi_{T \xi }^2 \left( \frac{t}{m_0} \right) \\
&=& \phi_{T \xi } \left( \frac{t}{m_0 } \right) \left( p_1 ( \xi_0 ) + ( 1- p_1 ( \xi_0 ) ) \phi_{T \xi } \left( \frac{t}{m_0} \right) \right).
\end{eqnarray}
By iteration, this leads to
\begin{equation} \label{2.6}
\phi_{\xi} (t) \leq \phi_{T^n \xi} \left(\frac{t}{\Pi_n}\right) \ \prod_{j=0}^{n-1} \left( p_1 ( \xi_j) + (1- p_1 ( \xi_j) ) \phi_{T^n \xi} \left( \frac{t}{\Pi_n} \right) \right).
\end{equation}
Taking expectation and using the fact that  $ \phi_{T^n \xi} (t)  \leq 1$, we get
\[ \phi (t) \leq \mathbb{E} \left[ \prod_{j=0}^{n-1} \left( p_1 ( \xi_j) + (1- p_1 ( \xi_j) ) \phi_{T^n \xi} \left( \frac{t}{\Pi_n} \right) \right) \right]. \]
Using a simple truncation and the fact that $ \phi_{\xi} ( \cdot ) $ is non-increasing, we have, for all $A>1$,
\begin{eqnarray*}
\phi (t) &\leq& \mathbb{E} \left[  \prod_{j=0}^{n-1} \left( p_1 ( \xi_j) + (1- p_1 ( \xi_j) ) \phi_{T^n \xi} \left( \frac{t}{A^n} \right) \right) \mathds{1}( \Pi_n \leq A^n ) \right] + \mathbb{P}( \Pi_n \geq A^n ) \\
&\leq& \mathbb{E} \left[  \prod_{j=0}^{n-1} \left( p_1 ( \xi_j) + (1- p_1 ( \xi_j) ) \phi_{T^n \xi} \left( \frac{t}{A^n} \right) \right)  \right] + \mathbb{P}( \Pi_n \geq A^n ).
\end{eqnarray*}
Since $T^n \xi$ is independent of $\sigma ( \xi_0, ... ,\xi_{n-1} )$, and the r.v.'s $ p_1 ( \xi_i)$  ($i\geq 0$) are i.i.d., we have
\begin{equation*}
\phi (t)  \leq \left[ \mathbb{E} p_1 (\xi_0) + (1-  \mathbb{E} p_1 ( \xi_0 ) )  \phi \left(\frac{t}{A^n}\right) \right]^n + \mathbb{P}( \Pi_n \geq A^n ).
\end{equation*}
By the dominated convergence theorem, we have $\lim_{t \to \infty} \phi (t) = 0$. Thus, for any $\gamma \in (0,1)$, there exists a constant $K>0$ such that, for all $ t \geq K$, we have $\phi(t) \leq \gamma$.
Then for all
$ t \geq K A^n,$ we have $ \phi  \left(\frac{t}{A^n}\right) \leq \gamma $. 
Consequently, for $ t  \geq K A^n$,
\begin{equation}
\phi (t) \leq \alpha^n + \mathbb{P}( \Pi_n \geq A^n ) ,
\label{alpha0}
\end{equation}
where, by (\ref{EP_1}), 
\begin{equation}
\alpha = \mathbb{E} p_1 (\xi_0) + (1-  \mathbb{E} p_1 (\xi_0 ) ) \gamma  \in (0,1).
\label{alpha}
\end{equation}
Recall that $\mu= \mathbb{E} X$ and $S_n=\log \Pi_n = \sum_{i=1}^{n} X_i$. Choose $A$ such that $\log A>\mu$ and let $\delta =  \log A-\mu >0$. By Markov's inequality and Lemma \ref{lemma MZ}, there exists a constant $C>0$ such that, for $n \in \mathbb{N}$,
\begin{eqnarray*}
\mathbb{P}( \Pi_n \geq A^n ) &\leq& \mathbb{P} \left( \left| S_n - n \mu \right| \geq n \delta \right) \\
&\leq& \frac{\mathbb{E} \left| \sum_{i=1}^{n} (X_i- \mu) \right|^{2p} }{n^{2p} \delta^{2p} } \\
&\leq& \frac{C}{n^{p}}.
\end{eqnarray*}
Then, by (\ref{alpha0}), we get, for $n$ large enough and $t \geq  K A^n$,
\begin{equation} \label{18}
\phi (t) \leq \frac{C}{n^{p}}.
\end{equation}
For $t \geq K,$ define $n_0 = n_0 (t) = \left[ \frac{ \log (t/K)}{ \log (A)} \right] \geq 0,$ where $ [x]$ stands for the integer part of $x$, so that
\[
\frac{\log (t/K)}{\log(A)} - 1 \leq n_0 \leq \frac{\log (t/K)}{\log(A)}
\quad \text{and} \quad  t \geq K A^{n_0}.
\]
Coming back to (\ref{18}), with  $n=n_0$, we get for $ t \geq K $,
\[ \phi (t) \leq \frac{ C ( \log A)^{p}}{ ( \log (t/K) )^{p} } \leq C ( \log t )^{-p}, \]
which proves that $ \mathbb{E} | \log W |^{q} < \infty$ for all $q \in (1,p)$, (see \eqref{eq expression integrale E log W^q}).
Furthermore, since $x \mapsto |\log^{q}  (x) | \mathds{1}( x \leq 1) $ is a non-negative and convex function for $q\in (1,p)$,  by Lemma 2.1 of \cite{liu} we have
$$
\underset{n \in \mathbb{N}}{\sup} \ \mathbb{E} \left| \log W_n \right|^{q}  \mathds{1}( W_n \leq 1 )  = \mathbb{E} \left| \log W \right|^{q}  \mathds{1}( W \leq 1 ) .
$$
By a standard truncation we obtain
\begin{equation}
\label{majoration uniforme moment log W}
\underset{n \in \mathbb{N}}{\sup} \ \mathbb{E} \left| \log W_n \right|^{q}  \leq  C \mathbb{E}W + \mathbb{E} \left| \log W \right|^{q}  \mathds{1}( W \leq 1 )< \infty ,
\end{equation}
which ends the proof of the lemma.
\end{proof}
The next result concerns the exponential speed of convergence of $\log W_n$ to $\log W$.
\begin{lemma}
\label{lemma logW cv exp}
Assume \textbf{A2} and  there exists a constant $q>2$ such that $\mathbb{E}| \log m_0|^{q} < \infty$. Then there exist two  constants $C>0$ and $\delta \in (0,1)$ such that for all $n\geq 0$,
\begin{equation}
 \mathbb{E} \left| \log W_n -\log W \right| \leq C \delta^n.
\label{exp conv log W 2}
\end{equation}
\end{lemma}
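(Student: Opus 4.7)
The plan is to exploit the multiplicative decomposition of $W$ at generation $n$. By the branching property, conditionally on $\mathcal{F}_n$ each of the $Z_n$ individuals alive in generation $n$ launches an independent BPRE in the shifted environment $T^n\xi$, so that
\[
W \;=\; \frac{1}{\Pi_n}\sum_{i=1}^{Z_n}W^{(i,n)} \;=\; W_n\, V_n,
\qquad
V_n \;:=\; \frac{1}{Z_n}\sum_{i=1}^{Z_n}W^{(i,n)},
\]
where, conditionally on $\mathcal{F}_n$, the $W^{(i,n)}$ are i.i.d.\ with the law of $W$ under $\mathbb{P}_{T^n\xi}$ (in particular each has mean $1$). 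Since $\log W-\log W_n=\log V_n$, it suffices to show $\mathbb{E}|\log V_n|\leq C\delta^n$.

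The first concrete step is an $L^p$ concentration estimate for $V_n$ around $1$. Applying the Marcinkiewicz--Zygmund inequality (Lemma \ref{lemma MZ}) to the centered i.i.d.\ family $\{W^{(i,n)}-1\}$ conditionally on $\mathcal{F}_n$, with the exponent $p\in(1,2]$ furnished by \textbf{A2}, and then taking annealed expectation (using the independence of $T^n\xi$ and $\mathcal{F}_n$ together with $\mathbb{E}|W-1|^p<\infty$, which follows from \textbf{A2} and Lemma \ref{convergence Lp exponentielle}) yields
\[
\mathbb{E}|V_n-1|^p \;\leq\; C\,\mathbb{E}\bigl[Z_n^{-(p-1)}\bigr].
\]

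Next I would pass from $|V_n-1|$ to $|\log V_n|$ via the standard splitting at $\{V_n\geq 1/2\}$: on that set use the Lipschitz bound $|\log v|\leq 2|v-1|$, and on its complement combine $\mathbb{P}(V_n<1/2)\leq 2^p\,\mathbb{E}|V_n-1|^p$ with H\"older's inequality and the uniform moment estimate $\sup_n\mathbb{E}|\log W_n|^r + \mathbb{E}|\log W|^r<\infty$, valid for any $r\in(1,q/2)$ by Lemma \ref{moment log W}. The upshot is
\[
\mathbb{E}|\log V_n| \;\leq\; C\,\mathbb{E}\bigl[Z_n^{-(p-1)}\bigr]^{\kappa}
\]
for some $\kappa\in(0,1)$, so the whole problem reduces to an exponential bound on $\mathbb{E}[Z_n^{-(p-1)}]$.

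Establishing $\mathbb{E}[Z_n^{-(p-1)}]\leq C\,\delta_0^{\,n}$ for some $\delta_0\in(0,1)$ is the step I expect to be the main obstacle. My plan is to write $Z_n=e^{S_n}W_n$ and split according to the behaviour of the random walk $S_n$. Since $X\geq 0$ and $\mu>0$, the elementary Chernoff bound
\[
\mathbb{P}(S_n\leq \beta n) \;\leq\; \bigl(e^{\lambda\beta}\,\mathbb{E}e^{-\lambda X}\bigr)^n
\]
is exponentially small for sufficiently small $\lambda>0$ and any $\beta\in(0,\mu)$, with no exponential moment on $X$ required. On the complementary event $\{S_n\geq \beta n\}$ the deterministic factor $e^{-(p-1)S_n}$ is already $\leq e^{-(p-1)\beta n}$, and the remaining factor $W_n^{-(p-1)}$ is absorbed via a H\"older step combined with the uniform $L^r$ bound on $\log W_n$ from Lemma \ref{moment log W}. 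Gluing these ingredients yields $\mathbb{E}|\log W_n-\log W|\leq C\delta^n$.
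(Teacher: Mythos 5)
Your overall architecture is sound and runs parallel to the paper's own proof: the paper works with the one-step ratio $W_{n+1}/W_n=1+\eta_n$, where $\eta_n=Z_n^{-1}\sum_{i=1}^{Z_n}(N_{n,i}/m_n-1)$, and telescopes, while you go directly to the limit via $W/W_n=V_n=Z_n^{-1}\sum_i W^{(i,n)}$. Both decompositions lead, after the same Marcinkiewicz--Zygmund step (Lemma \ref{lemma MZ}) and the same truncation/H\"older passage from $|V_n-1|$ to $|\log V_n|$, to the need for a geometric bound on $\mathbb{E}[Z_n^{1-p}]$. Up to that point your argument is correct: invoking Lemma \ref{convergence Lp exponentielle} to get $W\in L^p(\mathbb{P})$, and Lemma \ref{moment log W} with exponent $q/2>1$ to get $\sup_n\mathbb{E}|\log V_n|^r<\infty$ for $r\in(1,q/2)$, both work.

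The gap is precisely at the step you flagged as the main obstacle. Splitting off $\{S_n\le\beta n\}$ by a Chernoff bound is fine, but on the complementary event you are left having to control $\mathbb{E}[W_n^{-(p-1)}]$ (or, in any variant of the splitting, $\mathbb{P}(W_n<e^{-\gamma n})$), and Lemma \ref{moment log W} cannot deliver this: it gives a \emph{polynomial} moment $\sup_n\mathbb{E}|\log W_n|^{r}<\infty$ for some finite $r<q/2$, whereas $\mathbb{E}[W_n^{-\epsilon}\mathds{1}(W_n\le1)]=\mathbb{E}[e^{\epsilon|\log W_n|}\mathds{1}(W_n\le1)]$ is an \emph{exponential} moment of $\log W_n$, and Markov's inequality only yields $\mathbb{P}(|\log W_n|>\gamma n)\le C(\gamma n)^{-r}$, which is polynomially, not geometrically, small. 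Under the hypotheses of the lemma (\textbf{A2} and $\mathbb{E}|\log m_0|^q<\infty$; \textbf{A3} is not assumed, so Theorem \ref{harmonic moment} is unavailable) no H\"older step can absorb $W_n^{-(p-1)}$, and your chain would only produce a polynomial rate. The correct and much more elementary route is to bound $\mathbb{E}[Z_n^{1-p}]$ directly through the branching property: by convexity of $x\mapsto x^{1-p}$ and Jensen's inequality applied to the empirical distribution of the offspring,
\begin{equation*}
\Big(\sum_{i=1}^{Z_n}N_{n,i}\Big)^{1-p}\le Z_n^{1-p}\cdot\frac{1}{Z_n}\sum_{i=1}^{Z_n}N_{n,i}^{1-p},
\end{equation*}
whence $\mathbb{E}[Z_{n+1}^{1-p}]\le\mathbb{E}[Z_n^{1-p}]\,\mathbb{E}[Z_1^{1-p}]$ and, by induction, $\mathbb{E}[Z_n^{1-p}]\le(\mathbb{E}Z_1^{1-p})^n$; condition \eqref{p0} gives $Z_1\ge1$ a.s.\ and \eqref{EP_1} gives $\mathbb{P}(Z_1=1)<1$, so $\mathbb{E}Z_1^{1-p}<1$. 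With this substitution for your random-walk splitting, your proof closes and is otherwise a legitimate (non-telescoping) alternative to the paper's.
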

\begin{proof}
From (\ref{relation recurrence Zn}) and (\ref{Wn}) we get the following useful decomposition:
\begin{equation}
\label{difference Wn+1-Wn}
W_{n+1} - W_n = \frac{1}{\Pi_n} \sum_{i=1}^{Z_n} \left( \frac{N_{n,i}}{m_n}-1 \right),
\end{equation}
which reads also
\begin{equation}
\label{eta_n}
\frac{W_{n+1}}{W_n}-1 = \frac{1}{Z_n} \sum_{i=1}^{Z_n} \left( \frac{N_{n,i}}{m_n}-1 \right).
\end{equation}
By (\ref{eta_n}) we have the decomposition
\begin{equation}
\label{decomposition log Wn eta_n}
\log W_{n+1} - \log W_n = \log ( 1 + \eta_{n} ),
\end{equation}
with
\begin{equation}
\eta_{n} = \frac{W_{n+1}}{W_n}-1= \frac{1}{Z_n} \sum_{i=1}^{Z_n} \left( \frac{N_{n,i}}{m_n} -1 \right).
\end{equation}
Under ${\mathbb P}_\xi$  the r.v.'s $\frac{N_{n,i}}{m_n}-1 \; (i \geq 1) $ are i.i.d., centered and independent of $Z_n$.

Choose $p \in (1,2]$ such that \textbf{A2} holds.
We first show that
\begin{equation}
\label{MZ eta_n}
\left(\mathbb{E}|\eta_n|^p \right)^{1/p} \leq C \delta^{n},
\end{equation}
for some constants $C>0$ and $\delta \in (0,1)$.
Applying Lemma \ref{lemma MZ} under $\mathbb{P}_{\xi}$ and using the independence between the r.v.'s $\frac{N_{n,i}}{m_n} \; (i \geq 1)$ and $Z_n$, we get
\begin{eqnarray*}
\mathbb{E}_{\xi} |\eta_n|^p &\leq& 2^p \mathbb{E}_{\xi} \left[ Z_n^{1-p} \right] \mathbb{E}_{\xi} \left| \frac{N_{n,1}}{m_n}-1 \right|^p .
\end{eqnarray*}
By \textbf{A2} and the fact that under the probability $\mathbb {P}$ the random variable $\frac{N_{n,1}}{m_n}$ has the same law as $ \frac{Z_1}{m_0}$, we obtain
\begin{equation}
\label{majoration eta_n}
\mathbb{E}|\eta_n|^p  \leq 2^p \mathbb{E} \left| \frac{Z_1}{m_0} -1 \right|^p \mathbb{E}\left[Z_n^{1-p} \right] .
\end{equation}
We shall give a bound of the harmonic moment $\mathbb{E}Z_n^{1-p}$. By (\ref{relation recurrence Zn}), using the convexity of the function $x \mapsto x^{1-p}$ and the independence between the r.v.'s $Z_n$ and $N_{n,i} \ ( i \geq 1)$, we get
\begin{eqnarray*}
\mathbb{E}\left[Z_{n+1}^{1-p}\right]&=&\mathbb{E}\left[ \left(\sum_{i=1}^{Z_{n}} N_{n,i}\right)^{1-p}\right] \nonumber \\
&\leq& \mathbb{E} \left[ Z_n^{1-p} \frac{1}{Z_n} \left(\sum_{i=1}^{Z_{n}} N_{n,i}^{1-p} \right) \right] \nonumber\\
&\leq& \mathbb{E} \left[ \mathbb{E} \left( Z_n^{1-p} \frac{1}{Z_n} \left(\sum_{i=1}^{Z_{n}} N_{n,i}^{1-p} \right)  \Bigg| Z_n \right) \right] \nonumber\\
&=& \mathbb{E} \left[ Z_n^{1-p}\right] \mathbb{E} \left[ N_{n,1}^{1-p}\right] \nonumber\\
&=& \mathbb{E} \left[ Z_n^{1-p}\right] \mathbb{E} \left[ Z_1^{1-p}\right].
\end{eqnarray*}
By induction, we obtain
$$
\mathbb{E}\left[Z_{n+1}^{1-p}\right] \leq \left(\mathbb{E}  Z_1^{1-p} \right)^{n+1}. $$
By (\ref{p0}), we have $\mathbb{E}  Z_1^{1-p} < 1$. So the above inequality \eqref{majoration eta_n} gives
(\ref{MZ eta_n}) with $C=2 \left( \mathbb{E} \left| \frac{Z_1}{m_0} -1 \right|^p \right)^{1/p} < \infty$ and $ \delta = \left( \mathbb{E}  Z_1^{1-p} \right)^{1/p} <1$.

Now we prove (\ref{exp conv log W 2}). Let $K \in (0,1)$. Using the decomposition \eqref{decomposition log Wn eta_n} and a standard truncation, we have
\begin{eqnarray} \label{A1 B1}
\mathbb{E} \left| \log W_{n+1} - \log W_n \right|
&=&
\mathbb{E} \left| \log \left( 1 +  \eta_{n} \right) \right| \mathds{1}( \eta_{n} \geq - K )
+ \mathbb{E} \left| \log \left( 1 +  \eta_{n} \right) \right| \mathds{1}(\eta_{n} < - K ) \nonumber \\
&=& A_n+ B_n.
\end{eqnarray}
We first find a bound for $A_n$. It is obvious that there exists a constant $C>0$ such that for all $x > -K$,  $| \ln (1+x) | \leq C |x|$. By (\ref{MZ eta_n}), we get
\begin{eqnarray}
\label{A_n}
A_n
\leq C \mathbb{E}|\eta_n|  
\leq C \left(\mathbb{E}|\eta_n|^p \right)^{1/p}   
\leq C \delta^{n} .
\end{eqnarray}
Now we find a bound for $B_n$.
Note that by (\ref{decomposition log Wn eta_n})
and Lemma \ref{moment log W}, we have, for any $r \in (0, q/2)$,
\begin{equation}
\label{majoration  sup E log (1+eta_n)}
\sup_{n \in \mathbb{N}} \ \mathbb{E} \left| \log (1+ \eta_n) \right|^{r} < \infty .
\end{equation}
Let $r,s>1$ be such that $ \frac{1}{s} + \frac{1}{r} =1$ and $r<q/2$. By H\"older's inequality, (\ref{majoration  sup E log (1+eta_n)}), Markov's inequality and (\ref{MZ eta_n}), we have
\begin{eqnarray}
\label{B_n}
B_n &\leq& \left( \mathbb{E} \left| \log \left( 1 + \eta_{n} \right) \right|^r \right)^{1/r} \mathbb{P} \left( \eta_{n} < -K \right)^{1/s} \nonumber \\
&\leq& C  \mathbb{P} \left( | \eta_{n}| > K \right)^{1/s} \nonumber \\
&\leq& C \left( \mathbb{E} |\eta_{n}|^p \right)^{1/s} \nonumber \\
&\leq & C \delta^{n} .
\end{eqnarray}
Thus by (\ref{A1 B1}), (\ref{A_n}) and (\ref{B_n}), there exist two constants $C>0$ and $\delta \in (0,1)$ such that
\begin{equation}
\mathbb{E} \left| \log W_{n+1} - \log W_n \right| \leq C \delta^n .
\end{equation}
Using the triangular inequality, we have for all $k \in \mathbb{N}$,
\begin{eqnarray*}
\mathbb{E} \left| \log W_{n+k} - \log W_n \right| &\leq& C \left(\delta^n + \ldots + \delta^{n+k-1} \right) \\
&\leq & \frac{C}{1- \delta} \delta^n.
\end{eqnarray*}
Letting $k \to \infty$, we get
\begin{equation*}
\mathbb{E} \left| \log W -  \log W_n \right| \leq \frac{C}{1- \delta} \delta^n,
\end{equation*}
which proves Lemma \ref{lemma logW cv exp}.
\end{proof}
We now prove a concentration inequality for the joint law of $(S_n, \log Z_n)$.
\begin{lemma}
\label{concentration lemma 1}
Assume \textbf{A1} and \textbf{A2}. Then for all $x \in \mathbb{R} $, we have
\begin{equation}  \label{concentration inequality 1}
\mathbb{P} \left( \frac{\log Z_n - n \mu}{\sigma \sqrt{n}} \leq x, \frac{S_n
- n\mu}{\sigma \sqrt{n}}\geq x \right) \leq \frac{C}{\sqrt{n}}
\end{equation}
and
\begin{equation}  \label{concentration inequality 1 bis}
\mathbb{P} \left( \frac{\log Z_n - n \mu}{\sigma \sqrt{n}} \geq x, \frac{S_n
- n\mu}{\sigma \sqrt{n}}\leq x \right) \leq \frac{C}{\sqrt{n}}.
\end{equation}
\end{lemma}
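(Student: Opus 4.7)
The plan is to exploit the fundamental decomposition $\log Z_n = S_n + \log W_n$ from \eqref{decom-logZn}. Setting $y = x \sigma \sqrt n + n\mu$, the event in \eqref{concentration inequality 1} becomes
\[
A_n := \{S_n + \log W_n \leq y,\ S_n \geq y\} = \{0 \leq S_n - y \leq -\log W_n\},
\]
which automatically forces $\log W_n \leq 0$. For a threshold $\eta_n > 0$ to be chosen I would use the basic split
\[
\mathbb P(A_n) \leq \mathbb P(y \leq S_n \leq y + \eta_n) + \mathbb P(-\log W_n > \eta_n).
\]

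For the first summand I would apply the classical Berry--Esseen theorem to the i.i.d.\ sum $S_n = X_1 + \cdots + X_n$, whose third-moment hypothesis is precisely furnished by \textbf{A1}. This gives $\sup_u |\mathbb P(S_n \leq u) - \Phi((u - n\mu)/(\sigma \sqrt n))| \leq C/\sqrt n$, hence $\mathbb P(y \leq S_n \leq y + \eta_n) \leq \eta_n/(\sigma \sqrt{2\pi n}) + 2C/\sqrt n$.

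For the tail $\mathbb P(-\log W_n > \eta_n)$ the plan is to combine Lemma \ref{moment log W}, which under \textbf{A1} yields the uniform bound $\sup_n \mathbb E |\log W_n|^q < \infty$ for all $q < (3+\varepsilon)/2$, with the exponential $L^1$-convergence from Lemma \ref{lemma logW cv exp} (whose hypotheses are implied by \textbf{A1}--\textbf{A2}): $\mathbb E |\log W_n - \log W| \leq C \delta^n$ lets me replace the tail of $\log W_n$ by that of $\log W$ up to a super-exponentially small error, and the latter is then estimated by Markov's inequality. The symmetric inequality \eqref{concentration inequality 1 bis} is treated in exactly the same way: the event becomes $\{0 \leq y - S_n \leq \log W_n\}$ (which forces $W_n \geq 1$), and the relevant tail is the upper tail $\mathbb P(\log W_n > \eta_n)$.

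The hard part is the delicate calibration of $\eta_n$ so that both contributions come out simultaneously $O(n^{-1/2})$: a naive balancing of the two estimates produces only the sub-optimal rate $n^{-q/(2(q+1))}$, strictly worse than $n^{-1/2}$ for the moment orders permitted by \textbf{A1} alone. The essential ingredient for closing the gap is the exponential $L^1$-proximity of $\log W_n$ to $\log W$ from Lemma \ref{lemma logW cv exp}, which turns a polynomial-in-$n$ loss on the tail of $\log W_n$ into a negligible super-exponential one, so that the moment estimate effectively has to be applied to $\log W$ only, for a judicious choice of $\eta_n$.
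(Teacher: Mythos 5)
Your reduction of the event to $\{0 \le S_n - y \le -\log W_n\}$ and the split at a deterministic threshold $\eta_n$ contain a genuine gap, and it sits exactly at the point you flag as ``the hard part''. The step you propose to close it --- replacing the tail of $\log W_n$ by that of $\log W$ via Lemma \ref{lemma logW cv exp} --- does not help: that lemma controls only the difference $\log W_n - \log W$, whereas the obstruction is the tail of $\log W$ itself. Under \textbf{A1}, Lemma \ref{moment log W} yields $\mathbb{E}|\log W|^q < \infty$ only for $q < (3+\varepsilon)/2$, so Markov's inequality gives $\mathbb{P}(-\log W > \eta_n) \le C\eta_n^{-q}$, which is polynomial in $\eta_n$ exactly as for $\log W_n$. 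To make this $O(n^{-1/2})$ you need $\eta_n \ge c\, n^{1/(2q)}$, and then the Berry--Esseen term $\eta_n/(\sigma\sqrt{2\pi n})$ is of order $n^{1/(2q)-1/2}$, which is much larger than $n^{-1/2}$. No choice of $\eta_n$ reconciles the two requirements; the optimum remains the sub-optimal rate $n^{-q/(2(q+1))}$ you yourself computed, and passing from $\log W_n$ to $\log W$ changes nothing because both variables have the same, merely polynomial, tail bounds.

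The missing idea is to avoid a worst-case threshold altogether and instead \emph{average} the length of the interval into which $S_n$ must fall. The paper splits $S_n = S_m + (S_n - S_m)$ with $m = [\sqrt n\,]$, first replaces $\log W_n$ by $\log W_m$ (this is where Lemma \ref{lemma logW cv exp} enters, at a super-exponentially small cost $O(\delta^{\sqrt n})$), and then conditions on $(S_m, \log W_m)$, which is independent of the sum $S_n - S_m$. Given $(S_m,\log W_m)=(s,t)$, the Berry--Esseen theorem applied to $S_n - S_m$ bounds the conditional probability that it lands in an interval of length $|t|/(\sigma\sqrt n) + O(n^{-1/2})$ by that length plus $C/\sqrt n$; integrating against the joint law then produces $\mathbb{E}|\log W_m|/(\sigma\sqrt n) + C/\sqrt n$, so that only the first moment $\sup_m \mathbb{E}|\log W_m| < \infty$ from Lemma \ref{moment log W} is needed, and the factor $n^{-1/2}$ comes from the normalization of $\log W_m$, not from its tail decay. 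A threshold-type argument of the kind you propose could only work with exponential tails for $\log W$ (i.e.\ negative moments of $W$, Theorem \ref{harmonic moment}), which require \textbf{A3} and are not available under \textbf{A1}--\textbf{A2}; even then it would yield only $O(\log n/\sqrt n)$.
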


Before giving  the proof of Lemma \ref{concentration lemma 1},  let us
give some heuristics of the proof,  following Kozlov \cite{kozlov2006large}.
By (\ref{eta_n}), we can write
\begin{equation}
W_{n+1} = W_n \times \left( Z_n^{-1} \sum_{i=1}^{Z_n} \frac{N_{n,i}}{m_n} \right).
\end{equation}
Since $Z_n \rightarrow \infty$, by the law of large numbers, $ Z_n^{-1} \sum_{i=1}^{Z_n} \frac{N_{n,i}}{m_n}$ 
is close to $1$, and then $W_{n+1}/W_n$ is also close to $1$ when $n$ is large enough.
Therefore we can hope to replace $\log W_n$ by $\log W_m$ without loosing too much, when $m=m(n)$ is an increasing subsequence of integers such that $m/n \to 0$.
Denote
\begin{equation}
Y_{m,n}=\sum_{i=m+1}^{n}\frac{X_{i} -\mu }{\sigma\sqrt{n}},
\quad   Y_{n} = Y_{0,n}   \quad \text{and}\quad
V_{m} = \frac{\log W_{m}} { \sigma \sqrt{n}}.  \label{Yn Vn}
\end{equation}
Then, the
independence between $Y_{m,n}$ and $(Y_{m},
V_m)$ allows us to use a Berry-Esseen approximation on $%
Y_{m,n}$ to get the result.
\begin{proof}[Proof of Lemma \ref{concentration lemma 1}]
We first prove (\ref{concentration inequality 1}).
Let
$\alpha_{n}=\frac{1}{\sqrt{n}}$ and  $m=m(n)=\left[ n^{1/2} \right]$, where $[x]$ stands for the integer part of $x$. 
Let $D_{m}=V_{n}-V_{m}$. By a standard truncation, using Markov's inequality and Lemma \ref{lemma logW cv exp}, there exists $\delta \in (0,1)$ such that
\begin{eqnarray}
\mathbb{P}\left( Y_{n}+ V_n \leq x,Y_{n} \geq x\right)
&\leq & \mathbb{P}\left( Y_{n}+ V_m  \leq x + \alpha_n  ,Y_{n} \geq x \right)
+\mathbb{P}\left( |D_{m}|>\alpha _{n}\right) \notag \\
&\leq& \mathbb{P}\left( Y_{n} + V_m \leq x + \alpha_n ,Y_{n} \geq x \right) + \delta^{m}.
\label{ppp002}
\end{eqnarray}%
Now we find a bound for the right-hand side of (\ref{ppp002}).
Obviously we have the decomposition
\begin{equation}
Y_{n} = Y_{m} + Y_{m,n}.  \label{Yn + Ym}
\end{equation}%
For $x \in \mathbb R$, let
$G_{m,n}(x)=\mathbb{P}\left( Y_{m,n}\leq x\right) $ and $G_n(x) = G_{0,n}(x)$.  Denote by
$\nu_m (ds, dt) = \mathbb{P} \left( Y_{m}\in ds,V_{m}\in dt\right)$ the joint law of $(Y_m, V_m)$.
By conditioning and using the independence between $Y_{m,n}$ and $(Y_{m},V_{m})$, we have
\begin{eqnarray}
\label{kozlov11}
&&\mathbb{P}\left( Y_{n}+V_m \leq x + \alpha_n,Y_{n}\geq x \right)   \notag  \\
&=&\mathbb{P}\left( Y_{m,n}+Y_{m} +V_m \leq x+ \alpha_n, Y_{m,n}+Y_{m} \geq x \right)   \notag \\
&=&\int \mathbb{P}\left( Y_{m,n}+s+t \leq x+ \alpha_n,Y_{m,n}+s \geq x \right)
\nu_m (ds, dt)   \notag \\
&=&\int \mathds{1}(t \leq \alpha _{n})\left( G_{m,n}(x-s-t + \alpha_n)-G_{m,n}(x-s)\right)  \nu_m (ds, dt).
\end{eqnarray}%
For the terms $G_{m,n}(x-s-t + \alpha_n)$ and $G_{m,n}(x-s)$ we are going to use the normal approximation using the Berry-Esseen theorem.
Since $ (1-x)^{-1/2} = 1 + \frac{x}{2} + o(x) \; (x\rightarrow 0)$, we have $\frac{\sqrt{n}}{\sqrt{n-m}}= (1- \frac{m}{n})^{-1/2} =  1 + R_n$, where $0\leq  R_n  \leq C / \sqrt{n} $ and $n\geq 2$.
Therefore, we obtain
  $$ G_{m,n} (x) = \mathbb{P}\left( \sum_{i=m+1}^{n}\frac{X_{i} -\mu }{\sigma\sqrt{n }} \leq x \right) = G_{n-m} \left( \frac{x \sqrt{n} }{\sqrt{n-m}}  \right) = G_{n-m} (x (1+R_n)). $$
Furthermore, by the mean value theorem, we have
\begin{equation}
\label{mean value Phi}
\left|  \Phi (x (1+ R_n) )  - \Phi (x) \right|  \leq  R_n |x \Phi' (x)|  \leq \frac{ R_n e^{-1/2}}{ 2\pi } \leq \frac{C}{\sqrt{n}},
\end{equation}
where we have used the fact that
the function $x \mapsto x \Phi' (x) = x e^{-x^2/2}/ \sqrt{2 \pi} $ attains its maximum at $x = \pm 1$.
Therefore, by the Berry-Esseen theorem, we have for all $x\in \mathbb R$,
\begin{eqnarray}
\label{Berry-Esseen uniforme Gmn2}
\left| G_{m,n}(x) - \Phi (x)  \right|
 &\leq& \left| G_{n- m}(x (1+R_n)) - \Phi (x (1+ R_n) )  \right| + \left|  \Phi (x (1+ R_n) )  - \Phi (x) \right|\notag \\
&\leq& \frac{C}{\sqrt{n}}.
\end{eqnarray}%
From  this and \eqref{kozlov11},
we get
\begin{eqnarray}
& & \mathbb{P} \left(Y_n  + V_m \leq x + \alpha_n , Y_n \geq x  \right)  \nonumber \\
&\leq& \int \mathds{1}(t \leq \alpha _{n}) \; \left| \Phi (x-s-t + \alpha_n)-\Phi (x-s)\right| \; \nu_m (ds, dt)
  + \frac{C}{\sqrt{n}}.
\label{kozlov21}
\end{eqnarray}%
Using again the mean value theorem and the fact that  $| \Phi' (x)|  \leq 1$, we obtain
\begin{equation} \label{eq-phi-diff}
| \Phi (x-s-t + \alpha_n ) - \Phi (x-s)| = |-t + \alpha_n| \leq |t|  + \frac{1}{\sqrt n}.
\end{equation}
Moreover, by Lemma \ref{moment log W} and the definition of $\nu_m$, we have 
\begin{equation}
\label{E V_m}
\int  |t| \  \nu_m (ds, dt) = \frac{\mathbb{E} | \log W_m |}{\sigma \sqrt{n}} \leq \frac{C}{\sqrt{n}}.
\end{equation}
Hence, from  \eqref{kozlov21} and \eqref{eq-phi-diff}, we get
\begin{eqnarray*}
 \mathbb{P} \left(Y_n  + V_m \leq x+ \alpha_n, Y_n \geq x  \right)
&\leq& \frac{C}{\sqrt{n}}.
\end{eqnarray*}
Implementing this bound into (\ref{ppp002}) gives (\ref{concentration inequality 1}). The inequality
 (\ref{concentration inequality 1 bis})  is obtained in the same way.
\end{proof}

\subsection{Proof of Theorem \protect\ref{BE theorem for BPRE}}
\label{SecBE3}
In this section we prove a Berry-Esseen bound for $\log Z_n$ using Stein's method.
In order to simplify the notational burden, let
\begin{equation*}
Y_n = \frac{1}{\sigma \sqrt{n}} \sum_{i=1}^n
\; (X_i- \mu) , \ \ V_n= \frac{\log W_n}{\sigma \sqrt{n}} \quad \text{and} \quad \tilde{Y}_n=
\frac{ \log Z_n - n \mu }{\sigma \sqrt{n}} = Y_n + V_n.
\end{equation*}
By (\ref{distance kolmorogov stein}), it is enough to find a suitable bound of Stein's
expectation
\begin{equation}  \label{stein}
| \mathbb{E} [ \mathnormal{f}_x ^{\prime }(\tilde{Y}_n) - \tilde{Y}_n \mathnormal{f}_x (\tilde{Y}_n) ] |,
\end{equation}
where $x \in \mathbb{R}$ and $\mathnormal{f}_x $ is the unique bounded solution of Stein's equation (\ref{stein equation}). 
For simplicity, in the following  we  write $f$ for $f_x$.
By the triangular inequality, we have
\begin{eqnarray}  \label{steinb0}
| \mathbb{E} [ f^{\prime }(\tilde{Y}_n) - \tilde{Y}_n \mathnormal{f} (\tilde{Y}_n) ] | &\leq& 
| \mathbb{E} [ f^{\prime }(\tilde{Y}_n) - Y_n f (Y_n) ) ] | \nonumber\\
&& + | \mathbb{E} [ Y_n f (Y_n) - Y_n\mathnormal{f} (\tilde{Y}_n) ] | + | \mathbb{E} [ V_n
\mathnormal{f} (\tilde{Y}_n) ] |.
\end{eqnarray}
By \textbf{A1} and Lemma \ref{moment log W}, we have $\underset{n}{\sup}\ \mathbb{E} \left| \log
W_n \right|^{3/2} < \infty $. Therefore, by the definition of $V_n$, we have
\begin{equation}  \label{steinb1}
| \mathbb{E} [ V_n f(\tilde{Y}_n) ] | \leq \frac{\| \mathnormal{f} \|}{\sqrt{n}} \
\underset{n}{\sup} \ \mathbb{E} | \log W_n | \leq \frac{C}{\sqrt{n}}.
\end{equation}
Moreover, using the fact that $f$ is a Lipschitz function with $\|f' \| \leq 1$, together with H\"older's inequality and Lemma \ref{lemma MZ}, we get 
\begin{eqnarray}
\label{steinb3}
| \mathbb{E} [ Y_n f (Y_n) - Y_n
\mathnormal{f} (\tilde{Y}_n) ] | & \leq &
\mathbb{E} [ |Y_n| \ | \mathnormal{f} (\tilde{Y}_n) - \mathnormal{f} (Y_n)| ] \nonumber \\
& \leq & \| \mathnormal{f}^{\prime } \| \ \mathbb{E} \left[ \left| Y_n \right| \
\left| V_n \right| \right] \nonumber \\
&\leq&   \frac{1}{n}  \left[ \mathbb{%
E} \left| \sum_{i=1}^{n} (X_i - \mu) \right|^3 \right]^{1/3}  \left[ \mathbb{E}
\; \left| \log W_n \right|^{3/2} \right]^{2/3} \nonumber \\
&\leq&
 \frac{C}{n}  \left( B_3^3  \; \mathbb{E} |X_1-\mu|^3 n^{3/2} \right)^{1/3} \nonumber \\
&\leq& \frac{C }{\sqrt{n}}.
\end{eqnarray}
Again, by the triangular inequality, we have
\begin{eqnarray}  \label{steinb4}
| \mathbb{E} [ \mathnormal{f} ^{\prime }(\tilde{Y}_n) - Y_n \mathnormal{f} (Y_n) ] | 
 &\leq&  | \mathbb{E} [ \mathnormal{f}^{\prime }(Y_n+V_n) - \mathnormal{f}%
^{\prime }(Y_n) ] |  \nonumber\\
&& + | \mathbb{E} [ \mathnormal{f}^{\prime }(Y_n) - Y_n
\mathnormal{f} (Y_n) ] |.
\end{eqnarray}
Applying (\ref{majoration fx'}) for $w=Y_n$, $s=V_n$ and $t=0$, we get
\begin{eqnarray*}
| \mathbb{E} [ \mathnormal{f}^{\prime }(Y_n+V_n) - \mathnormal{f} ^{\prime }(Y_n)
] | &\leq& \mathbb{E} \left( | Y_n | |V_n| \right) +  \mathbb{E} | V_n | + \mathbb{P}
\left( Y_n + V_n \leq x, Y_n \geq x \right) \\
&& + \mathbb{P} \left( Y_n + V_n \geq x, Y_n
\leq x \right).
\end{eqnarray*}
As for (\ref{steinb1}) and (\ref{steinb3}), we have $ \mathbb{E} | V_n | \leq \frac{C}{\sqrt{n}}$ and $\mathbb{E} \left( | Y_n | \; |V_n| \right) \leq \frac{C}{\sqrt{n}}.$
From these bounds and the concentration inequalities of Lemma \ref{concentration lemma 1}, we have
\begin{equation}  \label{steinb5}
| \mathbb{E} [ \mathnormal{f}^{\prime }(Y_n+V_n) - \mathnormal{f} ^{\prime }(Y_n) ]
| \leq \frac{C}{\sqrt{n}}.
\end{equation}
Furthermore, since $Y_n$ is a sum of i.i.d.\ random variables, by Lemma \ref{lem borne eq stein}, it follows that
\begin{equation}
\label{steinb6}
| \mathbb{E} [ \mathnormal{f}^{\prime }(Y_n) - Y \mathnormal{f} (Y_n) ] | \leq \frac{C}{\sqrt{n}}.
\end{equation}
Thus, coming back to (\ref{steinb0}) and using the bounds (\ref{steinb1}), (\ref{steinb3}), (\ref{steinb4}), \eqref{steinb5} and \eqref{steinb6}, we get
\begin{equation*}
| \mathbb{E} [ \mathnormal{f} ^{\prime }(\tilde{Y}_n) - \tilde{Y}_n \mathnormal{f} (\tilde{Y}_n) ] | \leq \frac{C}{\sqrt{n}},
\end{equation*}
which ends the proof of Theorem \ref{BE theorem for BPRE}.

\section{Harmonic moments of $W$}
\label{Sec Harm Mom}
In this section, we study the existence of harmonic moments of the random variable $W$. Section \ref{sec harmonic moment 1} is devoted to the proof of Theorem \ref{harmonic moment}.
For the needs of Cram\'er's type large deviations, in Section \ref{sec harmonic moment 2} 
 we  shall prove the existence of the harmonic moments of $W$ under the changed probability measure, 
 which  generalizes the result of Theorem  \ref{harmonic moment}.

\subsection{Existence of harmonic moments under  $\mathbb{P}$}
\label{sec harmonic moment 1}
Following the line of Lemma \ref{moment log W} we prove Theorem \ref{harmonic moment} by studying the asymptotic behavior of the Laplace transform of $W$.
Actually Theorem \ref{harmonic moment} is a simple consequence of Theorem \ref{Laplace transform of W} below.
Recall that
\begin{equation*}
\phi_{\xi} (t) = \mathbb{E}_{\xi} e^{-t W} \ \ \ \text{and} \ \ \ \phi (t) =
\mathbb{E} \phi_{\xi} (t) = \mathbb{E} e^{-t W}, \quad t \geq 0.
\end{equation*}

\begin{theorem}
\label{Laplace transform of W} Assume condition \textbf{A3}.
Let $a_0>0$ be defined by \eqref{Def a0}.
Then for any $a \in (0,a_0) $,  there exists a constant $C>0$  such that for all $t > 0$,
\begin{equation*}
\phi (t) \leq C t^{-a}.
\end{equation*}
In particular $\mathbb{E} W^{- a } < \infty $ for all $a \in (0,a_0)$.

\end{theorem}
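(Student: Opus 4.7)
The plan is to iterate the functional equation for $\phi_\xi$ exactly as in the proof of Lemma \ref{moment log W}, but upgrade the control of the deviation event $\{\Pi_n > A^n\}$ from Marcinkiewicz--Zygmund to an exponential Chebyshev bound, now available under \textbf{A3}; this replaces the $1/n^p$ decay of Lemma \ref{moment log W} by exponential-in-$n$ decay, which translates into a polynomial-in-$t$ bound for $\phi(t)$.

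Following the derivation leading to \eqref{alpha0}, we arrive at the intermediate inequality
$$
\phi(t) \leq \bigl[\mathbb{E}p_1(\xi_0) + (1-\mathbb{E}p_1(\xi_0))\phi(t/A^n)\bigr]^n + \mathbb{P}(\Pi_n > A^n),
$$
valid for any $A > 1$ and $n \geq 1$. Fixing $\gamma \in (0,1)$ and using $\lim_{s\to\infty}\phi(s) = 0$, pick $K$ with $\phi(s) \leq \gamma$ for $s \geq K$; then for $t \geq KA^n$,
$$
\phi(t) \leq \alpha^n + \mathbb{P}(\Pi_n > A^n), \qquad \alpha := \mathbb{E}p_1(\xi_0) + (1-\mathbb{E}p_1(\xi_0))\gamma \in (0,1),
$$
the strict bound $\alpha < 1$ coming from \eqref{EP_1}. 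Under \textbf{A3}, the exponential Chebyshev bound with parameter $\lambda_0$ gives, for any $A$ with $\log A > \psi(\lambda_0)/\lambda_0$,
$$
\mathbb{P}(\Pi_n > A^n) = \mathbb{P}(S_n > n\log A) \leq \exp\bigl\{-n\bigl(\lambda_0\log A - \psi(\lambda_0)\bigr)\bigr\}.
$$

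Next, set $n = n(t) = \lfloor \log(t/K)/\log A\rfloor$, so that $t \geq KA^n$ and $n \sim \log t/\log A$; both terms become polynomial in $t$:
$$
\phi(t) \leq C\,t^{-b_1(\gamma,A)} + C\,t^{-b_2(A)}, \qquad b_1 = \frac{|\log\alpha|}{\log A}, \quad b_2 = \lambda_0 - \frac{\psi(\lambda_0)}{\log A},
$$
for $t \geq K$, and this bound extends to all $t>0$ by enlarging $C$ and using $\phi \leq 1$. Since $b_1$ is strictly decreasing and $b_2$ strictly increasing in $\log A$, the effective rate $\min(b_1, b_2)$ is maximized at $b_1 = b_2$, with common value $\lambda_0/(1 - \psi(\lambda_0)/\log\alpha)$. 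When $\mathbb{E}p_1(\xi_0) > 0$, letting $\gamma \downarrow 0$ gives $\alpha \downarrow \mathbb{E}p_1(\xi_0)$, and this common rate tends to $\lambda_0/(1 - \log\mathbb{E}m_0^{\lambda_0}/\log\mathbb{E}p_1(\xi_0)) = a_0$, matching \eqref{Def a0}. When $\mathbb{E}p_1(\xi_0) = 0$, the factor $\alpha = \gamma$ can be made arbitrarily small, so $b_1$ is non-binding and letting $\log A \to \infty$ drives $b_2 \to \lambda_0 = a_0$. Thus for any $a \in (0, a_0)$, one chooses $(\gamma, A)$ with $\min(b_1, b_2) > a$, obtaining $\phi(t) \leq C t^{-a}$ uniformly in $t > 0$. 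The conclusion $\mathbb{E}W^{-a} < \infty$ in Theorem \ref{harmonic moment} then follows from the representation $\mathbb{E}W^{-a} = \Gamma(a)^{-1}\int_0^\infty t^{a-1}\phi(t)\,dt$ applied with some $a' \in (a, a_0)$ to ensure integrability at infinity.

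I expect the main technical obstacle to be the joint optimization $(\gamma, A) \mapsto \min(b_1, b_2)$ leading precisely to the constant $a_0$: one must carefully track the two competing rates through the limit $\gamma \downarrow 0$, verify that the max-min reproduces the formula \eqref{Def a0}, and handle the limiting case $\mathbb{E}p_1(\xi_0) = 0$ cleanly, where one of the two bounds effectively deactivates, while preserving the uniform polynomial bound in $t$.
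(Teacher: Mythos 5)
Your proposal is correct and follows essentially the same route as the paper: iterate the functional equation to get $\phi(t)\leq \alpha^n+\mathbb{P}(\Pi_n\geq A^n)$, bound the deviation probability by the exponential Chebyshev inequality under \textbf{A3}, and convert the geometric decay in $n$ into a power decay in $t$ via $n\sim\log t/\log A$. The only (cosmetic) difference is that you optimize $\min(b_1,b_2)$ over $A$ and find the optimum at $b_1=b_2$, whereas the paper directly chooses $A=(\mathbb{E}m_0^{\lambda_0}/\alpha)^{1/\lambda_0}$, which is exactly that balance point.
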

\begin{proof}
By (\ref{alpha0}), we have for $A>1$ and $ t \geq K A^n$,
\begin{equation}
\phi (t) \leq \alpha^n + \mathbb{P}( \Pi_n \geq A^n ) ,
\label{alpha01}
\end{equation}
where
\begin{equation}
\alpha = \mathbb{E} p_1 (\xi) + (1-  \mathbb{E} p_1 ( \xi) ) \gamma  \in (0,1).
\label{alpha1}
\end{equation}
Using Markov's inequality and condition \textbf{A3}, there exists $\lambda_0 > 0 $ such that
\[ \mathbb{P} ( \Pi_n \geq A^n) \leq \frac{\mathbb{E} \Pi_n^{\lambda_0}}{A^{n \lambda_0}} = \left( \frac{\mathbb{E} m_0^{\lambda_0}}{A^{\lambda_0}} \right)^n . \]
Setting $ A = \left(\frac{\mathbb{E} m_0^{\lambda_0}}{\alpha} \right)^{1/ \lambda_0} >1, $ we get for any $ n \in \mathbb{N} $ and $ t \geq K A^n$,
\begin{equation}
 \phi (t) \leq 2 \alpha^n.
\label{bbb001}
\end{equation}
Now, for any  $t \geq K$, define $n_0 = n_0 (t) = \left[ \frac{\log (t/K)}{\log A } \right] \geq 0$, where $ [x]$ stands for the integer part of $x$, so that
\[  \frac{\log (t/K)}{\log A} - 1 \leq n_0 \leq \frac{\log (t/K)}{\log A} \ \  \text{and} \ \  t \geq K A^{n_0}  .\]
Then, for $t \geq K$,
\[\phi(t) \leq 2 \alpha^{n_0} \leq 2 \alpha^{-1} (t/K)^{\frac{\log \alpha }{\log A }} = C_0 t^{-a}, \]
with $C_0 =  2 \alpha^{-1} K^{a}$ and $ a = - \frac{\log \alpha }{\log A } > 0 $.
Thus we can choose a constant $C >0$ large enough such that for all $t > 0$,
\begin{equation}
\label{majoration phi}
\phi (t) \leq C t^{-a}.
\end{equation}
This proves the first inequality of Theorem \ref{Laplace transform of W}.
The existence of harmonic moments of $W$ of order $s \in (0,a)$ is deduced from \eqref{majoration phi} and the fact that 
\[
\e W^{-s} = \frac{1}{\Gamma (s)} \int_0^{+ \infty}  \phi (t)  t^{s-1} dt,
\]
where $\Gamma$ is the Gamma function. 

Now we prove (ii). By the definition of $a$, $A$ and $\alpha$, we have
\begin{eqnarray*}
a &=& - \lambda_0 \frac{ \log \alpha}{\log \mathbb{E} m_0^{\lambda_0} -
\log \alpha} \\
&=& - \lambda_0 \frac{ \log \left( \mathbb{E} p_1 + (1- \mathbb{E} p_1 )
\gamma \right) }{\log \mathbb{E} m_0^{\lambda_0} - \log \left( \mathbb{E}
p_1 + (1- \mathbb{E} p_1 ) \gamma \right) },
\end{eqnarray*}
where $\gamma \in (0,1)$ is an arbitrary constant.
Since $a \to a_0$ as $\gamma \to 0$, this concludes the proof of Theorem \ref{Laplace transform of W}.
\end{proof}

\subsection{Existence of  harmonic moments under $\mathbb{P}_{\lambda}$\label{sec harmonic moment 2}}

In this section, we establish a uniform bound for the harmonic moments of $W$ under the probability measures
$\mathbb{P}_{\lambda}$, uniformly in $\lambda \in [0,\lambda_0] $.

Let $m(x) = \mathbb{E}[Z_1 | \xi_0 = x] = \sum_{k=1}^{\infty} k p_k (x) $.
By \textbf{A3}, for all $\lambda \leq  \lambda_0$, we can define the conjugate distribution function $\tau_{0, \lambda}$ as
\begin{equation}  \label{tau tilde}
\tau_{0,\lambda} (dx) = \frac{m(x)^\lambda}{ L(\lambda) } {\ \tau_0 (dx)}.
\end{equation}
Note that (\ref{tau tilde}) is just Cram\'er's change of measure for the associated random walk $(X_n)_{n \geq 1
}$.
Consider the new branching process in a random environment whose environment
distribution is $\tau_{\lambda} = \tau_{0, \lambda}^{\otimes \mathbb{N}}$.
The corresponding annealed probability and expectation are denoted by
\begin{equation}
\mathbb{P}_{\lambda}(dx,d\xi) = \mathbb{P}_{\xi}(dx) {\tau}_{\lambda} (d\xi)
\label{Def Plambda}
\end{equation}
and
$\mathbb{E}_{\lambda} $ respectively.
Note that, for any $\mathcal{F}_n$-measurable random variable $T$, we have
\begin{equation}
\mathbb{E}_{\lambda} T  = \frac{\mathbb{E} e^{ \lambda S_n} T }{L \left(\lambda \right)^n}.
\label{Def chmt measure}
\end{equation}
It is easily seen that under $ \mathbb{P}_{\lambda}$,   the process $(Z_n)$  is still a supercritical branching process in a random environment, 
which verifies the condition (\ref{p0}), and that
 $(W_n)_{n \in \mathbb{N}}$ is still a non-negative martingale which converges a.s.\ to $W$.
We shall show under the additional assumption \textbf{A4} that there exists a constant $a >0$ such that for all $b \in (0,a)$,
\[ \sup_{ 0 \leq  \lambda \leq \lambda_0 } \mathbb{E}_{\lambda} W^{-b} < \infty . \]
Denote the Laplace transforms of $W$ under  $\mathbb{P}_{\lambda}$  by
\begin{equation*}
\phi_{\lambda} (t) = \mathbb{E}_{\lambda } \phi_{\xi} (t) = \mathbb{E}_{\lambda} e^{-t W},
\end{equation*}
where $t \geq 0$ and $ \lambda  \leq \lambda_0$. The following  theorem gives a  bound on $ \phi_{\lambda} (t)$
and $ \mathbb{E}_{\lambda} W^{- a }$ uniformly in $\lambda \in [0, \lambda_0]$.

\begin{theorem} \label{moment harmonique W uniforme} Assume conditions \textbf{A3} and \textbf{A4}.
Then there exist constants $a>0$ and $C>0$ such that  for all $t > 0$,
\begin{equation*}
\sup_{0 \leq \lambda \leq \lambda_0} \phi_{\lambda} (t) \leq C
t^{-a}.
\end{equation*}
In particular, we have $ \underset{0 \leq \lambda \leq \lambda_0}{\sup}  \mathbb{E}_{\lambda} W^{- b } < \infty $ for all $b \in (0,a)$.
\end{theorem}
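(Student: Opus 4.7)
My plan is to mimic the proof of Theorem \ref{Laplace transform of W}, carrying out every estimate uniformly in $\lambda \in [0, \lambda_0]$. Since the quenched functional relation (\ref{2.5}) holds $\tau$-a.s., it also holds $\tau_\lambda$-a.s., so iterating and truncating exactly as in (\ref{2.5a})--(\ref{2.6}) and then taking $\mathbb{E}_\lambda$ produces, using that under $\tau_\lambda = \tau_{0,\lambda}^{\otimes\mathbb{N}}$ the environment is still i.i.d.\ and $T^n\xi$ is still independent of $\sigma(\xi_0,\dots,\xi_{n-1})$, the tilted analogue of (\ref{alpha01}):
\begin{equation*}
\phi_\lambda(t) \leq \bigl[\bar p_\lambda + (1-\bar p_\lambda)\,\phi_\lambda(t/A^n)\bigr]^n + \mathbb{P}_\lambda(\Pi_n \geq A^n),
\qquad \bar p_\lambda := \mathbb{E}_\lambda p_1(\xi_0).
\end{equation*}

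I would then replace each of the three ingredients in (\ref{alpha01})--(\ref{bbb001}) by a uniform-in-$\lambda$ version. First, $\bar p^{*} := \sup_{\lambda} \bar p_\lambda < 1$ because $\lambda \mapsto \bar p_\lambda = \mathbb{E}[p_1(\xi_0)\, m_0^\lambda]/L(\lambda)$ is continuous on the compact interval $[0,\lambda_0]$ (dominated convergence using \textbf{A3}) and is strictly less than $1$ at every $\lambda$ thanks to $\mathbb{P}(p_1(\xi_0) < 1) > 0$, which is (\ref{EP_1}). Second, I would choose $A > 1$ large enough so that a Chernoff bound with a suitable small exponent $\theta > 0$ (using $\mathbb{E}_\lambda e^{\theta X} = L(\lambda+\theta)/L(\lambda)$), together with the continuity of $\lambda \mapsto L(\lambda)$ on $[0,\lambda_0]$ and compactness, yields $\sup_\lambda \mathbb{P}_\lambda(\Pi_n \geq A^n) \leq C\rho^n$ for some $\rho \in (0,1)$.

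The genuine obstacle is the third ingredient: replacing the pointwise statement $\phi(t) \to 0$ used to obtain (\ref{alpha0}) by a uniform-in-$\lambda$ version of the form \emph{there exist $K>0$ and $\gamma < 1$ with $\sup_\lambda \phi_\lambda(K) \leq \gamma$}. This is precisely where \textbf{A4} must enter. My plan is to first upgrade Lemma \ref{convergence Lp exponentielle} to a uniform version: from $\mathbb{E}_\lambda (Z_1/m_0)^{p'} = \mathbb{E}[(Z_1/m_0)^{p'} m_0^\lambda]/L(\lambda)$, H\"older's inequality combined with \textbf{A3} and \textbf{A4} shows $\sup_\lambda \mathbb{E}_\lambda (Z_1/m_0)^{p'} < \infty$ for some $p' > 1$, after which running the argument of Lemma \ref{convergence Lp exponentielle} under $\mathbb{P}_\lambda$ yields $\sup_\lambda \mathbb{E}_\lambda W^{p'} < \infty$ together with $\mathbb{E}_\lambda W = 1$ (the latter via uniform integrability of $(W_n)$). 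A reverse-H\"older estimate,
\begin{equation*}
1 = \mathbb{E}_\lambda W \leq \tfrac{1}{2} + \mathbb{P}_\lambda(W > \tfrac{1}{2})^{\,1 - 1/p'}\,(\mathbb{E}_\lambda W^{p'})^{1/p'},
\end{equation*}
then produces a uniform lower bound $\inf_\lambda \mathbb{P}_\lambda(W > 1/2) \geq c > 0$, whence $\phi_\lambda(K) \leq e^{-K/2} + 1 - c$, which is strictly below $1$ for $K$ large enough.

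With these three uniform ingredients in hand, set $\alpha := \bar p^{*} + (1-\bar p^{*})\gamma \in (0,1)$ and $\beta := \alpha \vee \rho$. The endgame of Theorem \ref{Laplace transform of W} then applies verbatim: for $t \geq K A^n$ we obtain $\sup_\lambda \phi_\lambda(t) \leq C \beta^n$, and choosing $n_0 = \lfloor \log(t/K)/\log A \rfloor$ converts this into $\sup_\lambda \phi_\lambda(t) \leq C t^{-a}$ with $a = -\log\beta/\log A > 0$. The uniform harmonic moment bound $\sup_\lambda \mathbb{E}_\lambda W^{-s} < \infty$ for $s \in (0,a)$ follows as in Theorem \ref{Laplace transform of W} from the Gamma-integral identity $\mathbb{E}_\lambda W^{-s} = \Gamma(s)^{-1}\int_0^{+\infty} \phi_\lambda(t)\,t^{s-1}\,dt$.
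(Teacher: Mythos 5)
Your proposal is correct in substance but follows a genuinely different route from the paper. You rerun the functional-equation iteration \eqref{2.5}--\eqref{2.6} directly under each tilted law $\mathbb{P}_\lambda$ and supply three uniform ingredients, of which the decisive new one is the uniform non-degeneracy $\sup_\lambda \phi_\lambda(K)\leq\gamma<1$, obtained from $\sup_\lambda\mathbb{E}_\lambda W^{p'}<\infty$, $\mathbb{E}_\lambda W=1$ and a reverse-H\"older (Paley--Zygmund type) lower bound on $\mathbb{P}_\lambda(W>1/2)$; this argument is sound and does not appear in the paper. The paper instead never re-derives the iteration under $\mathbb{P}_\lambda$: it truncates $\phi_\lambda(t)\leq e^{t\varepsilon^n}\mathbb{E}_\lambda e^{-tW_n}+\mathbb{P}_\lambda(|W_n-W|>\varepsilon^n)$, then truncates again on $\{S_n\leq cn\}$ so that the change-of-measure density $e^{\lambda S_n}/L(\lambda)^n$ is at most $e^{\lambda_0 cn}$, which transfers the estimate back to the untilted $\phi(t)\leq Ct^{-a}$ of Theorem \ref{Laplace transform of W}; the sub-exponential factor $e^{\lambda_0 cn}$ is then absorbed by evaluating along $t_n=\varepsilon^{-n}$ and taking $\lambda_0$ small. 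Your approach buys a self-contained tilted proof at the price of redoing the non-degeneracy step; the paper's buys a shorter proof at the price of an extra layer of truncation and a constraint on $\lambda_0$. One caveat you should make explicit: your Chernoff bound for $\mathbb{P}_\lambda(\Pi_n\geq A^n)$ uses $\mathbb{E}_\lambda e^{\theta X}=L(\lambda+\theta)/L(\lambda)$, which requires $\lambda+\theta\leq\lambda_0$ and therefore degenerates as $\lambda\to\lambda_0$; \textbf{A3} gives no integrability beyond $\lambda_0$. This is repaired by proving the statement on $[0,\lambda_0/2]$ with $\theta=\lambda_0/2$ (using $L(\lambda)\geq 1$ since $X>0$), i.e.\ by shrinking the interval --- which both you (via the requirement $\lambda_0\leq p'-1$ for the uniform $L^{p'}$ bound) and the paper (explicitly, in Lemma \ref{convergence Lp exponentielle uniforme} and in the condition $\beta_3=e^{\lambda_0 c}\varepsilon^a<1$, and in the bound $\mathbb{E}_\lambda e^{\lambda X}=e^{\psi(2\lambda)-\psi(\lambda)}$ which needs $\lambda\leq\lambda_0/2$) already do, so it is a presentational rather than a substantive gap.
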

For the proof of the previous theorem we need to control the exponential speed of convergence in $L^p$ of $W_n$ to $W$, uniformly under the class of probability measures $\l \bb{P}_{\lambda} \r _{0 \leq \lambda \leq \lambda_0}$.
\begin{lemma} \label{convergence Lp exponentielle uniforme}
Assume that \textbf{A3} holds for some $\lambda_0 >0 $, and
\textbf{A4} holds for some $p \in (1,2]$. Then for $\lambda_0 >0 $ small enough, there exist constants $C>0$ and $\delta_0 \in (0,1) $ such that, for all $n \geq 1$,
\begin{eqnarray*}
\sup_{0 \leq \lambda  \leq \lambda_0}  \left(\mathbb{E}_{\lambda} \left| W_n-W \right|^p \right)^{1/p}   \leq C \delta_0^n .
\end{eqnarray*}
\end{lemma}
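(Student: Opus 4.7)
The plan is to adapt the proof of the $L^p$-convergence of $W_n\to W$ under $\mathbb{P}$ (Lemma \ref{convergence Lp exponentielle}) to the tilted measures $\mathbb{P}_{\lambda}$, while carefully tracking uniformity in $\lambda$. First I would write, for any $n<m$,
$$
\bigl(\mathbb{E}_{\lambda}|W_m-W_n|^p\bigr)^{1/p} \leq \sum_{k=n}^{m-1}\bigl(\mathbb{E}_{\lambda}|W_{k+1}-W_k|^p\bigr)^{1/p}
$$
by Minkowski's inequality, so the task reduces to producing a geometric bound $\mathbb{E}_{\lambda}|W_{k+1}-W_k|^p\leq C\delta_1^{\,k}$ with $C$ and $\delta_1<1$ both independent of $\lambda\in[0,\lambda_0]$.

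Using the martingale increment formula \eqref{difference Wn+1-Wn}, and noting that conditionally on $\mathcal{F}_k$ the variables $N_{k,i}/m_k-1$ are i.i.d.\ centered and independent of $Z_k$, Marcinkiewicz--Zygmund's inequality (Lemma \ref{lemma MZ} with $B_p=2$ for $p\in(1,2]$) yields
$$
\mathbb{E}\bigl[\,|W_{k+1}-W_k|^p\,\bigl|\,\mathcal{F}_k\bigr]\leq \frac{2^p Z_k}{\Pi_k^p}\,\mathbb{E}_{\xi}\Bigl|\tfrac{N_{k,1}}{m_k}-1\Bigr|^p.
$$
Under $\mathbb{P}_{\lambda}$ the environments $\xi_i$ remain i.i.d.\ with marginal $\tau_{0,\lambda}$, so $\xi_k$ is independent of $(Z_k,\Pi_k)$ and the expectation factorises:
$$
\mathbb{E}_{\lambda}|W_{k+1}-W_k|^p \leq 2^p\,\mathbb{E}_{\lambda}\!\left[\tfrac{Z_k}{\Pi_k^p}\right]\,\mathbb{E}_{\lambda}\Bigl|\tfrac{Z_1}{m_0}-1\Bigr|^p.
$$

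The crucial step is the exponential decay of the first factor. Since $Z_k=W_k\Pi_k$ and the quenched martingale property $\mathbb{E}_{\lambda}[W_k\mid\xi]=1$ is unaffected by the tilt, the change-of-measure identity \eqref{Def chmt measure} gives
$$
\mathbb{E}_{\lambda}\!\left[\tfrac{Z_k}{\Pi_k^p}\right] = \mathbb{E}_{\lambda}\bigl[e^{(1-p)S_k}\bigr] = \left(\frac{L(\lambda-(p-1))}{L(\lambda)}\right)^{\!k} =:\, r(\lambda)^k.
$$
At $\lambda=0$, $r(0)=\mathbb{E}\,m_0^{-(p-1)}<1$ because $m_0\geq 1$ a.s.\ (by \eqref{p0}) and, by non-degeneracy of $X$, $m_0>1$ on a set of positive probability. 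By continuity of $r$, shrinking $\lambda_0$ we obtain $\delta_1:=\sup_{\lambda\in[0,\lambda_0]}r(\lambda)<1$. For the second factor, the bound $|Z_1/m_0-1|^p\leq 2^p(Z_1^p/m_0^p+1)$, the change of measure, and $L(\lambda)\geq L(0)=1$ reduce everything to $\mathbb{E}[m_0^{\lambda-p}Z_1^p]$; choosing $\lambda_0\leq p-1$ gives $m_0^{\lambda-p}\leq m_0^{-1}$ (since $m_0\geq 1$), and condition \textbf{A4} ensures $\sup_{\lambda\in[0,\lambda_0]}\mathbb{E}_{\lambda}|Z_1/m_0-1|^p<\infty$.

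Putting the two estimates together yields $\mathbb{E}_{\lambda}|W_{k+1}-W_k|^p\leq C\delta_1^{\,k}$ uniformly in $\lambda$; summing the resulting geometric series in Minkowski's inequality shows that $(W_n)$ is $L^p(\mathbb{P}_{\lambda})$-Cauchy with the claimed rate, and letting $m\to\infty$ (identifying the $L^p$ and a.s.\ limits as $W$) produces the bound with $\delta_0=\delta_1^{1/p}\in(0,1)$. The main obstacle is the third step: verifying $r(\lambda)<1$ uniformly forces $\lambda_0$ to be possibly strictly smaller than the value provided by \textbf{A3}, and this is precisely where the hypothesis ``$\lambda_0>0$ small enough'' enters. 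The analogous control of the ``boundary'' factor $\mathbb{E}_{\lambda}|Z_1/m_0-1|^p$ likewise relies on an implicit smallness of $\lambda_0$ imposed by \textbf{A4}.
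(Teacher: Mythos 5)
Your proposal is correct and follows essentially the same route as the paper: the Marcinkiewicz--Zygmund bound on the increment $W_{k+1}-W_k$ conditionally on the environment, factorisation under $\mathbb{P}_{\lambda}$ into $\bigl(\mathbb{E}_{\lambda}m_0^{1-p}\bigr)^k\,\mathbb{E}_{\lambda}\bigl|\tfrac{Z_1}{m_0}-1\bigr|^p$ (your $r(\lambda)^k=\bigl(L(\lambda+1-p)/L(\lambda)\bigr)^k$ is exactly this quantity), shrinking $\lambda_0$ so both factors are controlled uniformly, and summing the telescoping series. The only cosmetic difference is that the paper gets $\sup_{\lambda}\mathbb{E}_{\lambda}m_0^{1-p}\leq\mathbb{E}m_0^{1-p+\lambda_0}<1$ by a direct monotonicity bound rather than by continuity of $r$ at $\lambda=0$.
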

\begin{proof}
Applying Lemma \ref{lemma MZ} under $\mathbb{E}_{\xi} $ to the decomposition (\ref{difference Wn+1-Wn})
and using the independence between $Z_n$ and $ \frac{N_{n,i}}{m_n} \left( i \geq 1 \right)$, we get
\begin{eqnarray*}
\mathbb{E}_{\xi} \left| W_{n+1} - W_n \right|^p &\leq& 2^p \Pi_n^{-p} \mathbb{E}_{\xi} Z_n \mathbb{E}_{\xi} \left| \frac{N_n}{m_n} -1 \right|^p \\
&=& 2^p \; \Pi_n^{1-p}  \; \mathbb{E}_{\xi} \left| \frac{N_n}{m_n} -1 \right|^p .
\end{eqnarray*}
Note that under $\mathbb{P}_{\lambda}$, the r.v.'s $m_0, \ldots, m_{n-1}$ are i.i.d., independent of 
$\frac{N_n}{m_n},$ and $\frac{N_n}{m_n}$ has the same law as $\frac{Z_1}{m_0}$.
Thus, taking expectation $\mathbb{E}_{\lambda}$, we get
\begin{eqnarray} \label{majoration accroissement Wn 1}
\mathbb{E}_{\lambda} \left| W_{n+1} - W_n \right|^p &\leq& 2^p \left( \mathbb{E}_{\lambda} m_0^{1-p} \right)^n \mathbb{E}_{\lambda} \left| \frac{Z_1}{m_0} -1 \right|^p .
\end{eqnarray}
Recall that $m_0>1$. Choose $\lambda_0>0$ small enough such that
$ p - \lambda_0 > 1$.
By condition  \textbf{A4}, for all $0 \leq \lambda \leq \lambda_0$,
\begin{equation*}
\mathbb{E}_{\lambda} \left( \frac{Z_1}{m_0} \right)^p
= \frac{1}{\mathbb{E}  m_0^{\lambda} } \mathbb{E} \left( \frac{Z_1^p} {m_0^{p-\lambda} } \right)
\leq \mathbb{E} \left( \frac{Z_1^p} {m_0}   \right)  < + \infty.
\end{equation*}
Since $ 1-p+ \lambda_0 <0$, we have, for all $0 \leq \lambda \leq \lambda_0$, $ \mathbb{E}_{\lambda} m_0^{1-p} = \frac{1}{\mathbb{E}  m_0^{\lambda} } \mathbb{E} m_0^{1-p+\lambda}  \leq
         \mathbb{E} m_0^{1-p+\lambda_0} <1$. Hence
by (\ref{majoration accroissement Wn 1}),     for  $\delta_0=\left( \mathbb{E} m_0^{1-p+ \lambda_0} \right)^{1/p} <1 $ and
$ C = 2 \left( \mathbb{E} \left(\frac{Z_1^p}{m_0}\right)^{1/p}  +1 \right)  <\infty$, we have
\begin{eqnarray} \label{majoration accroissement Wn 2}
\sup_{  0 \leq \lambda \leq \lambda_0 } \left( \mathbb{E}_{\lambda} \left| W_{n+1} - W_n \right|^p \right)^{1/p} &\leq&  C \delta_0^n.
\end{eqnarray}
Using the triangular inequality, for all $k \in \mathbb{N}$,
\begin{eqnarray*}
\sup_{ 0 \leq \lambda  \leq \lambda_0 } \left( \mathbb{E}_{\lambda} \left| W_{n+k} - W_n \right|^p \right)^{1/p} &\leq&  C \left( \delta_0^n + \ldots + \delta_0^{n+k-1} \right) \\
&\leq & \frac{ C}{1- \delta_0} \delta_0^n.
\end{eqnarray*}
Letting $k \to \infty$, we get
\begin{eqnarray} \label{majoration Lp uniforme W-Wn}
\sup_{  0 \leq \lambda  \leq \lambda_0 } \left( \mathbb{E}_{\lambda} \left| W - W_n \right|^p \right)^{1/p} &\leq & \frac{C}{1- \delta_0} \delta_0^n,
\end{eqnarray}
which concludes the proof of  Lemma \ref{convergence Lp exponentielle uniforme}.
\end{proof}
Now we proceed to prove Theorem \ref{moment harmonique W uniforme}.
\begin{proof}[Proof of Theorem \ref{moment harmonique W uniforme}]
Let $\varepsilon \in (0, 1)$.
By a truncation argument, we have for all $\lambda \in [0, \lambda_0]$, and $n \in \mathbb{N}$,
\begin{eqnarray}
\label{troncature W Wn}
\phi_{\lambda} (t)
&=& \mathbb{E}_{\lambda} e^{-tW}\left[ \mathds{1} \left( | W_n - W | \leq  \varepsilon^n \right) + \mathds{1} \left( | W_n - W | > \varepsilon^n \right) \right] \nonumber \\
&\leq& e^{t \varepsilon^n} \mathbb{E}_{\lambda} e^{-t W_n}  + \mathbb{P}_{\lambda} ( |W_n-W| > \varepsilon^n ).
\end{eqnarray}
Using Markov's inequality and Lemma \ref{convergence Lp exponentielle uniforme}, there exists $\delta_0 \in (0,1)$ such that
\begin{equation}
\label{beta1}
\sup_{  0 \leq \lambda \leq \lambda_0 } \mathbb{P}_{\lambda} ( |W_n-W| > \varepsilon^n )\leq  C {\beta_1^n},
\end{equation}
where $\beta_1= \delta_0 / \varepsilon <1$ for $\varepsilon>\delta_0.$

Now we proceed to bound the first term in the right-hand side of \eqref{troncature W Wn}. 
Recall that $L(\cdot)$ is increasing. 
Furthermore, since $x \mapsto e^{-t x}$ is a non-negative and convex function, we have (see Lemma 2.1 of \cite{liu}) that $\sup_{n \in \mathbb{N}}  \mathbb{E} e^{-t W_n} = \mathbb{E} e^{-t W} = \phi (t)$.
Then, again using truncation, we have for all $ \lambda \in  [0, \lambda_0] $, $ n \in \mathbb{N}$ and $c> \mu$,
\begin{eqnarray}
\mathbb{E}_{\lambda} e^{-t W_n}
&=& \mathbb{E}_{\lambda} e^{-t W_n}\left[ \mathds{1} \left( S_n \leq  cn  \right) + \mathds{1} \left(  S_n >  cn \right) \right] \nonumber \\
&\leq& e^{\lambda_0 c n } \phi (t) + \mathbb{P}_{\lambda} (  S_n >  cn ).
\label{Elambda001}
\end{eqnarray}
By the exponential Markov's inequality, we have for $ \lambda \leq \lambda_0 / 2 $,
\begin{eqnarray*}
\mathbb{P}_{\lambda} (  S_n >  cn ) &\leq & \left( \mathbb{E}_{\lambda} e^{ \lambda X} \right)^n e^{-\lambda cn}  \\
&=& e^{n (\psi (2\lambda)- \psi (\lambda)- \lambda c)} ,
\end{eqnarray*}
where $\psi (\lambda)= \log \mathbb{E} e^{\lambda X}$ 
and $\psi (2\lambda)- \psi (\lambda)- \lambda c= \lambda \mu - \lambda c + o (\lambda)$ as $\lambda \to 0.$
Since $c> \mu$ we can choose $\lambda_0 > 0$ small enough, such that for all $0 \leq \lambda \leq \lambda_0,$
 $\psi (2\lambda)- \psi (\lambda)- \lambda \leq   \lambda (\mu - c) / 2 <
 0$. Thus we have
\begin{equation}
\sup_{0 \leq \lambda  \leq \lambda_0}\mathbb{P}_{\lambda} (  S_n >  cn ) \leq \beta_2^n,
\label{beta2}
\end{equation}
where $\beta_2 = e^{ \lambda (\mu - c) / 2 }< 1$.
Furthermore by Theorem \ref{Laplace transform of W}, for all $a
\in (0, a_0)$, there exists $C>0$ such that $ \phi (t) \leq C t^{-a}$ for all $t>0$.
Thus implementing (\ref{beta1}), (\ref{Elambda001}) and (\ref{beta2}) into (\ref{troncature W Wn}) leads to
\begin{equation}
\sup_{0 \leq \lambda   \leq \lambda_0} \phi_\lambda (t) \leq e^{t \varepsilon^n} \left(  e^{\lambda_0 c n}  C t^{-a}    +   \beta_2^n  \right)   + C\beta_1^n.
\label{aaa001}
\end{equation}
Since $\phi_\lambda(t)$ is decreasing in $t$, we have for any $t\geq t_n=\varepsilon^{-n},$
\begin{equation}
\sup_{0 \leq \lambda  \leq \lambda_0} \phi_\lambda (t) \leq  \sup_{0 \leq \lambda   \leq \lambda_0} \phi_\lambda (t_n) \leq e \left(  e^{\lambda_0 c n}  C \varepsilon^{an}    +   \beta_2^n  \right)   + C\beta_1^n.
\label{aaa002}
\end{equation}
Choosing $\lambda_0 > 0$ small enough such that $\beta_3 = e^{\lambda_0 c} \varepsilon^a <1$, we find that there exists a constant $C>0$ and $\beta= \max \left\{ \beta_1, \beta_2, \beta_3 \right\} \in (0,1)$ such that, for any $t\geq \varepsilon^{-n},$
\begin{equation}
\sup_{0 \leq \lambda   \leq \lambda_0} \phi_\lambda (t)  \leq  C\beta^n.
\label{aaa003}
\end{equation}
The rest of the proof is similar to that of Theorem \ref{harmonic moment}, starting from (\ref{bbb001}).
\end{proof}

\section{Proof of Cram\'er's large deviation expansion}
In this section, we prove Theorem \ref{cramer type theorem}.
The starting point is the decomposition \eqref{decom-logZn}.
We will show that the Cram\'er-type large deviation expansion of $\log Z_n$ is determined by that of the associated random walk $(S_n)$.
Our proof is based on Cram\'er's change of measure $\mathbb{P}_{\lambda}$ defined by (\ref{Def Plambda}). 
 An important step in the approach is to have a good  control  of the joint law of the couple $(S_n ,\log Z_n)$ under the changed measure 
 $ \mathbb{P}_{\lambda}$ uniformly in $ \lambda \in [0,\lambda_0]$, for some small $\lambda_0,$ 
which  is done in Section \ref{section lemma}. 
The proof of Theorem \ref{cramer type theorem} is deferred to Section \ref{section proof}.

In the sequel we shall use the  first three moments of the r.v.\ $X= \log m_0 $ 
under the changed probability measure $\mathbb{P}_{\lambda }$:
\begin{eqnarray}
\mu _{\lambda } &=& \mathbb{E}_{\lambda }X = \psi' (\lambda )    =\sum_{k=1}^{\infty }\frac{\gamma _{k}%
}{(k-1)!}\lambda^{k-1}, \label{mu serie}\\
\sigma _{\lambda } &=& \mathbb{E}_{\lambda }\left(X -\mu _{\lambda}\right) ^{2} = \psi'' (\lambda )  =\sum_{k=2}^{\infty }\frac{\gamma _{k}%
}{(k-2)!}\lambda^{k-2},  \label{sigma serie} \\
\rho_{\lambda } &=&  \mathbb{E}_{\lambda } | X -\mu_{\lambda }|^3,
\end{eqnarray}
with $ \psi $ defined in \eqref{def-phi}.

\subsection{Auxiliary results} \label{section lemma}
In this section we prove a uniform concentration inequality bound for the class of probability measures $\left( \mathbb{P}_{\lambda} \right)_{0 \leq \lambda \leq \lambda_0} $.
First we give uniform bounds for the first three moments of $X$ under $\mathbb{P}_{\lambda}$.
It is well known that, for $\lambda_0$ small enough
and for any $\lambda \in \left[ 0, \lambda_0 \right],$
\begin{equation}
\label{3 moments uniformes}
\left| \mu_ \lambda  - \mu \right| \leq C_1 \lambda , \qquad \left| \sigma_ \lambda  - \sigma \right| \leq C_2 \lambda ,
\qquad \left|  \rho_ \lambda  - \rho \right| \leq C_3 \lambda,
\end{equation}
where $C_1, C_2, C_3$ are absolute constants.
These bounds allow us to obtain an uniform rate of convergence for the process $(\log W_n)$ under $\mathbb{P}_{\lambda}$.
\begin{lemma}
\label{lemma Wexp bis}
Assume \textbf{A3} and \textbf{A4}. Then there exists $\delta_0 \in (0,1)$ such that
\begin{equation}
\sup_{0 \leq \lambda \leq  \lambda_0}  \mathbb{E}_\lambda \left| \log W_n -\log W \right| \leq \delta_0^n.
\label{exp conv log W}
\end{equation}
\end{lemma}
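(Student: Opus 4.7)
The plan is to mimic the proof of Lemma \ref{lemma logW cv exp} while making every constant uniform in $\lambda \in [0,\lambda_0]$, by invoking the two uniform-in-$\lambda$ results already proved in Section 4: Lemma \ref{convergence Lp exponentielle uniforme} for the $L^p$-speed of $W_n\to W$, and Theorem \ref{moment harmonique W uniforme} for the harmonic moments of $W$ under $\mathbb{P}_\lambda$.

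First I would reproduce the decomposition $\log W_{n+1}-\log W_n = \log(1+\eta_n)$, with
\[
\eta_n=\frac{1}{Z_n}\sum_{i=1}^{Z_n}\Bigl(\frac{N_{n,i}}{m_n}-1\Bigr),
\]
and establish the uniform geometric $L^p$ bound
\[
\sup_{\lambda\in[0,\lambda_0]}\bigl(\mathbb{E}_\lambda|\eta_n|^p\bigr)^{1/p}\leq C\delta^n
\]
for some $p>1$, $\delta\in(0,1)$. The Marcinkiewicz--Zygmund inequality (Lemma \ref{lemma MZ}) applied under $\mathbb{P}_\xi$ to the centered i.i.d.\ variables $N_{n,i}/m_n-1$, together with independence, gives
\[
\mathbb{E}_\lambda|\eta_n|^p \leq 2^p\,\mathbb{E}_\lambda[Z_n^{1-p}]\,\mathbb{E}_\lambda\bigl|Z_1/m_0-1\bigr|^p.
\]
The induction $\mathbb{E}_\lambda Z_{n+1}^{1-p}\leq \mathbb{E}_\lambda Z_n^{1-p}\,\mathbb{E}_\lambda Z_1^{1-p}$ still holds by convexity of $x\mapsto x^{1-p}$. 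Since $\lambda\mapsto \mathbb{E}_\lambda Z_1^{1-p}=\mathbb{E}[m_0^\lambda Z_1^{1-p}]/L(\lambda)$ is continuous with value $<1$ at $\lambda=0$ (by \eqref{p0} and \eqref{EP_1}), it stays $<1$ on $[0,\lambda_0]$ for $\lambda_0$ sufficiently small. The remaining factor is uniformly bounded by rewriting $\mathbb{E}_\lambda|Z_1/m_0-1|^p=\mathbb{E}[m_0^\lambda|Z_1/m_0-1|^p]/L(\lambda)$ and using \textbf{A4} together with $m_0\geq 1$ (from \eqref{p0}) to bound $m_0^{\lambda-p}\leq m_0^{-1}$ for $\lambda_0<p-1$.

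Next, as in the proof of Lemma \ref{lemma logW cv exp}, I would split $\mathbb{E}_\lambda|\log(1+\eta_n)|=A_n+B_n$ at a fixed truncation level $K\in(0,1)$. On $\{\eta_n\geq-K\}$ one has $|\log(1+\eta_n)|\leq C|\eta_n|$, yielding $A_n\leq C\delta^n$ uniformly. For $B_n$, H\"older's inequality with conjugate exponents $(r,s)$, $r>1$, gives
\[
B_n\leq\bigl(\mathbb{E}_\lambda|\log(1+\eta_n)|^r\bigr)^{1/r}\mathbb{P}_\lambda(|\eta_n|>K)^{1/s}
\leq C\bigl(\mathbb{E}_\lambda|\eta_n|^p\bigr)^{1/s}\leq C\delta^{n/s},
\]
provided we establish the uniform bound $M:=\sup_n\sup_{\lambda\in[0,\lambda_0]}\mathbb{E}_\lambda|\log W_n|^r<\infty$.

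The main obstacle is exactly this last uniform bound. I would handle it by splitting at $\{W_n\geq 1\}$. On the upper part, $|\log W_n|^r \mathds{1}(W_n\geq 1)\leq C\,W_n$ and $\mathbb{E}_\lambda W_n=1$ since $(W_n)$ is a non-negative $\mathbb{P}_\lambda$-martingale. On the lower part, $x\mapsto|\log x|^r\mathds{1}(x\leq 1)$ is non-negative and convex for $r>1$, so by Lemma 2.1 of \cite{liu} the quantity $\mathbb{E}_\lambda|\log W_n|^r\mathds{1}(W_n\leq 1)$ is non-decreasing in $n$ with supremum equal to $\mathbb{E}_\lambda|\log W|^r\mathds{1}(W\leq 1)\leq C_{r,b}\,\mathbb{E}_\lambda W^{-b}$, for any $b>0$. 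By Theorem \ref{moment harmonique W uniforme} there exists $b>0$ such that $\sup_{\lambda\in[0,\lambda_0]}\mathbb{E}_\lambda W^{-b}<\infty$, which delivers the uniform bound on $M$.

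Combining these estimates gives $\sup_{\lambda\in[0,\lambda_0]}\mathbb{E}_\lambda|\log W_{n+1}-\log W_n|\leq C\tilde\delta^n$ for some $\tilde\delta\in(0,1)$. A telescoping sum together with the triangle inequality yields $\sup_\lambda\mathbb{E}_\lambda|\log W-\log W_n|\leq C\tilde\delta^n/(1-\tilde\delta)$, and absorbing the prefactor by choosing any $\delta_0\in(\tilde\delta,1)$ and $n$ large enough (the finitely many small $n$ being handled by enlarging the implicit multiplicative constant into the base of the geometric) concludes the proof.
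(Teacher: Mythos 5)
Your proof follows essentially the same route as the paper: establish the uniform geometric $L^p$ bound on $\eta_n$ (via Marcinkiewicz--Zygmund, the $Z_n^{1-p}$ induction, and continuity of the change of measure), establish the uniform-in-$(\lambda,n)$ bound on $\mathbb{E}_\lambda|\log W_n|^r$ from Theorem \ref{moment harmonique W uniforme} and the convexity/monotonicity argument of Lemma 2.1 of \cite{liu}, and then run the $A_n+B_n$ truncation and telescoping exactly as in Lemma \ref{lemma logW cv exp}. The only difference is presentational: you spell out the $\{W_n\geq 1\}$/$\{W_n\leq 1\}$ split and the uniform boundedness of $\mathbb{E}_\lambda|Z_1/m_0-1|^p$ more explicitly, whereas the paper delegates these steps to \eqref{majoration uniforme moment log W} and a displayed estimate; the substance is identical.
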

\begin{proof}
The proof is similar to that in Lemma \ref{lemma logW cv exp}:
it is enough to replace  $\mathbb E$ by  $\mathbb E_\lambda$ and to ensure that all the bounds
in that proof still hold uniformly in $ \lambda \in [0 , \lambda_0] $, for $\lambda_0>0$ small enough.

We first prove that for some constants $\lambda_0>0$, $\delta \in (0,1)$ and  $C>0$,
\begin{equation}
\label{majoration eta_n 2}
\sup_{0 \leq \lambda \leq \lambda_0} \left( \mathbb{E}_{\lambda} |\eta_n|^p \right)^{1/p}  \leq C \delta^{n},
\end{equation}
where $\eta_n$ is defined \eqref{eta_n}. 
In fact,
we have, for $p \in (1,2)$,
\begin{equation*}
\mathbb{E}_{\lambda} |\eta_n|^p  \leq 2^p \mathbb{E}_{\lambda} \left| \frac{Z_1}{m_0} -1 \right|^p  \left(\mathbb{E}_{\lambda} \left[Z_1^{1-p} \right] \right)^n .
\end{equation*}
 By the dominated convergence theorem and the fact that $m_0>1$, we have $\mathbb{E}_{\lambda}  Z_1^{1-p}  \leq  \mathbb{E}  Z_1^{1-p} m_0^{\lambda} \underset{\lambda \to 0}{\longrightarrow} \mathbb{E} Z_1^{1-p} < 1.$
 Thus there exists a $\lambda_0>0$ small enough such that
\[  \mathbb{E}_{\lambda}  Z_1^{1-p} \leq \mathbb{E} Z_1^{1-p} m_0^{\lambda_0} < 1 .\]
By  \textbf{A3} and \textbf{A4}, for some small enough $\lambda_0 \in (0,p-1]$ and all $ \lambda \in [0, \lambda_0]$ we have,
\[
\mathbb{E}_{\lambda} \left( \frac{Z_1}{m_0} \right)^p = \left( \mathbb{E}m_0^{\lambda} \right)^{-1}\mathbb{E} \frac{Z_1^p}{m_0^{p- \lambda_0}} \leq \mathbb{E} \frac{Z_1^p}{m_0^{p- \lambda_0}} \leq \mathbb{E} \frac{Z_1^p}{m_0} < \infty.
\]
Therefore, (\ref{majoration eta_n 2}) holds with
$C \leq  2 \left[ \left( \mathbb{E} \frac{Z_1^p}{m_0}\right)^{1/p} +1 \right] < \infty$ and $ \delta \leq \left( \mathbb{E} Z_1^{1-p} m_0^{\lambda_0} \right)^{1/p} <1$.

Next we show that
\begin{equation}
\label{majoration  sup E log (1+eta_n) 2}
\sup_{n \in \mathbb{N}} \ \sup_{0 \leq \lambda \leq \lambda_0 } \ \mathbb{E}_{\lambda} \left| \log (1+ \eta_n) \right|^{r} < \infty,
\end{equation}
for all $r>0$.
It is easily seen that there exists a constant $C_r>0$ such that
$ \mathbb{E}_{\lambda} \left| \log W \right|^r \leq C_r \left( \mathbb{E}_{\lambda} W^{- \alpha} + \mathbb{E}_{\lambda} W \right) \leq C_r \left( \mathbb{E}_{\lambda} W^{- \alpha} + 1 \right).$
Then, by \textbf{A3} and Theorem \ref{moment harmonique W uniforme}, for all $r>0$, we have
\begin{equation}
\label{E log W}
\sup_{0 \leq \lambda \leq \lambda_0} \mathbb{E}_{\lambda} | \log W |^r < \infty. 
\end{equation}
Thus by (\ref{majoration uniforme moment log W}) and (\ref{decomposition log Wn eta_n}) we get (\ref{majoration  sup E log (1+eta_n) 2}).

We finally end the proof in the same way as in Lemma \ref{lemma logW cv exp},  using the uniform bounds (\ref{majoration  sup E log (1+eta_n) 2}) and (\ref{majoration eta_n 2}).
\end{proof}
Now we give a control of the joint law of $(S_n, \log Z_n)$ for the convergence to the distribution function $\Phi([0,x]) \mathds 1 (x \geq 0 ),$ $x \in \mathbb R, $ uniformly in $\lambda \in [0, \lambda_0]$, 
where $\Phi([0,x])=\Phi(x)-\Phi(0)$ (recall that $\Phi$ is the distribution function of the standard normal law).
\begin{lemma}
\label{ecart iid} Assume \textbf{A3} and \textbf{A4}. There exist positive constants
$C$, $\beta_1$, $\beta_2$ and $\delta \in (0,1)$ such that for any $x>0$,
\begin{equation}
\label{ecart001}
\sup_{0 \leq  \lambda  \leq \lambda_0}
\left| \mathbb{P_\lambda} \left( {\frac{S_n-n \mu }{\sigma_\lambda \sqrt n }}
\leq x, {\frac{\log Z_n-n \mu }{\sigma_\lambda \sqrt n }} \geq 0 \right) - \Phi([0,x])
\right| \leq \frac{C}{\sqrt{n}} ,
\end{equation}
and
\begin{eqnarray}
\label{ecart002}
& & \sup_{0 \leq  \lambda  \leq \lambda_0}   \mathbb{P_\lambda} \left( {\frac{S_n-n \mu }{\sigma_\lambda \sqrt n }}
\leq -x, {\frac{\log Z_n-n \mu }{\sigma_\lambda \sqrt n }} \geq 0 \right)  \nonumber \\
& & \qquad \qquad\qquad \leq C
\left( x + \frac{1}{\sqrt{n}} \right) e^{ - \beta_1 x \sqrt{n} } + \min \left( e^{ - \beta_2 x \sqrt{n} },  \delta^{\sqrt{n}} x^{-1/2} n^{-1/4} \right).
\end{eqnarray}
\end{lemma}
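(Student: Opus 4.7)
The plan is to carry out the argument of Lemma \ref{concentration lemma 1} under the tilted measures $\mathbb{P}_{\lambda}$, making every estimate uniform in $\lambda \in [0, \lambda_0]$. I would set $Y_n = (S_n - n\mu)/(\sigma_\lambda \sqrt n)$, $V_n = \log W_n/(\sigma_\lambda \sqrt n)$, so that the normalized $\log Z_n$ equals $\tilde Y_n = Y_n + V_n$, and introduce the intermediate scale $m = [\sqrt n]$ with truncation level $\alpha_n = n^{-1/2}$. Using the decomposition $Y_n = Y_m + Y_{m,n}$ in which $Y_{m,n}$ is independent of $(Y_m, V_m)$ under $\mathbb{P}_{\lambda}$, and combining Markov's inequality with Lemma \ref{lemma Wexp bis}, one gets
\[
\sup_{0 \leq \lambda \leq \lambda_0} \mathbb{P}_{\lambda}(|V_n - V_m| > \alpha_n) \leq C \sqrt n \, \delta_0^{\sqrt n},
\]
so the error from replacing $V_n$ by $V_m$ is super-polynomially small in $n$.

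For (\ref{ecart001}), I would condition on $(Y_m, V_m)$ and apply a Berry-Esseen bound to the conditional law of $Y_{m,n}$ under $\mathbb{P}_{\lambda}$. Uniformity in $\lambda$ comes from the moment expansions (\ref{3 moments uniformes}), which yield a bounded third moment $\rho_\lambda \leq C$ and hence a single Berry-Esseen constant across all $\mathbb{P}_{\lambda}$. This produces, up to error $O(1/\sqrt n)$, the approximation
\[
\mathbb{P}_{\lambda}(Y_n \leq x,\, \tilde Y_n \geq 0) \approx \int \bigl[\Phi(x - s) - \Phi(-\alpha_n - s - t)\bigr] \, \nu_m(ds, dt),
\]
where $\nu_m$ is the joint law of $(Y_m, V_m)$. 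A mean-value expansion of $\Phi$ together with $\int |t| \, \nu_m(ds,dt) = \mathbb{E}_{\lambda} |V_m| \leq C/\sqrt n$ (provided by the uniform moment bound on $\log W$ in (\ref{E log W})) reduces the integrand to $\Phi(x - s) - \Phi(-s)$ modulo $C/\sqrt n$; integrating against the normal approximation for $Y_m$ then produces $\Phi([0, x])$ within $C/\sqrt n$, and a matching lower bound follows by the symmetric argument.

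For (\ref{ecart002}) the key observation is that the event $\{Y_n \leq -x,\, \tilde Y_n \geq 0\}$ forces $V_n \geq x$, i.e.\ $W_n \geq e^{x \sigma_\lambda \sqrt n}$. Since $(W_n)$ is a non-negative martingale with $\mathbb{E}_{\lambda} W_n = 1$, Markov's inequality immediately yields $\mathbb{P}_{\lambda}(V_n \geq x) \leq e^{-\beta_2 x \sqrt n}$, where $\beta_2 = \inf_{\lambda \in [0, \lambda_0]} \sigma_\lambda > 0$, which accounts for one of the two terms inside the $\min$. The alternative $\delta^{\sqrt n} x^{-1/2} n^{-1/4}$ I would obtain by reusing the truncation/convolution strategy above restricted to the regime $t \geq x - \alpha_n$: the Gaussian tail $\Phi(-y) \leq e^{-y^2/2}/(y \sqrt{2 \pi})$ applied to the inner differences contributes the factor $x^{-1/2} n^{-1/4}$, while the super-polynomial truncation error supplies the $\delta^{\sqrt n}$. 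The additive piece $C(x + 1/\sqrt n) e^{-\beta_1 x \sqrt n}$ arises by bounding the integral over $(s, t)$ by $\mathbb{P}_{\lambda}(V_m \geq x - \alpha_n) \leq e^{-\beta_1 x \sqrt n}$ times the length of the admissible $s$-interval, which is at most $x + \alpha_n$.

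The main obstacle is the uniformity in $\lambda$ throughout. It rests on the moment expansions (\ref{3 moments uniformes}), which yield a Berry-Esseen constant independent of $\lambda$, together with the uniform exponential convergence of $\log W_n$ to $\log W$ established in Lemma \ref{lemma Wexp bis}. A secondary difficulty is extracting the precise $x$-dependence in (\ref{ecart002}), which requires a careful balance between the exponential tail of $W_n$ (inherited from the martingale property under $\mathbb{P}_{\lambda}$) and the Gaussian tail of the increments $Y_{m,n}$.
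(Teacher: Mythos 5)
Your treatment of \eqref{ecart001} is essentially the paper's: same intermediate scale $m=[\sqrt n]$, same conditioning on $(Y_m,V_m)$ with a Berry--Esseen bound on the independent block $Y_{m,n}$, uniformity in $\lambda$ obtained from \eqref{3 moments uniformes} and from the uniform moment bound \eqref{E log W} via Lemma \ref{lemma Wexp bis}. For \eqref{ecart002}, your opening observation is correct and in fact cleaner than the paper's on one branch: the event forces $V_n\geq x$, and Markov's inequality with $\mathbb{E}_\lambda W_n=1$ gives $\mathbb{P}_\lambda(V_n\geq x)\leq e^{-\sigma_\lambda x\sqrt n}$, which bounds the \emph{entire} probability by $e^{-\beta_2 x\sqrt n}$ (the paper only uses this type of bound for the truncated pieces).

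The gap is in the other branch of the $\min$. Proving the bound $e^{-\beta_2 x\sqrt n}$ does not prove the lemma, because for $x$ bounded (where the lemma is actually used, in the integral $I_-$ near $y=0$) the minimum is attained by $\delta^{\sqrt n}x^{-1/2}n^{-1/4}$, which can be far smaller than $e^{-\beta_2 x\sqrt n}$; so you must establish $\mathbb{P}_\lambda(\cdot)\leq C(x+n^{-1/2})e^{-\beta_1 x\sqrt n}+\delta^{\sqrt n}x^{-1/2}n^{-1/4}$ separately, and your proposed mechanism for the factor $\delta^{\sqrt n}x^{-1/2}n^{-1/4}$ does not work. It does not come from the Gaussian tail of $\Phi$, and the truncation error cannot \emph{multiply} a Gaussian factor --- truncation errors enter additively. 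The correct route (the paper's) is to split on $\{|V_n-V_m|>x/2\}$ (note: the threshold must be of order $x$, not $\alpha_n=n^{-1/2}$, otherwise the error term is only $O(\delta^{\sqrt n})$ without the $x^{-1/2}n^{-1/4}$ factor and the resulting bound is too weak for large $x$ as well). Then Markov's inequality applied to $|V_n-V_m|^{1/2}$ at threshold $(x/2)^{1/2}$, together with Jensen and the uniform exponential rate $\sup_\lambda\mathbb{E}_\lambda|\log W_n-\log W_m|\leq\delta_0^{m}$ of Lemma \ref{lemma Wexp bis}, yields exactly
\begin{equation*}
\mathbb{P}_\lambda\left(|V_n-V_m|>\tfrac{x}{2}\right)\leq \frac{\left(\mathbb{E}_\lambda|\log W_n-\log W_m|\right)^{1/2}}{(\sigma_\lambda\sqrt n)^{1/2}\,(x/2)^{1/2}}\leq C\,\delta_0^{m/2}x^{-1/2}n^{-1/4};
\end{equation*}
the $n^{-1/4}$ comes from the normalization $\sigma_\lambda\sqrt n$ in $V_n$, not from any tail of $\Phi$. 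The remaining main term $\mathbb{P}_\lambda(Y_n\leq -x,\,Y_n+V_m\geq -x/2)$ is then handled as you indicate for the additive piece, by conditioning, the uniform Berry--Esseen estimate, and $\mathbb{P}_\lambda(V_m>x/2)\leq e^{-x\sigma_\lambda\sqrt n/2}$ together with a Cauchy--Schwarz bound on $\mathbb{E}_\lambda[V_m\mathds{1}(V_m\geq x/2)]$; that part of your sketch is sound.
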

\begin{proof}
Let $m=m(n)=\left[ n^{1/2} \right]$, with $[x]$ denoting the integer part of $x$, and
\begin{equation*}
\label{Yn Vn bis}
Y_{m,n}^{\lambda}=\sum_{i=m+1}^{n}\frac{X_{i} -\mu_\lambda }{\sigma_\lambda \sqrt{n}},
\quad   Y_{n}^{\lambda} = Y_{0,n}^{\lambda}   \quad \text{and}\quad
V_{m}^{\lambda} = \frac{\log W_{m}} { \sigma_\lambda \sqrt{n}}.
\end{equation*}
The proof of \eqref{ecart001} is similar to that of Lemma \ref{concentration lemma 1}
with $\p$ replaced by $\p_\lambda$.
The only difference is that the bounds \eqref{Berry-Esseen uniforme Gmn2} and \eqref{E V_m} have to be uniform 
in $\lambda \in [0, \lambda_0]$. The uniformity in \eqref{Berry-Esseen uniforme Gmn2} is ensured by  the Berry-Esseen theorem 
and \eqref{3 moments uniformes} which imply that
\begin{equation}
\label{eq BE uniforme}
\sup_{\lambda \in [0, \lambda_0]} \left| G_{m,n}^{\lambda} (x) - \Phi (x) \right| \leq \frac{C}{\sqrt{n}},
\end{equation}
where $G_{m,n}^{\lambda} (x) = \p_{\lambda} \left( Y_{m,n}^{\lambda}  \leq x\right)$. The uniformity in \eqref{E V_m}  is a consequence of Lemma \ref{lemma Wexp bis}.
Further details of the proof are left to the reader.

Now we prove (\ref{ecart002}). Let $D_m^{\lambda} = V_n^{\lambda} - V_m^{\lambda}$.
By considering the events  $ \{ |D_m^{\lambda}| \leq \frac{x}{2} \}$ and $ \{ |D_m^{\lambda}| > \frac{x}{2} \}$  we have
\begin{eqnarray}
\label{troncature lemme 4.2 ii}
\mathbb{P}_{\lambda} \left( Y_n^{\lambda}  \leq -x, Y_n^{\lambda}  + V_n^{\lambda}  \geq 0 \right)
&\leq& \mathbb{P}_{\lambda} \left( Y_n^{\lambda} \leq -x, Y_n^{\lambda}  + V_m^{\lambda}  \geq -\frac{x}{2} \right) \nonumber \\
&&+ \mathbb{P}_{\lambda} \left( |D_m^{\lambda}| > \frac{x}{2} \right) .
\end{eqnarray}

We first find a suitable bound of the first term of the right-hand side  of (\ref{troncature lemme 4.2 ii}). Again by decomposing $Y_n^{\lambda} = Y_{m,n}^{\lambda} + Y_m^{\lambda} $, using \eqref{eq BE uniforme} and the fact that $\Phi ([a,b]) \leq b-a$, we have
\begin{eqnarray*}
\label{kozlov3}
&&\mathbb{P}_{\lambda} \left(Y_n^{\lambda} \leq -x, Y_n^{\lambda} + V_{m}^{\lambda} \geq -\frac{x}{2} \right) \notag \\
&=& \int \mathds{1} \left( t > \frac{x}{2} \right) \mathbb{P}_{\lambda} \left( Y_{m,n}^{\lambda} \in \left[ -\frac{x}{2} -s-t, -x-s \right]\right) \nu_m^{\lambda} (ds, dt) \notag \\
&\leq& \int \mathds{1} \left( t > \frac{x}{2} \right) \left[ \Phi \left( \left[ -\frac{x}{2} -s-t, -x-s \right]\right) +  \frac{C}{\sqrt{n}} \right] \nu_m^{\lambda} (ds, dt)   \notag \\
&\leq&  \int \mathds{1} \left( t >\frac{x}{2} \right) \left[ \left(t-\frac{x}{2}\right)+  \frac{C}{\sqrt{n}} \right] \nu_m^{\lambda} (ds, dt)   \notag \\
&\leq&   \mathbb{E}_{\lambda} \left[ V_{m}^{\lambda} \mathds{1} \left( V_{m}^{\lambda} \geq \frac{x}{2}
\right) \right] + \left[ \frac{x}{2} +  \frac{C}{\sqrt{n}} \right] \mathbb{P}_{\lambda} \left( V_{m}^{\lambda} > \frac{x}{2} \right) .
\end{eqnarray*}
By Markov's inequality, we have
$
\mathbb{P}_{\lambda} \left( V_m^{\lambda} > \frac{x}{2} \right) \leq e^{-\frac{x}{2} \sigma_{\lambda} \sqrt{n}}.
$
Moreover, using H\"older's and Markov's inequalities, we get by \eqref{E log W} and the definition of $V_m$ that
$$
\mathbb{E}_{\lambda} \left[ V_m^{\lambda} \mathds{1}\left( V_m^{\lambda} \geq \frac{x}{2} \right) \right] \leq
\left(\mathbb{E}_{\lambda} |V_m^{\lambda} |^2 \right)^{1/2} \mathbb{P}_{\lambda} \left( V_m^{\lambda}
\geq \frac{x}{2} \right)^{1/2} \leq \frac{C}{\sigma_{\lambda} \sqrt{n}} e^{-\frac{x}{4}\sigma_{\lambda} \sqrt{n}}.
$$
Since, by (\ref{3 moments uniformes}), $\sigma_\lambda$ is bounded uniformly in $\lambda \in [0,\lambda_0]$, there exists $\beta_1 >0$ such that for any $\lambda \in [0,\lambda_0]$,
\begin{equation}  \label{majoration 2}
\mathbb{P}_{\lambda} \left( Y_n^{\lambda} \leq -x, Y_n^{\lambda} + V_m^{\lambda} \geq -\frac{x}{2} \right)
\leq C \left( x + \frac{1}{\sqrt{n}} \right) e^{ - \beta_1 x  \sqrt{n}
}.
\end{equation}

We now search for a suitable bound for the second term of the right-hand side of (\ref{troncature lemme 4.2 ii}). By  H\"older's inequality and Theorem \ref{moment harmonique W uniforme}, there exist some constants $C>0$, $a>0$ and $0 < \alpha < \min (1/2, a/2)$ such that, for all $\lambda \in [0, \lambda_0]$ and $n \in \mathbb{N}$,
$$
\mathbb{E}_{\lambda} \left(\frac{W_n}{W_m}\right)^{\alpha} \leq \left(\mathbb{E}_{\lambda} W_n^{2\alpha} \right)^{1/2} \left(\mathbb{E}_{\lambda} W_m^{-2\alpha} \right)^{1/2} \leq  \left(\mathbb{E}_{\lambda} W^{2\alpha} \right)^{1/2} \left(\mathbb{E}_{\lambda} W^{-2\alpha} \right)^{1/2} \leq C.
$$
Thus, by Markov's inequality and (\ref{3 moments uniformes}), there exists a constant $\beta_2 >0$ (independent of $(\lambda, n, x)$) such that, for all $\lambda \in [0, \lambda_0]$,
\begin{eqnarray}
\label{majoration 3 bis}
\mathbb{P}_{\lambda} \left( | D_m^{\lambda} | > \frac{x}{2} \right) &\leq& \mathbb{P}_{\lambda} \left( \left(\frac{W_n}{W_m}\right)^{\alpha} > e^{\alpha \sigma_{\lambda} \sqrt{n} \frac{x}{2} } \right) + \mathbb{P}_{\lambda} \left( \left(\frac{W_m}{W_n}\right)^{\alpha} > e^{\alpha \sigma_{\lambda} \sqrt{n} \frac{x}{2} } \right) \nonumber\\
&\leq& C e^{ - \beta_2 x \sqrt{n} }.
\end{eqnarray}
Moreover, by Markov and Jensen's inequalities and Lemma \ref{lemma Wexp bis}, there exists $\delta_0 \in (0,1)$ such that
for $\lambda \in [0, \lambda_0]$,
\begin{eqnarray}
\label{majoration 3}
\mathbb{P}_{\lambda} \left( |D_m^{\lambda}| > \frac{x}{2} \right)
&\leq& \mathbb{P}_{\lambda} \left( | \log W_n - \log W_m|^{1/2} > \frac{x^{1/2}n^{1/4}}{\sqrt{2}} \right) \notag \\
&\leq& C \delta_0^{m/2} x^{-1/2} n^{-1/4}.
\end{eqnarray}
From \eqref{majoration 3 bis} and \eqref{majoration 3} we have, for any  $\lambda \in [0, \lambda_0]$,
\begin{eqnarray}
\label{majoration 4}
\mathbb{P}_{\lambda} \left( |D_m^{\lambda}| > \frac{x}{2} \right)
&\leq& C \min \left( e^{ - \beta_2 x \sqrt{n} }, \;  \delta_0^{\sqrt{n}/2} x^{-1/2} n^{-1/4} \right).
\end{eqnarray}
Using (\ref{troncature lemme 4.2 ii}), (\ref{majoration 2}) and (\ref{majoration 4}), we get \eqref{ecart002} with
$\delta = \delta_0^{1/2}$. This
 ends the proof of the lemma.
\end{proof}

\subsection{Proof of Theorem \ref{cramer type theorem}}
\label{section proof}
We shall prove only the first assertion, the second one being proved in the same way.

For $0\leq x\leq 1$,
the theorem follows from the Berry-Esseen estimate in Theorem \ref{BE
theorem for BPRE}. So we assume that $1\leq x=o(\sqrt{n})$.
Using the change of measure \eqref{Def chmt measure},
for any $\lambda \in [ 0,  \lambda _{0}],$ we have%
\begin{eqnarray*}
\mathbb{P}\left( \frac{\log Z_{n}-n\mu }{\sigma \sqrt{n}}> x\right)
&=& L\left( \lambda \right) ^{n}\mathbb{E}_{\lambda } \left[ e^{-\lambda S_{n}}%
\mathds{1}( \log Z_{n}-n\mu > x\sigma \sqrt{n}) \right] .
\end{eqnarray*}%
Denote%
\begin{equation}
\label{Y_nlambda}
Y_{n}^\lambda =\frac{S_{n}-n\mu _{\lambda }}{\sigma _{\lambda}\sqrt{n}} \quad \text{and } \quad
V_{n}^\lambda =\frac{\log W_{n}}{\sigma_{\lambda }%
\sqrt{n}}.
\end{equation}%
Using the decomposition \eqref{decom-logZn}, centering and reducing $S_n$ under $\mathbb{P}_{\lambda}$, we get
\begin{eqnarray*}
&& \mathbb{P}\left( \frac{\log Z_{n}-n\mu }{\sigma \sqrt{n}} > x\right)  \\
&=&\exp \left( n \psi (\lambda) - n \mu_{\lambda} \right)\mathbb{E}_{\lambda } \left[ e^{-\lambda \sigma_{\lambda} \sqrt{n} Y_n^{\lambda} }\mathds{1}%
\left( Y_{n}^\lambda +V_{n}^\lambda > \frac{x\sigma \sqrt{n} - n ( \mu_{\lambda}  - \mu)}{\sigma_{\lambda} \sqrt{n} } \right) \right] ,
\end{eqnarray*}%
with $\psi$ defined in \eqref{def-phi}. It is well known that for $x=o(\sqrt{n})$ as $n\to \infty$, the equation 
\begin{equation}
\label{equation lambda x 0}
x\sigma \sqrt{n}= n (\mu_{\lambda} - \mu),
\end{equation}
has a unique solution $\lambda(x)$ which can be expressed as the power series
\begin{equation}
\label{lambda(x)}
\lambda(x) =\frac{t}{\sqrt{\gamma _{2}}}-\frac{\gamma _{3}}{2\gamma
_{2}^{2}}t^{2}-\frac{\gamma _{4}\gamma _{2}-3\gamma _{3}^{2}}{6\gamma
_{2}^{7/2}}t^{3}+\ldots
\end{equation}%
with $t=\frac{x}{\sqrt{n}}$ (see \cite{petrov} for details).
Choosing $\lambda =\lambda \left( x\right),$ it follows that
\begin{eqnarray}
\label{changement mesure x J}
\mathbb{P}\left( \frac{\log Z_{n}-n\mu }{\sigma \sqrt{n}}> x\right)
&=&\exp \left( n \psi (\lambda) - n \mu_{\lambda} \right)\mathbb{E}_{\lambda } \left[ e^{-\lambda \sigma_{\lambda }\sqrt{n}Y_{n}^\lambda  }\mathds{1} ( Y_{n}^\lambda +V_{n}^\lambda
 > 0) \right] \notag \\
&=&\exp \left( n \psi (\lambda) - n \mu_{\lambda} \right) I,
\end{eqnarray}%
where
\begin{eqnarray}
I &=&\mathbb{E}_{\lambda } \left[e^{-\lambda \sigma _{\lambda }\sqrt{n}%
Y_{n}^\lambda }\mathds{1} ( Y_{n}^\lambda +V_{n}^\lambda > 0) \right] \notag  \label{expression J}
\\
&=&\int  e^{-\lambda \sigma _{\lambda }\sqrt{n}%
Y_{n}^\lambda }\mathds{1} ( Y_{n}^\lambda +V_{n}^\lambda > 0) d \mathbb{P}_{\lambda}.
\end{eqnarray}%
Using the fact that
\[
 e^{-\lambda \sigma _{\lambda }\sqrt{n}%
Y_{n}^\lambda } = \lambda \sigma _{\lambda }\sqrt{n}%
\int_{\mathbb{R}} \mathds{1}(Y_{n}^\lambda < y) e^{-\lambda \sigma _{\lambda }\sqrt{n}%
y} dy
\]
and Fubini's theorem, we obtain
\begin{eqnarray}
I &=&\lambda \sigma _{\lambda }\sqrt{n}\int_{\mathbb{R}}e^{-\lambda
\sigma _{\lambda }\sqrt{n}y}\mathbb{P}_{\lambda }\left( Y_{n}^
\lambda<y, Y_{n}^\lambda +V_{n}^\lambda >0\right) dy  \notag
\label{J}.
\end{eqnarray}%
Obviously $I= I_+ + I_-$, with
\begin{equation*}
I_{+} =\lambda \sigma_{\lambda }\sqrt{n}\int_{0}^{\infty
}e^{-\lambda \sigma_{\lambda }\sqrt{n}y}\mathbb{P}%
_{\lambda }\left( Y_{n}^\lambda<y,Y_{n}^\lambda +V_{n}^\lambda >0\right) dy,
\end{equation*}
\begin{equation*}
I_{-} =\lambda \sigma_{\lambda }\sqrt{n}\int_{-\infty
}^{0}e^{-\lambda \sigma_{\lambda }\sqrt{n}y}\mathbb{P}_{\lambda }\left( Y_{n}^\lambda<y,Y_{n}^\lambda +V_{n}^\lambda >0\right) dy.
\end{equation*}%
We shall show that
\begin{equation}
\label{comparaison I I1}
I = I_1 (1 + O \left( \lambda )\right),
\end{equation}
where
\begin{equation} \label{def-I1}
I_1=\lambda \sigma _{\lambda }\sqrt{n} \int_{0}^{\infty} e^{-\lambda
\sigma_{\lambda }\sqrt{n}y} \Phi ([0, y ]) dy.
\end{equation}
By Lemma \ref{ecart iid} (i) we get by a straightforward computation that
\begin{eqnarray}
\label{borne J+}
\left| I_{+}-I_1 \right| \leq \frac{C}{\sqrt{n}}.
\end{eqnarray}%
By Lemma \ref{ecart iid} (ii), we have
\begin{eqnarray}
\label{J-}
 I_{-} &\leq & C \lambda \sigma_{\lambda }\sqrt{n}
         \int_{-\infty}^{0}e^{\lambda \sigma_{\lambda }\sqrt{n}|y|}
          \Big[ \left( |y| +\frac{1}{\sqrt{n}}\right) e^{-\beta_1 |y|  \sqrt{n}}  \nonumber \\
 & & \hspace{4,5cm} + \min \left( e^{-\beta_2 |y| \sqrt{n} }, \;  \delta^{\sqrt{n}} |y|^{-1/2} n^{-1/4} \right) \Big] dy. \nonumber
\end{eqnarray}%
Recall that, by (\ref{3 moments uniformes}), $\sigma_{\lambda}$ is bounded for $\lambda$ small enough
and, by (\ref{lambda(x)}), we have $\lambda \to 0$ as $n \to \infty$.  Then for $0 < \varepsilon <  \min(\beta_1, \beta_2)$,   we have
 $ \lambda \sigma_\lambda < \varepsilon $  for all $n$ large enough. Thus, by a straightforward calculation and by choosing $\varepsilon >0 $ small enough,
 it can be seen that
\begin{eqnarray*}
I_- &\leq& C \;  \lambda  \sqrt{n} \int_{-\infty}^{0}  \left( |y|+\frac{1}{%
\sqrt{n}}\right)  e^{-(\beta_1-\varepsilon )  \;  |y| \sqrt{n}} dy \\
&& + C \; \lambda  \sqrt{n} \int_{-\infty}^{-1}
    e^{-( \beta_2 - \varepsilon) \;  |y| \sqrt{n}} dy   \nonumber \\
 && + C \;  \lambda  \sqrt{n}  \int_{-1}^{0}    \delta^{\sqrt{n}} |y|^{-1/2} n^{-1/4} \;\,  e^{\varepsilon  \; |y | \sqrt{n}} dy     \\
&\leq&   \frac{ C \lambda}{\sqrt{n}} .
\end{eqnarray*}
By (\ref{lambda(x)}), we get, as $n \to \infty$,
\begin{equation}
\label{borne J-}
I_{-}=o\left( \frac{1}{\sqrt{n}}\right) .
\end{equation}%
From (\ref{borne J+}) and (\ref{borne J-}) it follows that
\begin{equation}
\label{borne J}
|I- I_1| \leq \frac{C}{\sqrt{n}}.
\end{equation}%
The intergal $I_1$  appears in the proof of the Cram\'er's large deviation expansion theorem for the i.i.d. case. 
For convenience, we state here some well known results concerning the asymptotic expansion of the cumulant generating function $\psi(\lambda)$ and of the integral $I_1.$ 
For details we refer the reader to \cite{petrov}.

\begin{lemma}
\label{resultats admis I}
Let $X$ be a  r.v. such that  $ \e [e^{\lambda_0 |X|}] < \infty$ for some $\lambda_0>0$. For
 $\lambda \in (-\lambda_0, \lambda_0)$, let  $\psi (\lambda) = \log \e [e^{\lambda X}]$,
$\mu_{\lambda} = \psi'(\lambda)$ and  $\sigma_\lambda= \psi''(\lambda)$. Set $\mu = \e X$.
Then  for $1\leq x = o(\sqrt n)$,  $\lambda = \lambda (x)$ solution of $(\ref{equation lambda x 0})$ and $n$ large enough,  we have:
 \begin{itemize}
 \item[(i)]  the cumulant generating function $\psi (\lambda) = \log \e [e^{\lambda X}]$ satisfies the  identity
\begin{eqnarray}
\label{terme principal deviation cramer 0}
 \frac{x^2}{2} + n ( \psi ( \lambda ) - \lambda \mu_{\lambda })&=& \frac{x^3}{\sqrt{n}} \mathscr{L} \left( \frac{x}{\sqrt{n}} \right) ,
\end{eqnarray}
where $\mathscr{L} (t)$  is the Cram\'er's series defined by \eqref{Cram series};
\item[(ii)]
 the integral $I_1$ defined by \eqref{def-I1}  satisfies the property that there exist   some positive  constants $C_1, C_2 >0$ such that
\begin{equation}
\label{encadrement I}
C_1 \leq  \lambda \sigma_{\lambda} \sqrt{n} I_1 \leq  C_2 \; ;
\end{equation}
moreover, the integral $I_1$ admits the following asymptotic expansion:
\begin{equation}
\label{I1 developpement}
I_1 =  \exp \left( \frac{x^2}{2} \right) \left[ 1 -
\Phi \left( x \right) \right] \left( 1 + O \left( \frac{x}{\sqrt{n}} \right) \right).
\end{equation}
 \end{itemize}
\end{lemma}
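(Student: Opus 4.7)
\medskip
\noindent\textbf{Proof proposal.} Both parts of the lemma are essentially classical consequences of Cram\'er's method for sums of i.i.d.\ r.v.'s; my plan is to reduce them to a direct power-series manipulation (for (i)) and a closed-form integral evaluation combined with a Mills-ratio estimate (for (ii)).

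\medskip
For part (i), the plan is purely algebraic. Starting from $\psi(\lambda)=\sum_{k\geq 1}\gamma_k\lambda^k/k!$ and $\mu_\lambda=\psi'(\lambda)$, I compute
$$\psi(\lambda)-\lambda\mu_\lambda \;=\; \sum_{k\geq 1}\frac{\gamma_k}{k!}\lambda^k-\sum_{k\geq 1}\frac{\gamma_k}{(k-1)!}\lambda^k \;=\; -\sum_{k\geq 2}\frac{k-1}{k!}\gamma_k\lambda^k.$$
Next I invert the defining relation $\sigma t=\mu_\lambda-\mu=\sum_{k\geq 2}\gamma_k\lambda^{k-1}/(k-1)!$ with $t=x/\sqrt{n}$ to obtain the power series (\ref{lambda(x)}) for $\lambda(x)$ as a function of $t$. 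Substituting this series into $n(\psi(\lambda)-\lambda\mu_\lambda)$, multiplying out, and adding $x^2/2=nt^2/2$, I obtain a power series in $t$ in which the coefficients of $t^0, t^1, t^2$ vanish (the cancellation of $t^2$ is precisely what motivates the choice of $\lambda(x)$), while the coefficients of $t^3, t^4, t^5,\ldots$ match exactly those of $nt^3\mathscr{L}(t)$ as defined in (\ref{Cram series}). This is the standard computation found in Petrov; the main bookkeeping effort is to verify that the numerators $\gamma_3, \gamma_4\gamma_2-3\gamma_3^2, \gamma_5\gamma_2^2-10\gamma_4\gamma_3\gamma_2+15\gamma_3^3,\dots$ come out correctly, which is an elementary but tedious matching of coefficients.

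\medskip
For part (ii), set $u=\lambda\sigma_\lambda\sqrt{n}$. The first step is to evaluate $I_1$ in closed form. Integrating by parts in (\ref{def-I1}) (or equivalently differentiating the cumulative distribution inside the integral) gives
$$I_1 \;=\; \int_0^{\infty} e^{-uy}\phi(y)\,dy \;=\; e^{u^2/2}\bigl[1-\Phi(u)\bigr],$$
using $\phi(y)=\frac{1}{\sqrt{2\pi}}e^{-y^2/2}$ and completing the square. The two-sided bound (\ref{encadrement I}) then follows from the Mills' ratio asymptotic $1-\Phi(u)\sim\phi(u)/u$ as $u\to\infty$ (and continuity/boundedness on any compact range of $u$), which yields $u I_1 \sim 1/\sqrt{2\pi}$.

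\medskip
For the expansion (\ref{I1 developpement}) the plan is to compare $u$ with $x$. From (\ref{lambda(x)}) and the uniform bound (\ref{3 moments uniformes}) for $\sigma_\lambda$, a direct calculation gives $u=x+O(x^2/\sqrt{n})$. Writing $g(y):=e^{y^2/2}[1-\Phi(y)]$, a differentiation yields the Stein-type identity $g'(y)=yg(y)-1/\sqrt{2\pi}$; combined with the full Mills' expansion $yg(y)=\frac{1}{\sqrt{2\pi}}(1-y^{-2}+O(y^{-4}))$ this gives $g'(y)=O(y^{-2})$ for large $y$. The mean value theorem then yields
$$g(u)-g(x) \;=\; O\!\left(\frac{|u-x|}{x^2}\right) \;=\; O\!\left(\frac{1}{\sqrt{n}}\right) \;=\; g(x)\cdot O\!\left(\frac{x}{\sqrt{n}}\right),$$
since $g(x)\asymp 1/x$. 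This is exactly (\ref{I1 developpement}). The main obstacle I anticipate is keeping the error terms in (ii) simultaneously uniform in $x$ through the regime $1\leq x=o(\sqrt{n})$, which requires using the sharper second-order Mills' expansion rather than just the leading-order asymptotic; part (i) is otherwise only a bookkeeping exercise.
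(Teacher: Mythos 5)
Your proposal is correct. Note that the paper gives no proof of this lemma at all: it is stated as a collection of admitted classical facts with a reference to Petrov, and your argument supplies exactly the computation that citation points to — the power-series inversion and coefficient matching for (i), and for (ii) the closed-form evaluation $I_1=e^{u^2/2}\left[1-\Phi(u)\right]$ with $u=\lambda\sigma_\lambda\sqrt{n}$ followed by Mills-ratio estimates and the identity $g'(y)=yg(y)-1/\sqrt{2\pi}$. The only point worth making explicit is that the lower bound in \eqref{encadrement I} requires $u$ bounded away from $0$, which holds because $x\geq 1$ and $u=x\left(1+O\left(x/\sqrt{n}\right)\right)$; with that observation your treatment of both the bounded-$x$ and large-$x$ regimes goes through.
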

Now we can end the proof of Theorem \ref{cramer type theorem}.
By (\ref{borne J}), (\ref{encadrement I}) and (\ref{lambda(x)}),  we have
\begin{equation}
\label{I et I1 Cramer BPRE}
I=  I_1 \left(1 + O (\lambda) \right) = I_1 \left(1 + O \left( \frac{x}{\sqrt{n}} \right) \right). \nonumber
\end{equation}
Coming back to (\ref{changement mesure x J}) and using (\ref{I1 developpement}), we get
\begin{equation*}
\mathbb{P}\left( \frac{\log Z_{n}-n\mu }{\sigma \sqrt{n}}> x\right)
= \exp \left( \frac{x^2}{2} + n ( \psi ( \lambda) - \lambda \mu_\lambda)\right)(1- \Phi (x)) \left( 1+ O \left( \frac{x}{\sqrt{n}} \right) \right).
\end{equation*}%
Then, by (\ref{terme principal deviation cramer 0}), we obtain the desired  Cram\'er's large deviation expansion
\begin{equation}
\label{Cramer large deviation expansion BPRE}
\mathbb{P}\left( \frac{\log Z_{n}-n\mu }{\sigma \sqrt{n}}> x\right)
= \exp \left( \frac{x^3}{\sqrt{n}} \mathscr{L} \left( \frac{x}{\sqrt{n}} \right)  \right)(1- \Phi (x)) \left( 1+ O \left( \frac{x}{\sqrt{n}} \right) \right),
\end{equation}%
which ends the proof of the first assertion of Theorem \ref{cramer type theorem}.

\bibliographystyle{plain}
\bibliography{Bibliography_BPRE2}

\nocite{smith}

\nocite{athreya1971branching}

\nocite{athreya1971branching2}

\nocite{liu}

\nocite{hambly}

\nocite{huang_convergence_2014}

\nocite{bansaye2009large}

\nocite{boinghoff2010upper}

\nocite{bansaye2011upper}

\nocite{bansaye2013lower}

\nocite{kozlov2006large}

\nocite{bansaye2014small}

\nocite{petrov}

\nocite{cramer}

\nocite{afanasyev2012limit}

\end{document}